\documentclass{amsart}
\usepackage{amssymb,amsxtra}
\usepackage{graphicx}
\usepackage{amscd}
\usepackage{amsmath}
\usepackage{amsfonts}
\usepackage{amssymb}
\usepackage{upref}
\usepackage[all]{xy}
\usepackage{color}

\newtheorem{theorem}{Theorem}[section]
\newtheorem{proposition}[theorem]{Proposition}
\newtheorem{lemma}[theorem]{Lemma}
\newtheorem{corollary}[theorem]{Corollary}
\newtheorem{thm}[theorem]{Theorem}
\newtheorem{cor}[theorem]{Corollary}
\newtheorem{lem}[theorem]{Lemma}

\theoremstyle{definition}
\newtheorem{definition}[theorem]{Definition}
\newtheorem{defn}[theorem]{Definition}
\newtheorem{example}[theorem]{Example}

\newtheorem{remark}[theorem]{Remark}

\newtheorem{step}{Step}

\newcommand{\tht}{\theta}
\renewcommand{\bar}{\overline}
\newcommand{\<}{\langle}
\renewcommand{\>}{\rangle}
\newcommand{\id}{\text{\textup{id}}}

\renewcommand{\)}{\textup)}
\newcommand{\inv}{^{-1}}
\newcommand{\rg}{\text{rg}}
\newcommand{\midtext}[1]{\quad\text{#1}\quad}
\newcommand{\smtx}[1]{\left(\begin{smallmatrix} #1
\end{smallmatrix}\right)}
\DeclareMathOperator*{\spn}{span}
\DeclareMathOperator*{\clspn}{\overline{\spn}}
\DeclareMathOperator{\aut}{Aut}
\DeclareMathOperator{\supp}{supp}
\DeclareMathOperator{\fix}{Fix}
\DeclareMathOperator{\Orb}{Orb}
\newcommand{\R}{\mathbb R}
\newcommand{\Z}{\mathbb Z}
\newcommand{\C}{\mathbb C}
\newcommand{\N}{\mathbb N}
\newcommand{\T}{\mathbb T}
\newcommand{\KK}{\mathcal K}
\newcommand{\LL}{\mathcal L}

\newcommand{\HH}{\mathcal H}

\newcommand{\Chi}{\raisebox{2pt}{\ensuremath{\chi}}}
\newcommand{\minus}{\setminus}

\title[Group actions on topological graphs]{Group actions on topological graphs}
\author{Valentin Deaconu}
\address{Department of Mathematics\\ University of Nevada\\ Reno NV
89557-0084, USA}
\email[Valentin Deaconu]{vdeaconu@unr.edu}
\author{Alex Kumjian}
\address{Department of Mathematics\\ University of Nevada\\ Reno NV
89557-0084, USA}
\email[Alex Kumjian]{alex@unr.edu}
\author{John Quigg}
\address{School of Mathematical and Statistical Sciences\\Arizona State University\\Tempe AZ 85287-1804, USA}
\email[John Quigg]{quigg@asu.edu}

\subjclass{Primary 46L05; Secondary 46L55.}
\keywords{$C^*$-algebra, group action, topological graph}
\date{\today}
\begin{document}

\begin{abstract}

We define the action of a locally compact group $G$ on a topological graph $E$. This action induces a natural action of $G$ on the $C^*$-correspondence ${\mathcal H}(E)$ and on the graph $C^*$-algebra $C^*(E)$. If the action is free and proper, we prove that $C^*(E)\rtimes_r G$ is strongly Morita equivalent to $C^*(E/G)$. We define the skew product of a locally compact group $G$ by a topological graph $E$ via a cocycle $c:E^1\to G$. The group acts freely and properly on this new topological graph $E\times_cG$. If $G$ is abelian, there is a dual action on $C^*(E)$ such that $C^*(E)\rtimes \hat{G}\cong C^*(E\times_cG)$. We also define the fundamental group and the universal covering of a topological graph.

\end{abstract}

\maketitle

\section{Introduction}

Topological graphs  generalize discrete directed graphs.  They have a richer structure, since the vertex and edge 
spaces may be arbitrary locally compact spaces, and  it is natural to study their symmetries. 
The corresponding $C^*$-algebras  provide a wealth of  intriguing examples. Katsura proved in \cite{Ka4} that all 
Kirchberg algebras can be obtained from topological graphs.  

In \cite{KP} the authors consider free actions of discrete groups $G$ on directed graphs $E$ and on their associated $C^*$-algebras $C^*(E)$. They prove that the crossed product $C^*(E)\rtimes G$ is strongly Morita equivalent to $C^*(E/G)$, where $E/G$ is the quotient graph. They also consider the notion of skew product graph associated to a cocycle $c:E^1\to G$ and the universal covering tree of a graph.

In this paper, we prove an analogous result for the reduced crossed product for $G$ a locally 
compact group acting freely and properly on a topological graph $E$.  Unlike \cite{KP}, where 
groupoids were used extensively, the $C^*$-algebra $C^*(E)$ is defined here using a $C^*$-
correspondence $\HH(E)$ and the Cuntz-Pimsner construction (see \cite{Ka1}). The main ingredients of the 
proof are a structure theorem of Palais about principal $G$-bundles, the generalized fixed point algebras of Rieffel, and the use of multiplier bimodules introduced by Echterhoff, Raeburn, and others. We need multiplier bimodules because we must construct a homomorphism of $C^*(E/G)$ into the multiplier algebra $M(C^*(E))$, and as an intermediate step we map the correspondence $\HH(E/G)$ into the multipliers of the correspondence $\HH(E)$.

We also define the notion of fundamental group of a topological graph and the universal covering, using a space $R(E)$, called the geometric realization, which is a kind of double mapping torus. In the discrete case, the geometric realization is the usual $1$-dimensional CW-complex that we associate with
a graph, and the fundamental group is free. For a topological graph $E$, the space $R(E)$ could be a higher dimensional CW-complex, and $\pi_1(E)$ is not necessarily free, as we will see in our examples.

Our paper is organized as follows. In section~2 we  recall the notion of topological graph and the construction of its $C^*$-algebra as a Cuntz-Pimsner algebra, giving several examples. In section~3 we define group actions on topological graphs, and prove a structure theorem about topological graphs on which a locally compact group acts freely and properly.  The fundamental group of a topological graph and the corresponding universal covering are defined in section~4; we also consider a number of examples. In section~5 we discuss group actions on $C^*$-correspondences, recall the notion of generalized fixed point algebra, and prove the main theorem stating that $C^*(E)\rtimes_rG$ and $C^*(E/G)$ are strongly Morita equivalent for a free and proper action. The appendix contains basic results concerning multipliers of correspondences which are needed in
section~5.

The authors would like to thank the referee for a number of helpful comments.

\section{Topological graphs and their $C^*$-algebras}

Let $E=(E^0,E^1,s,r)$ be a  topological graph. Recall that $E^0$ and $E^1$ are locally compact (Hausdorff) spaces, and that $s,r:E^1\to E^0$ are continuous maps such that $s$ is a local homeomorphism. We think of points in $E^0$ as vertices, and of points  $e\in E^1$ as edges from $s(e)$ to $r(e)$. 
A {\em path} of length $n$ in $E=(E^0,E^1,s,r)$ is a sequence $e_1e_2\cdots e_n$ of edges such that $s(e_i)=r(e_{i+1})$ for all $i$. We denote by $E^n$ the set of paths of length $n$. The space of infinite paths is denoted $E^\infty$. The maps $s$ and $r$ extend naturally to $E^n$, and $(E^0, E^n, s,r)$ becomes a topological graph. A vertex $v\in E^0$ is viewed as a path of length $0$. 

We define two open subsets $E^0_{\text{sce}}, E^0_{\text{fin}}$ of $E^0$ by
\[E^0_{\text{sce}}=\{v\in E^0: v \;\;\text{has a neighborhood}\;\; V \;\;\text{such that}\;\; r^{-1}(V)=\emptyset\},\]
\[E^0_{\text{fin}}=\{v\in E^0: v \;\;\text{has a neighborhood}\;\; V \;\; \text{such that}\;\; r^{-1}(V)\;\; \text{is compact}\},\]
and the set of regular vertices by $E^0_{\rg}:=E^0_{\text{fin}}\setminus\overline{E^0_{\text{sce}}}$. 
It is proved in Proposition~2.8 of \cite{Ka1} that $v\in E^0_{\text{rg}}$ if and only if $v$ has a neighborhood such that $r^{-1}(V)$ is compact and $r(r^{-1}(V))=V$. In particular, if $r$ is proper and surjective, we have $E^0_{\text{rg}}=E^0$.


The  $C^*$-algebra $C^*(E)$ of a topological graph is defined as the Cuntz-Pimsner algebra ${\mathcal O}_{\mathcal H}$ of the $C^*$-correspondence ${\mathcal H}={\mathcal H}(E)$ over the $C^*$-algebra $A=C_0(E^0)$, which is obtained as a completion of $C_c(E^1)$ using the inner product 
\[\langle \xi,\eta\rangle(v)=\sum_{s(e)=v}\overline{\xi(e)}\eta(e),\; \xi,\eta\in C_c(E^1)\]
and the multiplications
\[(\xi\cdot f)(e)=\xi(e)f(s(e)),\; (f\cdot\xi)(e)=f(r(e))\xi(e).\] 
The Hilbert $C_0(E^0)$-module ${\mathcal H}$ can be identified with $\{\xi\in C_0(E^1)\mid \langle \xi, \xi\rangle \in C_0(E^0)\}$.
There exists an injective $*$-homomorphism $\pi_E:C_b(E^1)\to {\mathcal L}({\mathcal H})$ given by $(\pi_E(f)\xi)(e)=f(e)\xi(e)$ such that $\pi_E(f)\in {\mathcal K}({\mathcal H})$ if and only if $f\in C_0(E^1)$. 

For more details, see \cite{Ka1}.

Recall that a {\em Toeplitz representation} of a $C^*$-correspondence ${\mathcal H}$ over $A$ in a $C^*$-algebra $C$ is a pair $(\tau,\pi)$ with $\tau:{\mathcal H}\rightarrow C$ a linear map and $\pi:A\rightarrow C$ a $*$-homomorphism, such that 
\[\tau(\xi a)=\tau(\xi)\pi(a), \;\tau(\xi)^*\tau(\eta)=\pi(\langle \xi,\eta\rangle), \;\tau(\varphi(a)\xi)=\pi(a)\tau(\xi),\] where $\varphi:A\rightarrow {\mathcal L}({\mathcal H})$ is the left multiplication.
The corresponding universal $C^*$-algebra is called the Toeplitz algebra  ${\mathcal T}_{\mathcal H}$.
A representation $(\tau,\pi)$ is {\em  covariant} if $\pi(a)=\tau^{(1)}(\varphi(a))$ for all $a$ in the ideal $\varphi^{-1}({\mathcal K}({\mathcal H}))\cap (\ker \varphi)^\perp,$  where $\tau^{(1)}:{\mathcal K}({\mathcal H})\rightarrow C$ is such that $\tau^{(1)}(\theta_{\xi,\eta})=\tau(\xi)\tau(\eta)^*.$
The Cuntz-Pimsner algebra ${\mathcal O}_{\mathcal H}$ is universal with respect to covariant representations, and it is a quotient of ${\mathcal T}_{\mathcal H}$. 
Let $(k_{\mathcal H},k_A)$ denote the canonical Toeplitz representation of the $A$-correspondence 
$\mathcal H$ in ${\mathcal O}_{\mathcal H}$.
A covariant representation $(\tau, \pi)$ of ${\mathcal H}$ in a $C^*$-algebra $C$ gives rise to a $*$-homomorphism $\tau\times\pi:{\mathcal O}_{\mathcal H}\to C$ such that
$\tau = (\tau\times\pi) \circ k_{\mathcal H}$ and $\pi =  (\tau\times\pi) \circ k_A$.

\remark
\label{regular}
 In the case of a $C^*$-correspondence ${\mathcal H}(E)$ associated with a topological graph as above, it is proved in Proposition~1.24 of \cite{Ka1} that $\ker \varphi=C_0(E^0_{\text{sce}})$ and that $\varphi^{-1}({\mathcal K}(H))=C_0(E^0_{\text{fin}})$, therefore the ideal $\varphi^{-1}({\mathcal K}({\mathcal H}))\cap (\ker \varphi)^\perp$ coincides with $C_0(E^0_{\rg})$.

\example If the vertex space $E^0$ is discrete, then the edge space $E^1$ is also discrete, and $E=(E^0, E^1, s,r)$ is an usual (discrete) graph. 
For a discrete graph $E$, its $C^*$-algebra $C^*(E)$ was initially defined as the universal $C^*$-algebra generated by mutually orthogonal projections $p_v$ for $v\in E^0$ and partial isometries $t_e$ for $e\in E^1$ with orthogonal ranges such that $t_e^*t_e=p_{s(e)}, \;t_et_e^*\leq p_{r(e)}$ and 
\begin{equation}\label{ck}
p_v=\sum_{r(e)=v}t_et_e^*\;\mbox{if}\;\;0<|r^{-1}(v)|<\infty.
\end{equation}
If ${\mathcal H}={\mathcal H}(E)$ is the associated $C^*$-correspondence described above, then
\[\varphi^{-1}({\mathcal K}(H))=C_0(\{v\in E^0: |r^{-1}(v)|<\infty\}),\]\[\ker(\varphi)=C_0(\{v\in E^0: |r^{-1}(v)|=0\}),\]
since $E^0_{\text{fin}}=\{v\in E^0: r^{-1}(v)\;\; \text{is finite}\}$ and $E^0_{\text{sce}}=\{v\in E^0:r^{-1}(v)=\emptyset\}$ is the set of sources.
The maps
\[\pi(f)=\sum_{v\in E^0}f(v)p_v, \;f\in C_0(E^0),\quad
\tau(\xi)=\sum_{e\in E^1}\xi(e)t_e, \;\xi\in C_c(E^1)\] 
give  a Toeplitz representation of $\mathcal H$ into $C^*(E)$ if and only if $t_e^*t_e=p_{s(e)}$ and  $t_et_e^*\leq p_{r(e)}$, which is covariant if and only if (\ref{ck}) is satisfied. 
Hence, the $C^*$-algebra of a discrete graph $E$, defined using generators and relations, is isomorphic to the Cuntz-Pimsner algebra ${\mathcal O}_{\mathcal H}$ (see
\cite[Proposition~12]{FLR}) and \cite[Example~1]{Ka1}).

\example  Let $E^0=E^1={\mathbb T}, s(z)=z$, and  $r(z)=e^{2\pi i\theta}z$ for $\theta\in[0,1]$ irrational. Then $C^*(E)\cong A_{\theta}$, the irrational rotation algebra. More generally, let $E^0=E^1=X$, where $X$ is a locally compact metric space, let $s=\id$ and let $r=h$ for $h:X\to X$ a homeomorphism. Then $C^*(E)\cong C_0(X)\rtimes{\mathbb Z}$, since $C^*(E)$ is the universal $C^*$-algebra generated by a copy of $C_0(X)$ and a unitary $u$ satisfying $\hat{h}(f)=u^*fu$ for $f\in C_0(X)$, where $\hat{h}(f)=f\circ h$.

\example Let $n\in{\mathbb N}\setminus\{0\}$ and let $m\in {\mathbb Z}\setminus\{0\}$. Take $E^0=E^1={\mathbb T}, s(z)=z^n, r(z)=z^m$. We get a topological graph with both $s$ and $r$ local homeomorphisms. When $m\notin n{\mathbb Z}$, $C^*(E)$ is simple and purely infinite, see \cite[Example~A.6]{Ka4}.   

\example \label{cayley}
({\em The Cayley graph of a finitely generated locally compact group}).
A locally compact group $G$ is called finitely generated if there is a finite subset $S$ of $G$  such that the closure of the subgroup generated by $S$ is all $G$.
Given a set of generators $S=\{h_1,h_2,...,h_n\}$,  define the associated Cayley graph 
$E=E(G,S)$ with $E^0=G, E^1=S\times G, s(h,g)=g$, and $r(h,g)=gh$. Note that $E(G,S)$ becomes a topological graph and that both $s$ and $r$ are local homeomorphisms. For $G$ discrete and finitely generated, we recover the usual notion of Cayley graph. The Cayley graph may change if we change the set of generators.

For $G=({\mathbb R}, +)$ and $S=\{1,\theta\}$, where $\theta$ is irrational, the corresponding Cayley graph $E$ has  $E^0={\mathbb R}$, 
$E^1=\{1,\theta\}\times{\mathbb R}$, $s(1,x)=x, r(1,x)=x+1, s(\theta, x)=x$, and
$r(\theta, x)=x+\theta$.  
Its $C^*$-algebra is simple and, if  $\theta<0$,  it is purely infinite; moreover, 
$C^*(E) \cong {\mathcal O}_2\rtimes_{\alpha}{\mathbb R}$, where $\alpha_t(S_1)=e^{it}S_1$,
$\alpha_t(S_2)=e^{it\theta}S_2$ for $t\in {\mathbb R}$ and for the standard generators $S_1,S_2$ of the Cuntz algebra ${\mathcal O}_2$ (see Theorem~2, section~3 in \cite{KK} and  \cite[Proposition 4.3]{Ka5}).

The above example should be regarded as a special case of the following.

\example ({\em Skew products of topological graphs}).
Let $E=(E^0,E^1,s,r)$ be a  topological graph, let $G$ be a locally compact group, and let $c:E^1\to G$ be a continuous function. We
call $c$ a \emph{cocycle}, and define the {\em skew product graph}
$E\times_cG=(E^0\times G, E^1\times G, \tilde s, \tilde r)$ (cf.\ \cite{KP,R}), where
\[\tilde s(e,g)=(s(e),g), \;\; \tilde r(e, g)=(r(e), gc(e)).\]
Notice that $E\times_cG$ becomes a topological graph using the product topology, since $\tilde s$ is a local homeomorphism and $\tilde r$ is continuous. The situation of a graph with one vertex and $n$ loops $\{e_1,...,e_n\}$ and a set of generators $S=\{h_1,...,h_n\}$ of a group $G$ such that $c(e_i)=h_i, i=1,...,n$ gives the Cayley graph $E(G,S)$ as a skew product.

In particular, if $E$ is the graph with one vertex and two edges $\{e,f\}$ and $G=({\mathbb R},+)$, the map $c:\{e,f\}\to {\mathbb R},\; c(e)=1, c(f)=\theta$ for $\theta$ irrational determines the topological graph described in the above example. 

If $E$ is an arbitrary discrete graph and $G={\mathbb Z}$ with $c(e)=1$ for all $e\in E^1$, 
then $E\times_cG$ is the product graph $E\times Z$, where $Z^0=Z^1={\mathbb Z}$, 
with $s(k)=k, r(k)=k+1$. There is an isomorphism 
$C^*(E\times_c G)\cong C^*(E)\rtimes{\mathbb T}$ (see \cite[Proposition 2.6]{KP}).

\section{Group actions on topological graphs}

\definition Let $E, F$ be two topological graphs. A graph morphism $\varphi:E\to F$ is a pair of continuous  maps $\varphi=(\varphi^0,\varphi^1)$ where $\varphi^i:E^i\to F^i, i=0,1$ such that $\varphi^0\circ r=r\circ \varphi^1$ and $\varphi^0\circ s=s\circ \varphi^1$, i.e. the diagram
\[\begin{CD}E^0@<s<<E^1@>r>>E^0\\@V\varphi^0VV@V\varphi^1VV@V\varphi^0VV\\F^0@<s<<F^1@>r>>F^0\end{CD}\]
is commutative.

A graph morphism has the unique path lifting property for $s$ if $\varphi^0, \varphi^1$ are surjective and for every $v\in E^0$ and every $f\in F^1$ with $s(f)=\varphi^0(v)$ there is a unique $e\in E^1$ such that $\varphi^1(e)=f$ and $s(e)=v$.  A graph morphism $\varphi$ is a {\em graph covering} if both $\varphi^0, \varphi^1$ are covering maps. In that case, $\varphi$ has the unique path lifting property with respect to both $s$ and $r$.

An isomorphism of topological graphs is a graph morphism $\varphi=(\varphi^0,\varphi^1)$ such that $\varphi^i$ is
a homeomorphism for $i=0,1$. It follows that $\varphi^{-1}=((\varphi^0)^{-1},(\varphi^1)^{-1})$ is also a graph morphism. We denote by $\aut(E)$ the group of automorphisms of a topological graph $E$.    

A locally compact group $G$ acts on $E$ if there are continuous maps $\lambda^i: G\times E^i\to E^i$, and we write $\lambda^i(g, x)= \lambda^i_g(x)$ for $i=0,1$, such that  $g\mapsto \lambda_g$ is a group homomorphism from $G$ into $\aut(E)$. The action $\lambda$ is called {\em free} if $\lambda^0_g(v)=v$ for some $v\in E^0$ implies $g=1_G$. Note that in this case the action of $G$ is also free on the edges of $E$. 
The action is called {\em proper} if the maps
$G\times E^0\to E^0\times E^0, (g,v)\mapsto (\lambda^0_g(v),v)$ and $G\times E^1\to E^1\times E^1, (g,e)\mapsto (\lambda^1_g(e),e)$ are proper.
In fact,  by Proposition~2.1.14 in \cite{AR}, it is  sufficient to require properness of the first map.
We will frequently write $g\cdot e$ for $\lambda^1_g(e)$, and similarly for the action on vertices.

\begin{defn} Let $P,X$ be locally compact Hausdorff spaces, and let $G$ be a locally compact group. A map $P\stackrel {q}{\longrightarrow} X$ is called a {\em  principal $G$-bundle} if there is a
free and proper action of $G$ on $P$ and $q$ induces an identification
of $P/G$ with $X$.  
\end{defn}

Note that  we make no assumption regarding local triviality. 
Principal bundles are thoroughly discussed in \cite{P} and \cite{husemoller} 
using various definitions.
By \cite[Remark~4.64]{RW} the notion of  principal
$G$-bundle in \cite{husemoller} is equivalent to our notion 
determined by a free and proper action.

Given a  principal $G$-bundle $P\stackrel {q}{\longrightarrow} X$ and a
continuous function $f: Y \to X$, we may view the pull-back $f^*(P) =
\{ (y, p) \in Y\times P  : f(y) = q(p) \}$ as a   principal $G$-bundle
$f^*(P)\stackrel {\pi_1}{\longrightarrow} Y$ where the action of $G$
is given by $g \cdot (y, p)  = (y, g \cdot p)$. 

The following result may be found in \cite[Theorem~4.4.2]{husemoller}
(see also  the discussion preceding  \cite[Proposition~1.3.4]{P}).

\begin{lem}
\label{husemoller}
Suppose we are given  principal $G$-bundles $P_i\stackrel {q_i}{\longrightarrow} X_i$ 
for $i = 1, 2$ and a  pair of continuous maps $f : X_1 \to X_2$, 
$\tilde f : P_1 \to P_2$ such that $\tilde f$ is equivariant and $q_2 
\tilde f = fq_1$. Then $\tht_f : P_1 \to f^*(P_2)$ given by $\tht_f
(p) = (q_1(p), \tilde f(p))$ is an isomorphism of 
$G$-bundles.
\end{lem}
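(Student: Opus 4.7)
The plan is to verify in turn that $\tht_f$ is well-defined, continuous, $G$-equivariant, and bijective, reserving the main effort for continuity of $\tht_f\inv$.

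For the routine properties: well-definedness of $\tht_f$ as a map into $f^*(P_2)$ is exactly the hypothesis $q_2\tilde f=fq_1$ evaluated at $p$; continuity is immediate from continuity of $q_1$ and $\tilde f$ together with the subspace topology on $f^*(P_2)\subset X_1\times P_2$; and equivariance uses $G$-invariance of $q_1$ together with $G$-equivariance of $\tilde f$. For injectivity, $\tht_f(p)=\tht_f(p')$ gives $q_1(p)=q_1(p')$, hence $p'=g\cdot p$ for some $g\in G$ by freeness of the action on $P_1$; then $\tilde f(p)=g\cdot\tilde f(p)$ together with freeness on $P_2$ forces $g=1_G$. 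For surjectivity, given $(y,p_2)\in f^*(P_2)$ I would pick any $p_1\in q_1\inv(y)$; then $q_2(\tilde f(p_1))=f(y)=q_2(p_2)$ puts $\tilde f(p_1)$ in the same $G$-orbit as $p_2$, so $p_2=\tilde f(g\cdot p_1)$ for some $g\in G$, and $\tht_f(g\cdot p_1)=(y,p_2)$.

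The main obstacle is continuity of $\tht_f\inv$. Here I would invoke the structure theorem of Palais mentioned in the introduction: a free and proper action of a locally compact group on a locally compact Hausdorff space makes the quotient map a principal $G$-bundle admitting continuous local sections. Applying this to $q_1:P_1\to X_1$ and to the projection $\pi_1:f^*(P_2)\to X_1$ (which is a principal bundle for the action $g\cdot(y,p)=(y,g\cdot p)$, clearly free and proper), over a common open set $U\subset X_1$ one obtains continuous sections $\sigma$ of $q_1$ and $\tau$ of $\pi_1$, yielding $G$-equivariant trivializations $U\times G\to q_1\inv(U)$ and $U\times G\to\pi_1\inv(U)$ via $(y,g)\mapsto g\cdot\sigma(y)$ and $(y,g)\mapsto g\cdot\tau(y)$. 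Under these, $G$-equivariance forces $\tht_f$ to have local form $(y,g)\mapsto(y,g\cdot h(y))$ for some continuous $h:U\to G$, and its inverse $(y,g)\mapsto(y,g\cdot h(y)\inv)$ is then manifestly continuous. Since $X_1$ is covered by such $U$, $\tht_f\inv$ is continuous, completing the identification of $\tht_f$ as an isomorphism of principal $G$-bundles.
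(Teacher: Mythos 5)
The paper does not prove this lemma; it cites \cite[Theorem~4.4.2]{husemoller} and the discussion preceding \cite[Proposition~1.3.4]{P}. Your verification of the routine points (well-definedness from $q_2\tilde f=fq_1$, continuity into the subspace $f^*(P_2)\subset X_1\times P_2$, equivariance, and bijectivity via freeness and transitivity of $G$ on fibres) is correct and is essentially what any proof must contain.

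The gap is in your argument for continuity of $\tht_f\inv$. You invoke ``the structure theorem of Palais'' to produce continuous local sections of $q_1$ and of $\pi_1:f^*(P_2)\to X_1$. Palais's slice theorem yields local cross-sections for free proper actions of \emph{Lie} groups; for an arbitrary locally compact group $G$ a free and proper action need not make the quotient map locally trivial, and the paper explicitly warns of this (``Note that we make no assumption regarding local triviality''). The reference to Palais in the introduction is to this very lemma, not to a slice theorem. So your local trivializations $U\times G\cong q_1\inv(U)$ are not available in the stated generality, and the argument collapses at its main step. The ingredient that actually does the work here is the continuity of the translation function $\tau_2:P_2\times_{X_2}P_2\to G$ (the unique $g$ with $p'=g\cdot p$), which is equivalent to properness of a free action by \cite[Remark~4.64]{RW} and is Husemoller's definition of principal bundle. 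One then sets, for $(y,p_2)\in f^*(P_2)$ and any $p\in q_1\inv(y)$, $\tht_f\inv(y,p_2)=\tau_2(\tilde f(p),p_2)\cdot p$; this is independent of the choice of $p$ by equivariance, and its continuity follows because the continuous map $(p,(y,p_2))\mapsto \tau_2(\tilde f(p),p_2)\cdot p$ on $P_1\times_{X_1}f^*(P_2)$ factors through the projection onto $f^*(P_2)$, which is an open surjection (being itself a principal $G$-bundle projection) and hence a quotient map. Note that even if you grant yourself local sections, your continuity of the transition function $h:U\to G$ still secretly uses continuity of $\tau_2$, so the detour through local triviality buys nothing and costs the generality the paper needs.
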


The following is a structure theorem for 
topological graphs equipped with a
free and proper action of
a locally compact group:
the quotient is a topological graph,
and moreover the original graph can be reconstructed
from the quotient graph and
certain additional data.

\begin{thm}
\label{structure}
Let $(F^0, F^1, s, r)$ be a topological graph and $G$ be a locally compact group.  
    Given a  principal $G$-bundle $P\stackrel {q}{\longrightarrow}F^0$ 
    and an isomorphism of pull-backs $\tht : s^*(P) \cong r^*(P)$,
     we may construct a topological graph
     $(E^0, E^1, \tilde s, \tilde r)$ with a free and proper action of $G$ by setting
     $E^0 := P$ and $E^1 :=  s^*(P)$ \(with structure maps defined in the obvious way\).
Conversely, every  topological graph $(E^0, E^1, \tilde s,\tilde r)$ on which $G$ acts freely 
    and properly is isomorphic to one arising in this way.
\end{thm}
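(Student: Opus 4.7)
The plan is to prove both directions of the theorem separately, with the forward direction essentially formal and the substance concentrated in the converse, where Lemma~\ref{husemoller} supplies the cocycle data $\tht$.

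For the forward construction I would take $E^0:=P$ with its given free and proper $G$-action and $E^1:=s^*(P)=\{(f,p)\in F^1\times P:s(f)=q(p)\}$ with the $G$-action $g\cdot(f,p):=(f,g\cdot p)$. Since $s^*(P)$ is closed in $F^1\times P$ and $G$ acts only on the second coordinate, freeness and properness of the action on $E^1$ are inherited from $P$. Define $\tilde s(f,p):=p$, which is a local homeomorphism because pulling a local homeomorphism back along any continuous map is again a local homeomorphism, and $\tilde r:=\text{pr}_2\circ\tht$; both maps are $G$-equivariant because $\tht$ is an isomorphism of $G$-bundles over $F^1$.

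For the converse I would set $P:=E^0$, $F^0:=E^0/G$, $F^1:=E^1/G$ with quotient maps $q^0,q^1$, and observe that the equivariant maps $\tilde s,\tilde r$ descend to continuous maps $s,r:F^1\to F^0$. The critical step is to show that $s$ is a local homeomorphism, so that $(F^0,F^1,s,r)$ is indeed a topological graph. Given $e\in E^1$, using freeness and properness of the $G$-actions on $E^1$ and $E^0$, I would choose an open neighborhood $U$ of $e$ small enough that $\tilde s|_U:U\to V:=\tilde s(U)$ is a homeomorphism and that $gU\cap U=\emptyset$ and $gV\cap V=\emptyset$ for all $g\neq 1_G$. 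Then $q^1|_U$ and $q^0|_V$ are open continuous injections, hence homeomorphisms onto the open sets $q^1(U)$ and $q^0(V)$, and the identity $s\circ q^1=q^0\circ\tilde s$ exhibits $s|_{q^1(U)}$ as a composition of three homeomorphisms. With $(F^0,F^1,s,r)$ established as a topological graph, I would apply Lemma~\ref{husemoller} twice, to the equivariant pairs $(\tilde s,s)$ and $(\tilde r,r)$ of bundle maps $E^1\to P$ covering $F^1\to F^0$, to produce $G$-bundle isomorphisms $\tht_{\tilde s}:E^1\to s^*(P)$ and $\tht_{\tilde r}:E^1\to r^*(P)$ over $F^1$, and then set $\tht:=\tht_{\tilde r}\circ\tht_{\tilde s}^{-1}$. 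A short diagram chase shows that the forward construction applied to these data $(F,P,\tht)$ reproduces $(E^0,E^1,\tilde s,\tilde r)$ up to $G$-equivariant graph isomorphism, with $\tht_{\tilde s}$ realizing the explicit isomorphism.

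The main obstacle I anticipate is exactly this verification that $s$ is a local homeomorphism, since the quotient maps $q^0,q^1$ are generally not themselves local homeomorphisms when $G$ is non-discrete (already $\R\to\text{point}$ illustrates this). The argument therefore cannot proceed by naively pushing $\tilde s$ to the quotient; the joint use of slice-type neighborhoods in both $E^0$ and $E^1$ where freeness and properness force injectivity of the quotient maps, combined with the openness of quotient maps for group actions, is what allows the local-homeomorphism property to descend.
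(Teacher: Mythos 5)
Your forward direction and your overall architecture for the converse (form the quotient graph, apply Lemma~\ref{husemoller} to the pairs $(\tilde s,s)$ and $(\tilde r,r)$ to get $\theta_s,\theta_r$, and set $\theta=\theta_r\circ\theta_s^{-1}$) agree with the paper. But the step you correctly identify as the crux --- local injectivity of $s:F^1\to F^0$ --- is where your argument breaks down. You propose to choose an open neighborhood $U$ of $e$ in $E^1$ with $gU\cap U=\emptyset$ for all $g\neq 1_G$ (and similarly $V$ in $E^0$), so that $q^1|_U$ and $q^0|_V$ are injective. Such open sets exist only when the action is properly discontinuous, i.e.\ essentially only for discrete $G$; for a non-discrete group a free and proper action never admits them (take $G=\R$ acting on itself by translation, or the Cayley graph of $\R$ with $E^1=\{1,\theta\}\times\R$ and $q^1$ mapping onto the two-point set $F^1$: no nonempty open $U$ makes $q^1|_U$ injective). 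Your own closing remark that $q^0,q^1$ are generally not local homeomorphisms when $G$ is non-discrete contradicts the claim that they restrict to open continuous injections on suitable open sets --- an open continuous injection onto an open set \emph{is} a local homeomorphism. Slices in the sense of Palais do exist for free proper actions, but they are local cross-sections, not open subsets, so they cannot play the role you assign to $U$ and $V$.

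The paper circumvents this by first using Lemma~\ref{husemoller} to identify $E^1$ with $s^*(E^0)\subset F^1\times E^0$, with $\tilde s$ corresponding to $\pi_2$. Local injectivity of $\pi_2$ at $(e,u)$ can then be witnessed on a basic box $(U\times V)\cap s^*(E^0)$ with $U$ open in the \emph{quotient} space $F^1$ and $V$ open in $E^0$; after shrinking $U$ so that $s(U)\subset q^0(V)$, any two edges $e_1,e_2\in U$ with $s(e_1)=s(e_2)=v'$ lift to $(e_1,u'),(e_2,u')$ for a single $u'\in V$ over $v'$, and injectivity of $\pi_2$ on the box forces $e_1=e_2$. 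No injectivity of the quotient maps on open sets is needed. If you want to keep your outline, replace your slice argument with this one (openness of $s$, which you also need, follows from $s\circ q^1=q^0\circ\tilde s$ with $q^1$ an open surjection and $q^0\circ\tilde s$ open).
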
     
\begin{proof}
First suppose we are given $P$, $q$, and $\theta$.
Define $\tilde s : s^*(P) \to P$ by $\tilde s = \pi_2$, i.e., $\tilde s(e,v)=v$, and 
$\tilde r : s^*(P) \to P$ by $\tilde r = \pi_2 \circ \tht$.  To see that $\tilde s$ is a local homeomorphism, fix a point
in $\tilde e = (e, v)  \in  s^*(P)$. Since $s$ is a local homeomorphism, there is an open set
$U \subset F^1$ containing $e$ such that the restriction $s|_U$ is injective.
It follows that the restriction $\tilde s|_{\pi_1^{-1}(U)}$ is also injective, and hence that
$\tilde s$ is a local homeomorphism.

The following commutative diagram illustrates the above construction:
\[
\xymatrix{
&r^*(P) \ar[dl]_{\pi_2} \ar[ddr]_{\pi_1}
&&E^1=s^*(P) \ar[dr]^{\tilde s=\pi_2} \ar[ddl]^{\pi_1}
\ar[ll]_(.7)\theta^(.7)\cong
\ar[dlll]|!{[ll];[ddl]}\hole^(.3){\tilde r}
\\
E^0=P \ar[d]_q
&&&&E^0=P \ar[d]^q
\\
F^0
&&F^1 \ar[ll]^r \ar[rr]_s
&&F^0
}
\]
Since $\pi_1\circ\theta=\pi_1$,
$\theta$ has the following form:
\[
\theta(e,v)=(e,\tilde r(e,v))\midtext{for}(e,v)\in s^*(P).
\]

The converse essentially follows from Lemma~\ref{husemoller} above;
the only non-obvious bit is showing that the quotient is a topological graph.
So, suppose $G$ acts freely and properly on a topological graph $E=(E^0, E^1, \tilde s,\tilde r)$.
Let $F^i=E^i/G$ be the quotient spaces, with quotient maps $q^i:E^i\to F^i$, for $i=0,1$. Since the $G$-action on $E$ is compatible with the range and source maps, we have the following diagram of  principal $G$-bundles:
\[
\xymatrix{
E^0 \ar[d]_{q^0}
&E^1 \ar[l]_{\tilde r} \ar[r]^{\tilde s} \ar[d]_{q^1}
&E^0 \ar[d]^{q^0}
\\
F^0
&F^1 \ar[l]^r \ar[r]_s
&F^0,
}
\]
where 
$r,s:F^1\to F^0$ are continuous, and $s$ is open.
We must show that $F=(F^0,F^1,s,r)$ is a topological graph, and that there is an isomorphism of pull-backs $\theta:s^*(E^0)\to r^*(E^0)$ such that $E$ is isomorphic to the topological graph with the same vertex space $E^0$, but with edge space $s^*(E^0)$, source map $\pi_2$, and range map $\pi_2\circ\theta$.
By Lemma~\ref{husemoller} there are $G$-bundle isomorphisms $\theta_r,\theta_s$ making the following diagram commute (without the top arrow $\theta$):
\[
\xymatrix{
&r^*(E^0) \ar[dl]_{\pi_2}
\ar@/_1pc/[ddr]|!{[dr];[dl]}\hole_(.6){\pi_1}
&&s^*(E^0) \ar[dr]^{\pi_2} \ar@{-->}[ll]_\theta^\cong
\ar@/^1pc/[ddl]|!{[dl];[dr]}\hole^(.6){\pi_1}
\\
E^0 \ar[d]_{q^0}
&&E^1 \ar[ll]_(.7){\tilde r} \ar[rr]^(.7){\tilde s} \ar[d]_(.4){q^1}
\ar[ul]_(.4){\theta_r}^(.4)\cong
\ar[ur]^(.4){\theta_s}_(.4)\cong
&&E^0 \ar[d]^{q^0}
\\
F^0
&&F^1 \ar[ll]^r \ar[rr]_s
&&F^0.
}
\]
Of course we define the $G$-bundle isomorphism $\theta$ so that the top triangle (and hence the entire diagram) commutes.

It only remains to show that $s:F^1\to F^0$ is locally injective.
Let $e\in F^1$, and put $v=s(e)$.
Fix $u\in E^0$ with $q^0(u)=v$,
so that $(e,u)\in s^*(E^0)$.
Since 
$E$ is a topological graph,
we can find open neighborhoods $U$ of $e$ in $F^1$ and $V$ of $u$ in $E^0$ such that $\pi_2:s^*(E^0)\to E^0$ is injective on
\[
(U\times V)\cap s^*(E^0).
\]
Since $q^0:E^0\to F^0$ is open, we can shrink $U$ if necessary so that $s(U)\subset q^0(V)$.

Let $e_1,e_2\in U$, and assume that
\[
s(e_1)=s(e_2)=v'.
\]
Choose $u'\in V$ such that $q^0(u')=v'$.
Then $(e_1,u'),(e_2,u')\in (U\times V)\cap s^*(E^0)$, so we must have $e_1=e_2$.
\end{proof}

\example If we are given a finitely generated locally compact group $G$ with generating set $S$,  then $G$ acts freely and properly on its Cayley graph $E=E(G,S)$  by $\lambda^0_g(g')=gg'$ and  $\lambda^1_g (h,g')=(h, gg')$. The quotient graph $E/G$ has $|S|$ loops and one vertex.

\example If $E=(E^0,E^1,s,r)$ is a  topological graph, $G$ is a locally compact group, and $c:E^1\to G$ is a continuous function, then   $\varphi: E\times_cG\to E,\; \varphi(g,x)=x$ is a   principal $G$-bundle map. 
 Indeed, $G$ acts freely and properly on $E\times_cG$ by $\lambda^0_g (v,h)=(v, gh)$ and
$\lambda^1_g (e,h)=(e, gh)$. The source and range maps are equivariant, and the quotient graph is isomorphic to $E$. 

The following result characterizes skew products:


\begin{cor}
\label{trivial}
The topological graph constructed in Theorem~\ref{structure} from an isomorphism  $\tht : s^*(P) \cong r^*(P)$ is
$G$-equivariantly
isomorphic to a skew product $F\times_c G$ if and only if 
the  $G$-bundle $P$ over $F^0$
is trivial.
\end{cor}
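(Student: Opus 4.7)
The plan is to translate between trivializations of $P$ and continuous cocycles $c : F^1\to G$: for the ``if'' direction a trivialization of $P$ will convert the isomorphism $\theta$ into a cocycle, and for the ``only if'' direction the vertex-space part of a skew-product isomorphism will produce a trivialization of $P$.

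Assume first that $P$ is trivial, and pick a $G$-bundle isomorphism $\psi : P\to F^0\times G$ over $F^0$ (giving $F^0\times G$ the usual action $g\cdot(v,h)=(v,gh)$). I expect $\psi$ to induce $G$-equivariant homeomorphisms $\Psi_s : s^*(P)\to F^1\times G$ and $\Psi_r : r^*(P)\to F^1\times G$ that cover $\id_{F^1}$. Under these identifications $\theta$ becomes a $G$-equivariant homeomorphism of $F^1\times G$ over $\id_{F^1}$, and such a map is forced to take the form $(e,h)\mapsto(e,hc(e))$ for a uniquely determined function $c : F^1\to G$ (namely $c(e)$ is the $G$-coordinate of the image of $(e,1_G)$); the continuity of $c$ will follow from continuity of $\theta$. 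Transporting the formulas $\tilde s=\pi_2$ and $\tilde r=\pi_2\circ\theta$ of Theorem~\ref{structure} through $\Psi_s$ will then turn them into $(e,h)\mapsto(s(e),h)$ and $(e,h)\mapsto(r(e),hc(e))$, which are exactly the structure maps of $F\times_c G$; together with $\psi$ this yields the desired $G$-equivariant isomorphism of topological graphs.

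For the converse, suppose a $G$-equivariant isomorphism of topological graphs $\phi=(\phi^0,\phi^1) : E\to F\times_c G$ is given for some continuous cocycle $c$. I will observe that $\mathrm{pr}_{F^0}\circ\phi^0 : P\to F^0$ is $G$-invariant (because the $G$-action on $F^0\times G$ fixes the first coordinate), so it factors through the orbit map $q$ as $\bar\phi^0\circ q$ for some homeomorphism $\bar\phi^0 : F^0\to F^0$. Then
\[
((\bar\phi^0)^{-1}\times\id_G)\circ\phi^0 : P\to F^0\times G
\]
will be a $G$-equivariant homeomorphism over $\id_{F^0}$, i.e., a trivialization of the principal $G$-bundle $P\to F^0$.

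The only real bookkeeping will be verifying that $\theta$ takes the claimed form in the forward direction and that the resulting $c$ is continuous, both of which follow immediately from continuity and equivariance of $\Psi_r\circ\theta\circ\Psi_s^{-1}$ once the trivializations are in place. I do not anticipate a deeper obstacle: the corollary essentially asserts the dictionary ``trivial bundle $\Longleftrightarrow$ skew product'' applied to the construction in Theorem~\ref{structure}, and the only subtlety—pre-composing with $(\bar\phi^0)^{-1}\times\id_G$ in the converse so that the resulting map actually covers $\id_{F^0}$—is handled by the paragraph above.
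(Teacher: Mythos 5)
Your proof is correct and follows essentially the same route as the paper: both directions rest on the facts that the pull-back of a trivial bundle is trivial and that a $G$-equivariant map over the identity is forced to have the form $(e,h)\mapsto(e,hc(e))$, which produces the continuous cocycle $c$. The only cosmetic difference is that you trivialize both $s^*(P)$ and $r^*(P)$ and conjugate $\theta$, whereas the paper trivializes only $s^*(P)$ and transports the range map $\tilde r=\pi_2\circ\theta$ directly; your correction by $((\bar\phi^0)^{-1}\times\id_G)$ in the converse, so that the trivialization covers $\id_{F^0}$, just spells out the direction the paper dismisses as trivial.
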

\begin{proof}
The forward direction is trivial, so assume that the $G$-bundle $q:P\to F^0$ is trivial, and let $E$ be the topological graph constructed from an isomorphism $\theta:s^*(P)\to r^*(P)$.
The pull-back of a trivial bundle is trivial,
so most of what we have to do is quite straightforward;
the only slightly non-obvious bit is that the isomorphism $s^*(P)\cong F^1\times G$ of $G$-bundles can be promoted to a topological-graph isomorphism $E\cong F\times_c G$ for a suitable cocycle $c$.

Let
$\varphi^0:P\to F^0\times G$
be a $G$-bundle isomorphism.
We must show that there are:
\begin{itemize}
\item a $G$-bundle isomorphism $\varphi^1:E^1\to F^1\times G$,
and 

\item a cocycle
$c:F^1\to G$
\end{itemize}
such that $\varphi=(\varphi^0,\varphi^1):E\to F\times_c G$ is an isomorphism of topological graphs.

The diagram
\begin{equation}
\label{big}
\xymatrix{
P \ar[dd]_q \ar[dr]^{\varphi^0}_\cong
&&&E^1 \ar[dd]^{\pi_1}
\ar[dl]_{\varphi^1}^\cong \ar[dr]^{\varphi^1}_\cong
\ar[lll]_{\tilde r} \ar[rrr]^{\tilde s=\pi_2}
&&&P \ar[dd]^q \ar[dl]_{\varphi^0}^\cong
\\
&F^0\times G \ar[dl]^{\pi_1}
&F^1\times G \ar[dr]_{\pi_1} \ar[l]_{r'}
&&F^1\times G \ar[dl]^{\pi_1} \ar[r]^{s\times\id}
&F^0\times G \ar[dr]_{\pi_1}
\\
F^0
&&&F^1 \ar[lll]^r \ar[rrr]_s
&&&F^0,
}
\end{equation}
where
\[
r':(F\times_c G)^1=F^1\times G\to F^0\times G=(F\times_c G)^0
\]
is the range map for the skew product, given by
\[
r'(e,g)=(r(e),gc(e)),
\]
illustrates the hypothesis and conclusion.
Keep in mind that $E^0=P$ and $E^1=s^*(P)$.
We are given the map $\varphi^0$, and since it is a $G$-bundle isomorphism the extreme left and right triangles commute.

It is an elementary fact about pull-backs that there is a $G$-bundle isomorphism $\varphi^1:s^*(P)\to F^1\times G$ making the right half of the diagram \eqref{big} commute.
We can now \emph{define} $r':F^1\times G\to F^0\times G$ so that the left half of \eqref{big} commutes.
It remains to show that there is a continuous map $c:F^1\to G$ such that $r'$ has the form
\[
r'(e,g)=(r(e),gc(e)).
\]
Since
\[
\pi_1\circ r'=r\circ\pi_1:F^1\times G\to F^0,
\]
we have
$\pi_1(r'(e,1))=r(e)$.
Define $c:F^1\to G$ by
\[
c(e)=\pi_2(r'(e,1)).
\]
Then $c$ is continuous, and we have
\begin{align*}
r'(e,g)
&=r'(g\cdot(e,1))
\\&=g\cdot r'(e,1)
\\&=g\cdot (r(e),c(e))
\\&=(r(e),gc(e)).
\qedhere
\end{align*}
\end{proof}

\begin{lem}
\label{compact intersect}
If we are given  principal $G$-bundles $P_i\stackrel {q_i}{\longrightarrow} X_i$  for $i = 1, 2$ and a  pair of 
continuous maps $f : X_1 \to X_2$,  $\tilde f : P_1 \to P_2$ such that $\tilde f$
is equivariant and $q_2  \tilde f = fq_1$, and compact sets $K \subset X_1$ and $L \subset P_2$,
then $(q_1)^{-1}(K) \cap \tilde f ^{-1}(L)$ is a compact subset of $P_1$.
\end{lem}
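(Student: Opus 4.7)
The plan is to invoke Lemma~\ref{husemoller} to replace $P_1$ with the pullback $f^*(P_2)$, where the maps $q_1$ and $\tilde f$ become the coordinate projections $\pi_1$ and $\pi_2$. Since $f^*(P_2)$ is a subspace of $X_1 \times P_2$, compactness will be immediate from the compactness of $K \times L$.

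More precisely, Lemma~\ref{husemoller} gives a $G$-bundle isomorphism (in particular a homeomorphism)
\[
\tht_f : P_1 \to f^*(P_2), \qquad \tht_f(p) = (q_1(p), \tilde f(p)).
\]
Under $\tht_f$, the set $q_1^{-1}(K) \cap \tilde f^{-1}(L)$ corresponds to
\[
\tht_f\bigl(q_1^{-1}(K) \cap \tilde f^{-1}(L)\bigr) = (K \times L) \cap f^*(P_2),
\]
so it suffices to check that the right-hand side is compact in $X_1 \times P_2$.

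Now $f^*(P_2) = \{(x,p) \in X_1 \times P_2 : f(x) = q_2(p)\}$ is the equalizer of the two continuous maps $f \circ \pi_1$ and $q_2 \circ \pi_2$ into the Hausdorff space $X_2$, hence is closed in $X_1 \times P_2$. Thus $(K \times L) \cap f^*(P_2)$ is a closed subset of the compact set $K \times L$, and is therefore compact. Applying the homeomorphism $\tht_f^{-1}$ gives the result.

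The step most worth naming as a potential snag is the Hausdorffness of $X_2$ needed to conclude that $f^*(P_2)$ is closed in the product; this is guaranteed because all spaces in sight are assumed to be locally compact Hausdorff. Beyond that the argument is a straightforward translation via Lemma~\ref{husemoller}.
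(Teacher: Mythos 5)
Your proof is correct and follows essentially the same route as the paper: both identify $q_1^{-1}(K)\cap\tilde f^{-1}(L)$ with $(K\times L)\cap f^*(P_2)$ via the isomorphism $\tht_f$ of Lemma~\ref{husemoller} and use that $f^*(P_2)$ is closed in $X_1\times P_2$. Your version merely spells out the Hausdorffness/equalizer reason for closedness, which the paper leaves implicit.
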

\begin{proof}
Observe that $f^*(P_2)$ is a closed subset of $X_1 \times P_2$ and hence
$(K\times L) \cap f^*(P_2)$ is compact.   Therefore,
\[
q_1^{-1}(K) \cap \tilde f ^{-1}(L) = (\tht_f )^{-1}((K\times L) \cap f^*(P_2))
\]
is also compact.
\end{proof}

\begin{cor}
\label{compactly trivial}
Let a locally compact group $G$ act freely and properly on a topological graph $E=(E^0,E^1,s,r)$,
let $q_1:E^1\to E^1/G$ be the quotient map, and
let $K\subset E^1/G$ be compact. Then:
\begin{enumerate}
\item For every compact subset $L\subset E^0$, 
both intersections
\[
q_1\inv(K)\cap r\inv(L)\midtext{and}
q_1\inv(K)\cap s\inv(L)
\]
are compact.

\item There exists $d\ge 0$ such that
\[
\bigl|q\inv(K)\cap s\inv(v)\bigr|\le d
\midtext{for all}v\in E^0.
\]
\end{enumerate}
\end{cor}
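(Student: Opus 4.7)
The plan is to obtain part~(1) directly from Lemma~\ref{compact intersect}, and to derive part~(2) by combining the standard local-homeomorphism cardinality estimate on the quotient graph $F=E/G$ with the observation that $q_1$ induces a bijection between source fibres. For part~(1), I would apply Lemma~\ref{compact intersect} to the principal $G$-bundles $q_1:E^1\to E^1/G$ and $q_0:E^0\to E^0/G$, taking $\tilde f$ to be $r$ or $s$. Both are $G$-equivariant and each descends to a continuous map $f:E^1/G\to E^0/G$ making the relevant square commute. Feeding in the given compact $K\subset E^1/G$ and $L\subset E^0$, the lemma immediately yields compactness of $q_1\inv(K)\cap r\inv(L)$ and $q_1\inv(K)\cap s\inv(L)$.

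For part~(2), by Theorem~\ref{structure} the quotient $F=E/G$ is a topological graph whose source map $s_F:F^1\to F^0$ is a local homeomorphism. I would invoke the standard fact that for any local homeomorphism $p:X\to Y$ and any compact $K\subset X$ there exists an integer $N$ with $|K\cap p\inv(y)|\le N$ for every $y\in Y$ (cover $K$ by finitely many open sets on each of which $p$ is injective). Applying this to $s_F$ and the given compact $K\subset F^1$ produces such an $N$.

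It then remains to transfer the bound from $F^1$ back to $E^1$, and the key claim is that for each $v\in E^0$ the map $q_1$ restricts to a bijection $s\inv(v)\to s_F\inv(q_0(v))$. Injectivity uses freeness: if $q_1(e_1)=q_1(e_2)$ with both sources equal to $v$, then $e_2=g\cdot e_1$ for some $g\in G$, and $v=s(e_2)=g\cdot v$ forces $g=1$. For surjectivity, given $f\in s_F\inv(q_0(v))$, pick any $e_0\in q_1\inv(f)$; since $q_0(s(e_0))=s_F(f)=q_0(v)$, there is a unique $g\in G$ with $g\cdot s(e_0)=v$, and then $g\cdot e_0$ lies in $s\inv(v)\cap q_1\inv(f)$. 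With this bijection in hand, $|q_1\inv(K)\cap s\inv(v)|=|K\cap s_F\inv(q_0(v))|\le N$, so $d:=N$ works. The only mildly non-routine step is the source-fibre bijection; the rest is either Lemma~\ref{compact intersect} or a finite-cover argument.
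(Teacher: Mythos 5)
Your proposal is correct and follows essentially the same route as the paper: part (1) is a direct application of Lemma~\ref{compact intersect}, and part (2) reduces to the standard bounded-fibre estimate for the local homeomorphism $s:F^1\to F^0$ over the compact set $K$. The only difference is cosmetic — the paper invokes Theorem~\ref{structure} to replace $E$ by the pullback model $s^*(P)$, in which the identification of $q_1\inv(K)\cap s\inv(v)$ with $K\cap s\inv(q(v))$ is immediate, whereas you establish the same source-fibre bijection directly from freeness of the action, which is a perfectly valid (and arguably more self-contained) way to do the bookkeeping.
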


\begin{proof}
(1) follows immediately from Lemma~\ref{compact intersect}.
For (2), note first of all that the conclusion is unaffected by replacing $E$ with an isomorphic topological graph,
so by Theorem~\ref{structure} we may assume that $E$ is constructed from
a topological graph $F$, a  principal $G$-bundle $q:P\to F^0$, and
an isomorphism $\theta:s^*(P)\to r^*(P)$.
Thus $E^1=s^*(P)$, with 
quotient maps $\pi_1:s^*(P)\to F^1$ on edges and
$q:P\to F^0$ on vertices,
and source map $\pi_2:s^*(P)\to P$.
Note that for $v\in P$, if we put $u=q(v)\in F^0$ then we have
\[
\pi_1\inv(K)\cap \tilde s\inv(v)
=\{(e,v)\in s^*(P):e\in K\}
=\bigl(K\cap s\inv(u)\bigr)\times \{v\}.
\]
Thus
\[
\bigl|\pi_1\inv(K)\cap \tilde s\inv(v)\bigr|
=\bigl|K\cap s\inv(u)\bigr|.
\]
Since $s:F^1\to F^0$ is a local homeomorphism and $K$ is compact,
the cardinalities of the intersections $K\cap s\inv(u)$ for $u\in F^0$ are bounded above by some fixed real number.
\end{proof}

\remark If $G$ is discrete and $G$ acts freely and properly on $E$, then the  morphism $q:E\to E/G$ is  a  graph covering, since $G$ acts properly discontinuously (see 4.69 in \cite{RW}).

\begin{remark} If $E$ and $G$ are discrete and $G$ acts freely on $E$, then by Theorem~2.2.2 in \cite{GT}, there is a cocycle $c:(E/G)^1\to G$ such that $(E/G)\times_cG\cong E$ in an equivariant way. This result can be obtained from Corollary \ref{trivial}, since any  principal $G$-bundle over $E^0$ is trivial. In general, not every free and proper action on a topological graph is associated to a skew product.
\end{remark}

Recall that by \cite[Proposition 8.9]{Ka3} a topological graph $E$ is minimal iff every 
orbit space $\Orb(v,e)$ (see \cite[Definition 4.9]{Ka3}) is dense in $E^0$, where 
$v \in E^0$ and $e$ is a negative orbit of $v$.
If $E^0=E^0_{\text{rg}}$, a negative orbit of $v$ is an infinite path 
$e=e_1e_2\cdots\in E^{\infty}$ ending at $v$ and
\[
\Orb(v,e)=\{r(e') \mid e' \in E^m,\  s(e')=s(e_n) \textrm{  for some }n \}.
\]

\begin{theorem} Let $E$ be a minimal topological graph and let $q:E\to F$ be a graph covering.
Suppose that $E^0$ is not discrete and $E^0=E^0_{\text{rg}}$. Then $F$ is also minimal, 
$F^0$ is not discrete and $F^0=F^0_{\text{rg}}$.  In particular, both $C^*(E)$ and $C^*(F)$ are simple.
\end{theorem}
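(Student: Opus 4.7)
The plan is to verify each of the four conclusions in turn, exploiting the unique path lifting properties of the covering $q=(q^0,q^1)$. For $F^0$ not discrete: since $q^0\colon E^0\to F^0$ is a covering, each point of $F^0$ has an evenly covered neighborhood, so if $F^0$ were discrete, each singleton $\{u\}$ would itself be evenly covered, making $(q^0)\inv(u)$ a set of isolated points of $E^0$; as $E^0$ is the disjoint union of these fibres, it would itself be discrete, contradicting the hypothesis.

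For $F^0=F^0_{\rg}$: given $u\in F^0$, pick $v\in (q^0)\inv(u)$; by hypothesis $v\in E^0_{\rg}$, so there is an open $V\ni v$ with $r\inv(V)$ compact and $r(r\inv(V))=V$. Shrinking $V$ if necessary, I may assume $V$ is evenly covered, so that $q^0|_V$ is a homeomorphism onto $U:=q^0(V)$. The key claim is $q^1(r\inv(V))=r\inv(U)$: the inclusion $\subseteq$ follows from $q^0\circ r=r\circ q^1$, while for $\supseteq$ I use unique lifting for $r$---given $f\in r\inv(U)$, write $r(f)=q^0(v')$ for the unique $v'\in V$ and lift $f$ to the unique $e\in E^1$ with $q^1(e)=f$ and $r(e)=v'$. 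Hence $r\inv(U)=q^1(r\inv(V))$ is compact, and $r(r\inv(U))=q^0(r(r\inv(V)))=q^0(V)=U$, so $u\in F^0_{\rg}$.

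For minimality of $F$: given $u\in F^0$ and a negative orbit $f=f_1f_2\cdots$ of $u$, I pick $v\in (q^0)\inv(u)$ and inductively use unique lifting for $r$ to produce a negative orbit $e=e_1e_2\cdots$ of $v$ lifting $f$. For any $e'\in E^m$ with $s(e')=s(e_n)$, the graph morphism $q$ sends $e'$ edge-by-edge to a path $q(e')\in F^m$ with $s(q(e'))=s(f_n)$ and $r(q(e'))=q^0(r(e'))$; thus $q^0(\Orb(v,e))\subseteq\Orb(u,f)$. Since $\Orb(v,e)$ is dense in $E^0$ by minimality of $E$ and $q^0$ is a continuous surjection, $q^0(\Orb(v,e))$ is dense in $F^0$, and hence so is $\Orb(u,f)$.

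Finally, for simplicity of $C^*(E)$ and $C^*(F)$, I would invoke Katsura's simplicity criterion \cite{Ka3}, which for graphs with all vertices regular reduces to minimality together with topological freeness. Topological freeness follows from the non-discreteness hypothesis: a cycle $e_1\cdots e_n$ without entries in a graph with $E^0=E^0_{\rg}$ would force $r\inv(v_i)=\{e_i\}$ at each cycle vertex $v_i$, so the unique negative orbit from $v_1$ yields $\Orb(v_1,e)=\{v_1,\ldots,v_n\}$, a finite (hence closed) set; by minimality this would equal $E^0$, making $E^0$ discrete. The main technical obstacle I anticipate is the identification $q^1(r\inv(V))=r\inv(U)$ in the regularity step, which is where the interplay between even covering for $q^0$ and unique path lifting for $r$ is actually used; the remaining conclusions are essentially diagram chases once this lemma is in hand.
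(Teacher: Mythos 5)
Your treatment of the first three conclusions is correct and in fact supplies details that the paper compresses into a single sentence (``the unique path lifting property ensures that every orbit space in $F$ is dense''): lifting negative orbits edge by edge via the unique path lifting property for $r$, the identity $q^1(r\inv(V))=r\inv(U)$ for the regularity of $F^0$, and the observation that a continuous surjection carries dense sets to dense sets are exactly the right ingredients, and the non-discreteness of $F^0$ is immediate as you say.

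The gap is in the final step. The paper does not argue topological freeness at all --- it simply quotes Corollary~8.13 of \cite{Ka3} for the simplicity assertion --- whereas you attempt to derive freeness directly, and that derivation fails. By the paper's definition,
\[
\Orb(v_1,e)=\{r(e') \mid e' \in E^m,\ s(e')=s(e_k) \textrm{ for some }k \},
\]
so for the periodic negative orbit $e$ of the base point $v_1$ of a loop $e_1\cdots e_n$ without entrances, $\Orb(v_1,e)$ consists of the ranges of \emph{all} finite paths emanating from the cycle vertices, i.e.\ the full forward orbit of $\{v_1,\dots,v_n\}$. The hypothesis ``no entrances'' constrains $r\inv(v_i)$ but says nothing about $s\inv(v_i)$: edges may exit the cycle, in which case $\Orb(v_1,e)$ properly contains $\{v_1,\dots,v_n\}$ and can be infinite and dense, so no contradiction with minimality arises. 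Equivalently, the cycle vertex set is negatively but not positively invariant, so minimality cannot be applied to it as a closed invariant set. The statement you want --- that a minimal graph with $E^0$ non-discrete and $E^0=E^0_{\rg}$ has no loops without entrances, hence is free --- is true, but it requires a more careful argument and is precisely the content of Katsura's result; the clean repair is to cite Corollary~8.13 of \cite{Ka3} directly, as the paper does.
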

\begin{proof} Indeed, the unique path lifting property ensures that every orbit space in $F$ is dense. The second part follows from Corollary 8.13 in \cite {Ka3}.
\end{proof}
\begin{corollary}
 Let $E$ be a minimal topological graph and suppose that $E^0$ is not discrete and 
 $E^0=E^0_{\text{rg}}$.   If the discrete group $G$ acts freely and properly on $E$,  
 then $C^*(E/G)$ is simple.
\end{corollary}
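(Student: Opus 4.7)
The plan is to reduce this to an immediate application of the Theorem immediately preceding the statement, taking $F:=E/G$. Essentially all of the substantive work has already been done in that Theorem; the only task is to verify that the quotient map $q:E\to E/G$ qualifies as a graph covering between topological graphs.

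First I would invoke the structure theorem (Theorem~\ref{structure}): since $G$ acts freely and properly on $E$, the orbit space $E/G$ inherits the structure of a topological graph whose source and range are induced from those of $E$, and the quotient maps $q^0:E^0\to(E/G)^0$ and $q^1:E^1\to(E/G)^1$ combine to give a graph morphism $q:E\to E/G$.

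Next I would use the Remark just before the preceding Theorem: because $G$ is discrete and acts freely and properly on $E$, the action is properly discontinuous in the classical sense, so that $q^0$ and $q^1$ are covering maps. Hence $q:E\to E/G$ is a graph covering as defined in Section~3.

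Finally I would apply the preceding Theorem to this graph covering $q$. The hypotheses on $E$ (minimal, $E^0$ not discrete, $E^0=E^0_{\text{rg}}$) are exactly those assumed here, and its conclusion yields the simplicity of $C^*(E/G)$. I do not expect any real obstacle, since the statement is essentially a direct corollary: the genuine content is packaged in the preceding Theorem, whose proof exploits unique path lifting to transfer density of orbit spaces through a covering and appeals to Katsura's Corollary~8.13 for simplicity. If anything, the only point requiring a moment's thought is confirming that "free and proper" for the topological graph action does yield the properly discontinuous action needed to invoke the covering Remark, which is standard when $G$ is discrete.
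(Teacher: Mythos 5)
Your proposal is correct and follows exactly the route the paper intends: the corollary is a direct application of the preceding theorem, using the remark that for a discrete group acting freely and properly the quotient morphism $q:E\to E/G$ is a graph covering (via proper discontinuity), together with Theorem~\ref{structure} to know $E/G$ is a topological graph. No gaps.
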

\example
Now let $E=E(G,S)$ be as in Example \ref{cayley} where $G= {\R}$ and 
$S=\{1,\theta\}$, with $\theta$ is irrational.  Then  $E^0={\R}$,  
$E^1=\{1,\theta\}\times{\R}$.  Translation by $\Z$ gives a free and proper 
action on both $E^0$ and $E^1$ which intertwines the structure maps $r, s$.
Hence, translation induces a free and proper action of $\Z$ on $E$.   Since $E$ is 
a minimal topological graph, $E^0$ is not discrete and $E^0=E^0_{\text{rg}}$, we have
by the above corollary that $C^*(E/\Z)$ is simple.

\section{The fundamental group of  a topological graph}

In this section, we define the fundamental group of a topological graph $E=(E^0, E^1, s,r)$ and 
the notion of a universal covering of $E$, using a single topological space $R(E)$ called the 
geometric realization. We need to assume that the geometric realization has a universal covering. 
In this section we need not  assume that the source map is a local homeomorphism, even 
though in all of our examples this is the case.

 For each $e\in E^1$ we formally denote the reversed edge by $\bar{e}$, where $s(\bar{e})=r(e)$ and $r(\bar{e})=s(e)$. The set of reversed edges is denoted $\bar{E}^1$. A {\em walk} in $E$ is a sequence $w=e_1\cdots e_n$ where $e_i\in E^1\cup \bar{E}^1$ such that $s(e_i)=r(e_{i+1})$ for $i=1,...,n-1$. We define $s(w):=s(e_n)$ and $r(w):=r(e_1)$. A vertex will be considered as a trivial walk. A walk $w$ is reduced if it does not contain the subword $e\bar{e}$ for any $e\in E^1\cup\bar{E}^1$. Denote by $E^{rw}$ the space of reduced walks, with product topology. Note that $E^0$ is viewed as a subset of $E^{rw}$. For $w=e_1\cdots e_n\in E^{rw}$, the reverse walk $\bar{w}$ is $\bar{e}_n\cdots\bar{e}_1$.

The following definition is modeled on  \cite[Definition~III.$\mathcal{G}$.3.1]{BH}.

\definition 
An $E$-path $c=(w_0, c_1, w_1,...,c_k,w_k)$  over a partition $0=t_0\le...\le t_k=1$ of the interval $[0,1]$ consists of continuous maps $c_i:[t_{i-1},t_i]\to E^0$ and reduced walks
$w_i\in E^{rw}$ such that $r(w_i)=c_{i+1}(t_i)$ for $i=0,1,...,k-1$ and $s(w_i)=c_i(t_i)$ for $i=1,2,...,k$. The initial point of $c$ is $x=s(w_0)$; its terminal point is $y=r(w_k)$. We say that $c$ joins $x$ to $y$. If $w_0$ and $w_k$ are trivial walks (vertices), they can be dropped in the notation for $c$. In particular, a map $c:[0,1]\to E^0$ can be considered as an $E$-path.

\definition A topological graph $E$ is said to be {\em connected} if given any two vertices $x,y\in E^0$, there is an $E$-path $c$ joining $x$ and $y$.

\definition \label{def:geo}
The {\em geometric realization} of a  topological graph $E$ is the topological space $R(E)$ obtained from the disjoint union $E^1\times [0,1]\sqcup E^0$ by identifying $(e,0)$ with $s(e)$ and $(e,1)$ with $r(e)$ (a kind of double mapping torus). We will identify $E^0$ with a subspace of $R(E)$ in the obvious way. Also, we embed $E^1$ in $R(E)$ by $e \in E^1 \mapsto (e, \frac{1}{2})$.

\remark 
Notice that if $E$ is connected, then $R(E)$ is path connected.  Let $\varphi : E \to F$ be a graph morphism. Then $\varphi$ yields a
natural map 
\[
E^1\times [0,1]\sqcup E^0 \to F^1\times [0,1]\sqcup F^0
\]
which then induces a map $R(\varphi) : R(E) \to R(F)$.
Moreover, for $i =0,1$ the following diagram  commutes:
\[
\begin{CD}E^i@>>>R(E)\\@V\varphi^iVV@VR(\varphi)VV\\F^i@>>>R(F)\end{CD}
\]
where  the horizontal maps are the canonical embeddings given in Definition~\ref{def:geo}.
Observe that $R(\varphi)$ is continuous; moreover, $R(\varphi)$ is a covering if $\varphi$ is.

\remark If the group $G$ acts on the topological graph $E$, then $G$ acts on $R(E)$ by $g\cdot (e,t)=(\lambda^1_g(e),t)$ for $e\in E^1, t\in [0,1]$, and by $g\cdot v=\lambda^0_g(v)$ for $v\in E^0$. Since $\lambda^i_g$ commute with $s$ and $r$ for $i=0,1$, the action is well defined.

\definition 
Let $E=(E^0, E^1, s,r)$ be a a connected topological graph.  We define the fundamental group
of $E$ by  $\pi_1(E) :=  \pi_1(R(E))$.  We say that  $E$ is  simply connected if 
$\pi_1(E)$ is trivial (i.e. $R(E)$ is simply connected).  
We say that a graph morphism  $p: \tilde E \to E$  is a universal covering if
$p$ is a covering and $\tilde E$ is connected and simply connected
(or briefly that $\tilde E$ is a universal cover). 

\remark The fundamental group $\pi_1(E)$ acts freely on $\tilde E$, and the orbit space is isomorphic to $E$. Any subgroup $H$ of $\pi_1(E)$ will determine an intermediate covering of $E$, by taking the graph $\tilde E/H$. Recall that the fundamental group of a finite graph $E$ is the free group with
$|E^1|-|E^0|+1$ generators, see \cite[Lemma 4.10]{KP}.

\begin{proposition}\label{univ}
Let $E$ be a connected  topological graph 
and suppose that $R=R(E)$ is locally path-connected and semi-locally simply connected.
Then $R$ has a universal covering space  $\tilde R$.  Moreover, $\tilde R \cong R(\tilde E)$
where $\tilde E$ is a universal cover of  $E$.
\end{proposition}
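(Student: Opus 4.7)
The plan is to first apply the classical existence theorem for universal covers to $R=R(E)$, and then to read off a graph structure on the relevant subspaces of $\tilde R$. Since $E$ is connected, the remark following Definition~\ref{def:geo} shows $R$ is path-connected; together with the two standing hypotheses this yields a simply connected covering $p\colon \tilde R\to R$ by the usual construction (e.g., Hatcher, Theorem~1.38).

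For the construction of $\tilde E$, let $\iota\colon E^1\hookrightarrow R$ denote the embedding $e\mapsto(e,\tfrac12)$, and set
\[
\tilde E^0:=p^{-1}(E^0),\qquad \tilde E^1:=p^{-1}(\iota(E^1)),
\]
with $p^0,p^1$ the corresponding restrictions of $p$. The restriction of a covering to the preimage of any subspace is itself a covering, so each $p^i$ is a covering map. To define the structure maps, for each $\tilde e\in\tilde E^1$ lying over $\iota(e)$ let $\tilde\alpha_{\tilde e}\colon[0,1]\to\tilde R$ be the unique lift of the edge-path $\alpha_e(t)=(e,t)$ satisfying $\tilde\alpha_{\tilde e}(\tfrac12)=\tilde e$, and declare
\[
\tilde s(\tilde e):=\tilde\alpha_{\tilde e}(0),\qquad \tilde r(\tilde e):=\tilde\alpha_{\tilde e}(1).
\]
These land in $\tilde E^0$ by construction, and $p^0\circ\tilde s=s\circ p^1$, $p^0\circ\tilde r=r\circ p^1$. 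Continuity of $\tilde s,\tilde r$ reduces to joint continuity of $(\tilde e,t)\mapsto\tilde\alpha_{\tilde e}(t)$, which follows from a local application of the homotopy-lifting property using trivialisations of $p$. Thus $\tilde E=(\tilde E^0,\tilde E^1,\tilde s,\tilde r)$ is a topological graph (recall that in this section we do not require the source map to be a local homeomorphism), and $p=(p^0,p^1)\colon\tilde E\to E$ is a graph covering.

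The heart of the argument is identifying $R(\tilde E)$ with $\tilde R$. Define
\[
\Phi\colon \tilde E^1\times[0,1]\sqcup\tilde E^0\to\tilde R
\]
by $\Phi(\tilde v)=\tilde v$ on $\tilde E^0$ and $\Phi(\tilde e,t)=\tilde\alpha_{\tilde e}(t)$ on $\tilde E^1\times[0,1]$. The gluings $(\tilde e,0)\sim\tilde s(\tilde e)$ and $(\tilde e,1)\sim\tilde r(\tilde e)$ defining $R(\tilde E)$ are precisely what is needed for $\Phi$ to descend to a continuous map $\Phi\colon R(\tilde E)\to\tilde R$ intertwining $R(p)$ (which is a covering by the remark after Definition~\ref{def:geo}) with $p$. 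Surjectivity follows from path lifting: any $\tilde x\in\tilde R$ over $(e,t)\in E^1\times[0,1]$ lies on the lift of $\alpha_e$ through $\tilde x$, whose value at $\tfrac12$ supplies the required $\tilde e$. Injectivity uses uniqueness of lifts together with the fact that the only overlaps of distinct edges in $R$ occur in $E^0$. A continuous bijection intertwining two coverings to the same base is automatically a homeomorphism, so $\Phi$ gives $R(\tilde E)\cong\tilde R$. Consequently $R(\tilde E)$ is simply connected, $\tilde E$ is connected (a continuous path in $\tilde R$ between vertices of $\tilde E$ parses into an $E$-path), and $\tilde E$ is therefore a universal cover of $E$.

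The main obstacle is Step~3, the identification $R(\tilde E)\cong\tilde R$: well-definedness and continuity of $\Phi$ hinge on joint continuity of the path lifts, and injectivity must be tracked carefully at vertices where several edges meet. Once $\Phi$ is established as a continuous bijection intertwining the two projections to $R$, the rest of the conclusion follows formally from the universal property of $\tilde R$.
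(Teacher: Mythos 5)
Your proposal is correct and follows essentially the same route as the paper: invoke the classical existence theorem for the universal cover of $R(E)$, set $\tilde E^0=p^{-1}(E^0)$ and $\tilde E^1=p^{-1}(E^1\times\{\tfrac12\})$, and define $\tilde s,\tilde r$ by lifting the edge-paths $t\mapsto(e,t)$. In fact you supply more detail than the paper does, which simply asserts the construction and leaves the identification $R(\tilde E)\cong\tilde R$, the continuity of the structure maps, and the connectedness of $\tilde E$ unverified.
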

\begin{proof}
By \cite[Theorem~10.2]{massey}  $R$ has a universal covering space  $\tilde R$.
Let $\pi: \tilde R\to R$ be the canonical map,  let $\tilde E^0=\pi^{-1}(E^0)$, and let $\tilde E^1=\pi^{-1}(E^1\times\{1/2\})$. In order to define the range and the source maps, we  use the unique path lifting property of the map $\pi$. Let $\tilde e\in\tilde E^1$, then $\pi(\tilde e)=(e, 1/2)$ for some $e \in E^1$. Join $(e, 1/2)$ with the image of $(e,0)$ in $R$ by the path $t \mapsto (e, \frac{1}{2}(1-t)), t\in [0,1]$. Lift this path to a path in $\tilde R$, and define $s(\tilde e)$ to be the endpoint of the lifted path, which belongs to $\tilde E^0$. The range map is obtained similarly by joining $(e,1/2)$ with the image of $(e,1)$.
\end{proof}

\begin{corollary}\label{suffcond}
Consider a connected topological graph $E$. If both $r, s : E^1 \to E^0$ are finite-to-one covering maps and
$E^0$ is  locally contractible (each point has a local base of contractible neighborhoods), then $R(E)$ is also locally contractible. In particular,  $E$ has a universal cover. 
\end{corollary}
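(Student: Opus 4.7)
The strategy is to verify the hypotheses of Proposition~\ref{univ} by showing that $R=R(E)$ is in fact locally contractible, which is strictly stronger than being locally path-connected and semi-locally simply connected. Points of $R$ fall into two classes: interior points $(e,t)$ of edges with $t\in(0,1)$, and vertex points $v\in E^0$. The quotient map is injective on $E^1\times(0,1)$, so an interior point has a neighborhood in $R$ of the form $U\times(a,b)$ with $U$ open in $E^1$ and $(a,b)\subset(0,1)$. Because $s$ is a covering map, hence a local homeomorphism, $E^1$ inherits local contractibility from $E^0$; choosing $U$ contractible then produces a contractible open neighborhood of $(e,t)$.

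The real work is at a vertex $v\in E^0$. Since $s$ and $r$ are finite-to-one covering maps, the fibers $s^{-1}(v)=\{e_1,\dots,e_n\}$ and $r^{-1}(v)=\{f_1,\dots,f_m\}$ are finite. I would choose a contractible open neighborhood $V$ of $v$ that is evenly covered by both $s$ and $r$ and, shrinking $V$ if necessary, arrange that the sheets $U_i\subset s^{-1}(V)$ containing $e_i$ and $W_j\subset r^{-1}(V)$ containing $f_j$ are pairwise disjoint within $s^{-1}(V)$ and $r^{-1}(V)$ respectively, with each restriction $s|_{U_i}\colon U_i\to V$ and $r|_{W_j}\colon W_j\to V$ a homeomorphism. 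As a candidate neighborhood of $v$ in $R$ I would take
\[
N \;=\; V \;\cup\; \bigcup_{i=1}^n \bigl(U_i\times[0,\tfrac12)\bigr) \;\cup\; \bigcup_{j=1}^m \bigl(W_j\times(\tfrac12,1]\bigr),
\]
subject to the usual identifications. Its preimage in $E^1\times[0,1]\sqcup E^0$ is $V$ together with the open set $(s^{-1}(V)\times[0,\tfrac12))\cup(r^{-1}(V)\times(\tfrac12,1])$, so $N$ is open in $R$.

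To contract $N$ to $v$ I would carry out a two-stage deformation. In stage one, for $u\in[0,\tfrac12]$, slide each half-edge point along its edge toward its attachment to $V$: send $(e,t)\in U_i\times[0,\tfrac12)$ to $(e,t(1-2u))$, send $(f,t)\in W_j\times(\tfrac12,1]$ to $(f,t+(1-t)(2u))$, and fix $V$ pointwise. At $u=\tfrac12$ every point of $N$ has been pushed into $V$. In stage two, for $u\in[\tfrac12,1]$, apply a contraction $\gamma\colon V\times[0,1]\to V$ of $V$ to $v$ with parameter $2u-1$. Continuity at the identifications $(e,0)\sim s(e)$ and $(f,1)\sim r(f)$ is automatic because stage one fixes $V$ pointwise, and continuity at the join $u=\tfrac12$ is automatic because $\gamma(\,\cdot\,,0)$ is the identity.

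The main obstacle is less conceptual than bookkeeping: the finite-to-one covering hypothesis is precisely what lets us realize a neighborhood of $v$ as a finite disjoint union of sheets glued to a contractible central piece, making the deformation retraction well defined and continuous in the quotient topology. Without finite-to-one one could face infinitely many sheets accumulating at $v$, where local contractibility in the quotient could fail. Once $R(E)$ is known to be locally contractible, it is in particular locally path-connected and semi-locally simply connected, and Proposition~\ref{univ} produces a universal cover of $R(E)$, and with it a universal cover of $E$.
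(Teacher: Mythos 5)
Your proof is correct and follows essentially the same route as the paper: contractible product neighborhoods at interior edge points, and at a vertex $v$ the open set obtained from $V$ together with $s^{-1}(V)\times[0,\varepsilon)$ and $r^{-1}(V)\times(1-\varepsilon,1]$ over an evenly covered contractible $V$. The only cosmetic difference is that you verify contractibility of this neighborhood by an explicit two-stage deformation retraction onto $v$, whereas the paper identifies it with $S\times V$ for a star $S$ with $p+q$ branches; these amount to the same argument.
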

\begin{proof}
It suffices to show that $R(E)$ is locally contractible since the hypotheses of the above proposition will be satisfied.
A point $(e,t)$ in the image of $E^1\times (0,1)$ has a local base of contractible neighborhoods of the form $V\times W$, where $V$ is a contractible neighborhood of $e$ in $E^1$ and $W$ is a contractible neighborhood of $t$ in $(0,1)$. For the image of $v\in E^0$ in $R(E)$, take $V\subset E^0$ a contractible neighborhood of $v$ which is evenly covered by the maps $s$ and $r$. Then  we claim that the image $N$ of $s^{-1}(V)\times [0,\varepsilon)\cup r^{-1}(V)\times (1-\varepsilon, 1]$ in $R(E)$ is a contractible neighborhood of the image of $v$ in $R(E)$ for any $0<\varepsilon<1/2$. Indeed, $s^{-1}(V)$ is a disjoint union $V_1\cup...\cup V_p$ , and $r^{-1}(V)$ is a disjoint union $U_1\cup...\cup U_q$ of homeomorphic copies of $V$. Let $S$ be the star with $p+q$ branches obtained by gluing $p+q$ copies of $[0,\varepsilon)$ at $0$. Then $N$ is homeomorphic to $S\times V$, which is contractible.
\end{proof}

It would be nice to have a weaker condition on $E$ that would ensure the existence of the universal cover.

\remark In order to obtain other coverings of a graph $E$, we may consider a subgroup $H$ of the fundamental group $\pi_1(E)$ which will act on $\tilde R$, and take the corresponding topological graph of the quotient space $\tilde R/H$, constructed as in the proof of Proposition~\ref{univ}.

\remark The fundamental group of $(E^0, E^1, s,r)$ is isomorphic to the fundamental group of the opposite graph $(E^0, E^1, r, s)$, obtained by interchanging the maps $s$ and $r$. The natural embeddings described in~\ref{def:geo} induce maps $\pi_1(E^0, v)\to \pi_1(E)$ and $\pi_1(E^1,e)\to \pi_1(E)$ for fixed $v\in E^0$ and $e\in E^1$.

\example Consider the topological graph $E$ with $E^0=E^1={\mathbb T}$ and source and range maps $s(z)=z, r(z)=e^{2\pi i\theta}z$ for $\theta$ irrational. The geometric realization is homeomorphic to the 2-torus ${\mathbb T}^2$, hence the fundamental group $\pi_1(E)$ is isomorphic to ${\mathbb Z}^2$. The universal covering graph is $\tilde E=(\tilde E^0, \tilde E^1, s, r)$, where $\tilde E^0=\tilde E^1={\mathbb R}\times{\mathbb Z}$, with $s(y,k)=(y,k), r(y, k)=(y+\theta, k+1)$. The action of ${\mathbb Z}^2$  on $\tilde E$ is given  by $(j, m)\cdot(y,k)=(j+y+m\theta, k+m)$, and $\tilde E/{\mathbb Z}^2\cong E$.  Any other connected covering of $E$ is of the form $\tilde E/H$, where $H$ is a subgroup of ${\mathbb Z}^2$.

More generally, let $X$ be a 
compact space which admits a universal covering space $\tilde X$, and let $h:X\to X$ be a homeomorphism. The geometric realization of the corresponding topological graph $E$ with $E^0=E^1=X$, $s=id$ and $r=h$ is homeomorphic to the mapping torus of $h$, obtained from $X\times [0,1]$ by identifying $(x,1)$ with $(h(x),0)$. The universal covering graph $\tilde E$ has $\tilde E^0= \tilde E^1=\tilde X\times {\mathbb Z}$. The source and range maps are given by $s(y,k)=(y,k), r(y,k)=(\tilde h(y), k+1)$, where $\tilde h:\tilde X\to \tilde X$ is a lifting of $h$, $y\in \tilde X$ and $k\in {\mathbb Z}$. The map $h$ induces an automorphism $h_*$ of $\pi_1(X)$, and the fundamental group of $E$ is isomorphic to the semi-direct product $\pi_1(X)\rtimes {\mathbb Z}$ defined using $h_*$. The action of $\pi_1(X)\rtimes {\mathbb Z}$ on $\tilde X\times {\mathbb Z}$ is given by $(g,m)\cdot (y,k)=(g\cdot\tilde h^m(y), k+m)$. 


\example Let again $E^0=E^1={\mathbb T}$ with source and range maps $s(z)=z^n, r(z)=z^m$ for $n,m$ positive integers. The  geometric realization $R(E)$ is obtained from the cylinder $E^1\times [0,1]$, where the two boundary circles are identified using the source and range maps. Alternatively, we may start with a rectangle, such that  the left and right edges are labeled by $a$. The top edge $E^1\times\{0\}$ is divided into $n$ segments called $b$, and the bottom edge $E^1\times\{1\}$ is divided into $m$ segments also called $b$. By making the identifications, we get the geometric realization $R(E)$. Note that here the arrows are for identification purposes. \begin{figure}[htbp] 
   \centering
   \includegraphics[trim= 0.2in 0in 0.2in 1.2in, clip, width=3in]{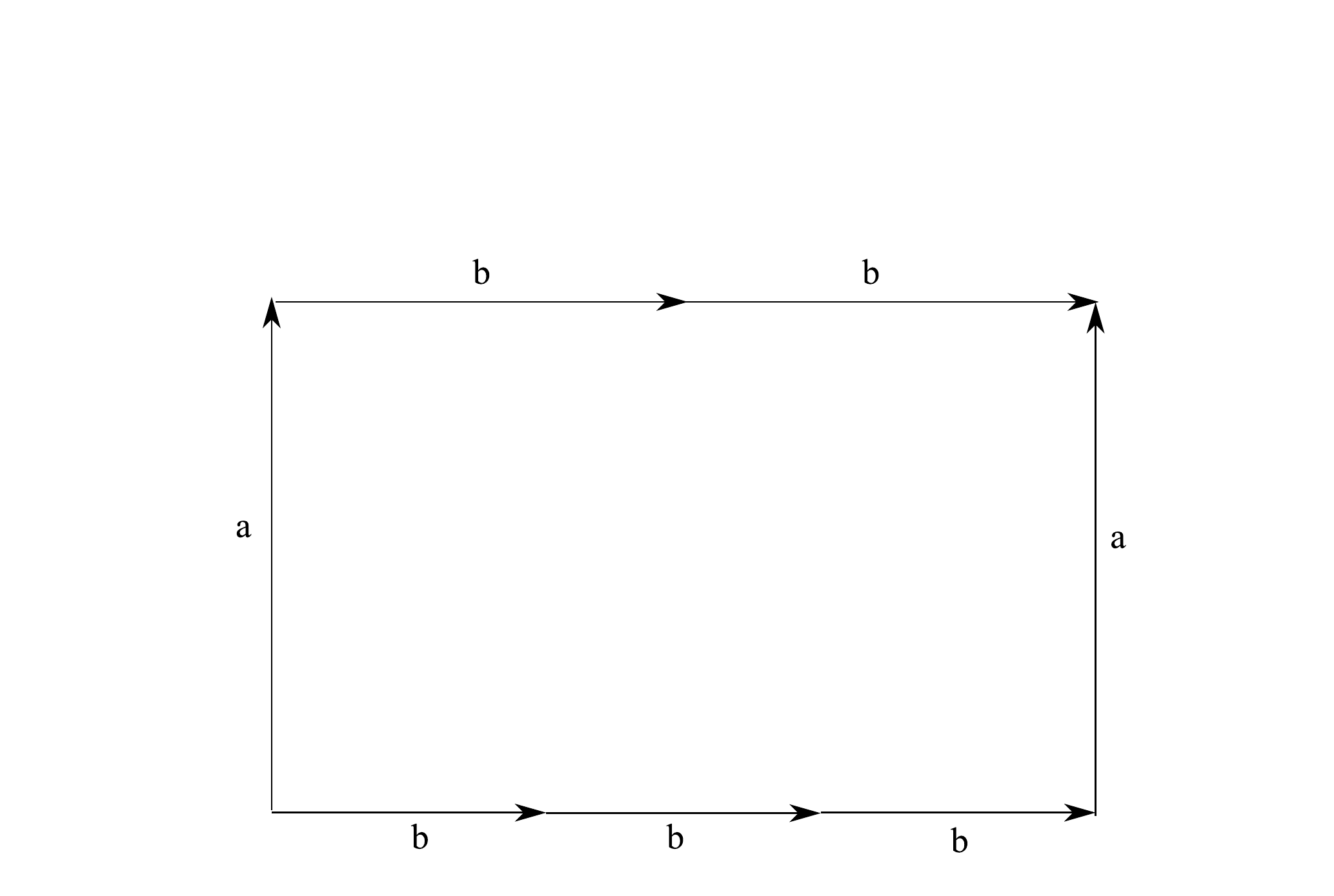} 
   \caption{The case $n=2, m=3.$}
\end{figure}

The segments $a,b$ become generators in the fundamental group
 $\pi_1(R(E))$, which  is isomorphic to the Baumslag-Solitar group 
\[B(n,m)=\langle a,b \mid ab^na^{-1}=b^m\rangle.\] 
For $n=1$ or $m=1$, this group is a semi-direct product and it is amenable. For $n\neq 1, m\neq 1$ and $n,m$ relatively prime, it is not amenable (see Example~E.11 in \cite{BO}).

The universal covering space of ${R(E)}$ is the Cayley complex $\tilde{R}$ of $B(n,m)$, obtained from the Cayley graph  by filling out the rectangles (see, for example page~122 in \cite{LS}). It is the cartesian product $T\times {\mathbb R}$, where $T$ is the Bass-Serre tree of $B(n,m)$, viewed as an HNN-extension. Recall that, given a group $G$, a subgroup $H\subset G$, and a monomorphism $\tau: H\to G$, then the HNN extension $G*_H\tau$ is generated by $G$ and a letter $a$ such that $aha^{-1}=\tau(h)$ for $h\in H$ (see for example \cite{Bau}). In our case, $G=\langle b\rangle\cong {\mathbb Z},\; H=\langle b^n\rangle\cong n{\mathbb Z}$, and $\tau(b^n)=b^m$. 

The free action of the group $B(n,m)$  on $\tilde{R}$  is the extension  to the $2$-cells of the usual left action on the Cayley graph. 
A piece of the Cayley complex of $B(2,3)$ together with the action of the generators is illustrated in Figure 2. 
\begin{figure}[htbp] 
   \centering
   \includegraphics[trim= 0in 0.5in 0in 0in, clip, width=5in]{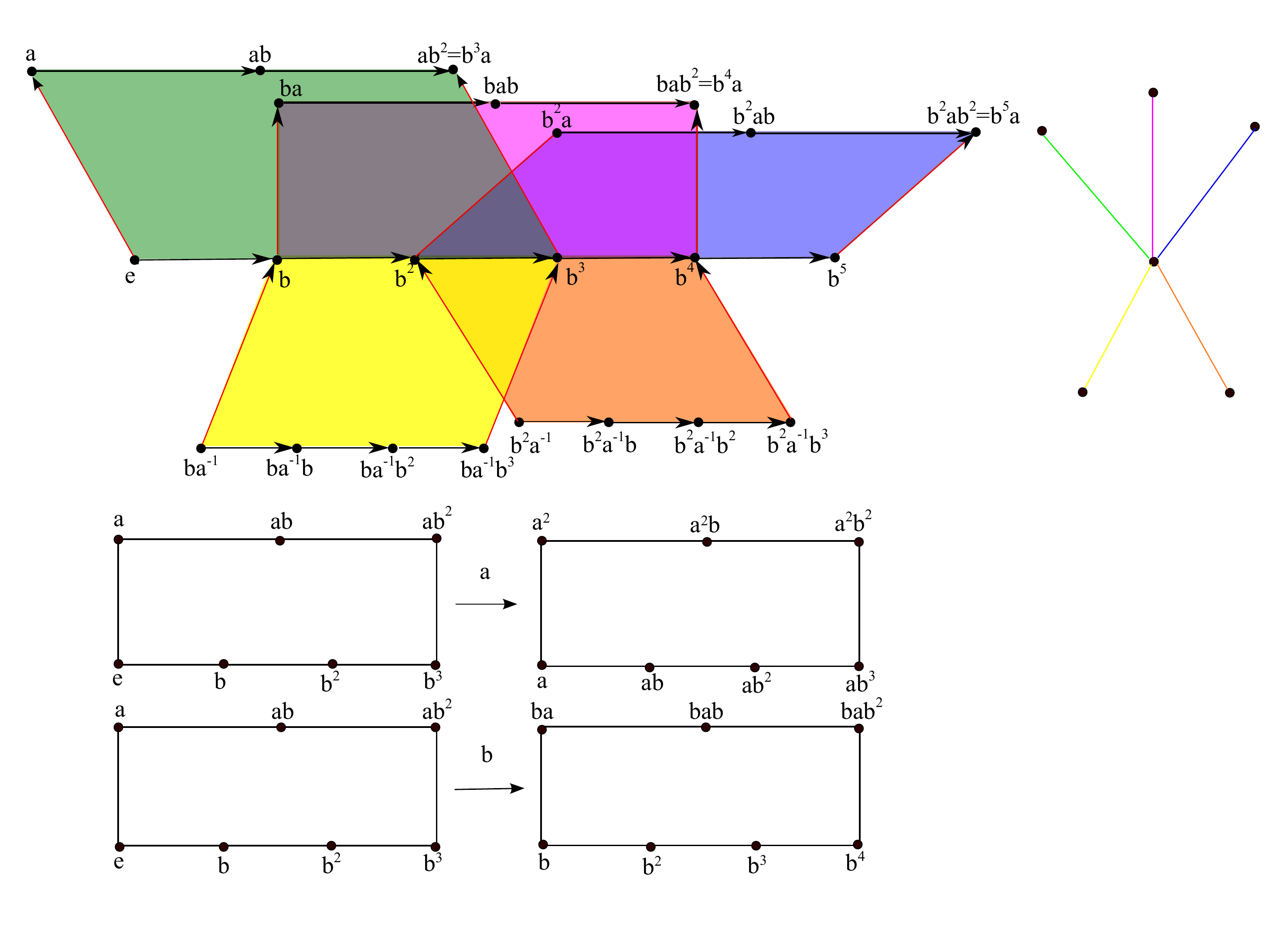} 
   \caption{Cayley complex for $B(2,3).$}
\end{figure}

The $1$-skeleton is the directed Cayley graph of $B(2,3)$, where the generators $a,b$ multiply on the right. The group action is given by left multiplication. In the corresponding tree $T$, each vertex has $5$ edges. The vertex set $T^0$ is identified with the left cosets $g\langle b\rangle\in B(2,3)/\langle b\rangle$, and the edge set $T^1$ with the left cosets $g\langle b^2\rangle\in B(2,3)/\langle b^2\rangle$. The source and range maps are given by $s(g\langle b^2\rangle)= g\langle b\rangle, r(g\langle b^2\rangle)=ga^{-1}\langle b\rangle$ for $g\in B(2,3)$. 

Using Proposition~\ref{univ}, we can describe the universal covering graph $\tilde{E}$ of $E$. We have $\tilde{E}^0\cong T^0\times {\mathbb R}$, $\tilde{E}^1\cong T^1\times {\mathbb R}$ with $\tilde{s}(t,y)=(s(t), ny), \tilde{r}(t,y)=(r(t),my)$ for $t\in T^1$ and $y\in{\mathbb R}$. The group $B(n,m)$ acts freely and properly on $\tilde{E}$, and the quotient graph $\tilde{E}/B(n,m)$ is isomorphic to $E$. In particular, note that the  topological graph $\tilde{E}$ is not a skew product. In the next section, we show that $C^*(\tilde{E})\rtimes_r B(n,m)$ is strongly Morita equivalent to $C^*(E)$.



\section{Group actions on  $C^*$-correspondences}
\label{actions}

\definition Let $G$ be a locally compact group, and let ${\mathcal H}$ be  a $C^*$-correspondence over $A$. We say that $G$ acts on ${\mathcal H}$ if there is a map \[G\times {\mathcal H}\to {\mathcal H},\;\; (g,\xi)\mapsto g\cdot\xi\] such that $g\mapsto g\cdot \xi$ is continuous and $\xi\mapsto g\cdot\xi$ is linear, and a continuous  action of $G$ on $A$ by $*$-automorphisms such that  \[ \langle g\cdot\xi,g\cdot\eta\rangle=g\cdot\langle\xi,\eta\rangle ,\; g\cdot(\xi a)=(g\cdot\xi)(g\cdot a),\; g\cdot(\varphi(a)\xi)=\varphi(g\cdot a)(g\cdot\xi),\]
where $\varphi:A\to{\mathcal L}({\mathcal H})$ defines the left multiplication.

\begin{remark}
There is an action of $G$ on ${\mathcal L}({\mathcal H})$ given by \[(g\cdot T)(\xi)=g\cdot(T(g^{-1}\cdot\xi)),\] such that $g\mapsto g\cdot T$ is continuous with respect to the strict topology on ${\mathcal L}({\mathcal H})$.
We have $g\cdot(\theta_{\xi, \eta})=\theta_{g\cdot\xi,g\cdot\eta}$, which gives an action on ${\mathcal K}({\mathcal H})$. Indeed, 
\[g\cdot(\theta_{\xi, \eta})(\zeta)=g\cdot(\xi\langle\eta,g^{-1}\cdot\zeta\rangle)=(g\cdot\xi)\langle g\cdot\eta,\zeta\rangle=(\theta_{g\cdot\xi,g\cdot\eta})(\zeta).\]

\end{remark}


 An action of $G$ on the $C^*$-correspondence ${\mathcal H}$ defines an action on the Toeplitz algebra ${\mathcal T}_{\mathcal H}$ by $g\cdot T_\xi=T_{g\cdot\xi}$, and on the Cuntz-Pimsner algebra ${\mathcal O}_{\mathcal H}$, since all defining relations are equivariant. 

The full crossed product ${\mathcal H}\rtimes G$ can be defined 
 by ${\mathcal H}\rtimes G={\mathcal H}\otimes_{\varphi}(A\rtimes G)$, where $\varphi: A\to {\mathcal L}(A\rtimes G)$ is the embedding of $A$ in the multiplier algebra of the crossed product $A\rtimes G$, regarded as a Hilbert module over itself. 
  
 Note that $G$ acts on ${\mathcal H}\rtimes G$ by $g\cdot(\xi\otimes f)=g\cdot\xi\otimes(u_gfu_g^{-1})$, where $\xi\in {\mathcal H}, f\in A\rtimes G$, and $g\cdot a=u_gau_g^{-1}$. Here $u:G\to UM(A\rtimes G)$ is the canonical map into the unitary multipliers. The crossed product ${\mathcal H}\rtimes G$ becomes a $C^*$-correspondence over $A\rtimes G$ in a natural way.
 
Similarly, the reduced crossed product ${\mathcal H}\rtimes_r G$ can be defined as a  $C^*$-correspondence over $A\rtimes_rG$; moreover, there is a natural action of $G$ on ${\mathcal H}\rtimes_r G$ defined as above.

\begin{remark}
This  crossed product ${\mathcal H}\rtimes G$ is isomorphic to the one defined using a completion of $C_c(G, {\mathcal H})$, as in \cite{K} and \cite{HN}. The isomorphism is induced by the
function
\[{\mathcal H}\otimes C_c(G,A)\to C_c(G,{\mathcal H})
\]
which sends $\xi\otimes f$ to the map $g \mapsto  \xi f(g)$.
It is proved in \cite[Theorem~2.10]{HN}  that if $G$ is amenable, then
\[{\mathcal O}_{{\mathcal H}\rtimes G}\cong {\mathcal O}_{\mathcal H}\rtimes G.\]
Note that we also consider actions and crossed products by non-amenable groups.
\end{remark}

\begin{proposition} If $G$ acts on the topological graph $E=(E^0,E^1,s,r)$, then $G$ acts on the $C^*$-correspondence ${\mathcal H}={\mathcal H}(E)$ and hence on $C^*(E)$.
\end{proposition}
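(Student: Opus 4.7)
The plan is to define the action on $A = C_0(E^0)$ and on the dense subspace $C_c(E^1)\subset \HH$ in the obvious way, verify the algebraic compatibilities pointwise, and then use isometry plus density to extend the action to all of $\HH$ and check continuity.

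First I would set $(g\cdot f)(v) := f(g\inv\cdot v)$ for $f\in C_0(E^0)$ and $v\in E^0$, and $(g\cdot\xi)(e):=\xi(g\inv\cdot e)$ for $\xi\in C_c(E^1)$ and $e\in E^1$. The first is the standard translation action and is a continuous action of $G$ on $A$ by $*$-automorphisms. For the second, I would check that it respects the inner product and bimodule structure. Since $\lambda^1_g$ commutes with $s$ and $r$, we have $s(g\inv\cdot e)=g\inv\cdot s(e)$, so summing over $e$ with $s(e)=v$ is the same as summing over $e'=g\inv\cdot e$ with $s(e')=g\inv\cdot v$; this yields $\langle g\cdot\xi,g\cdot\eta\rangle(v)=\langle\xi,\eta\rangle(g\inv\cdot v)=(g\cdot\langle\xi,\eta\rangle)(v)$. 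The compatibilities $g\cdot(\xi a)=(g\cdot\xi)(g\cdot a)$ and $g\cdot(\varphi(a)\xi)=\varphi(g\cdot a)(g\cdot\xi)$ are analogous one-line pointwise calculations using that $s$ and $r$ intertwine the actions.

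Next I would extend the action from $C_c(E^1)$ to $\HH$. Since the $C_0(E^0)$-valued inner product is preserved, each map $\xi\mapsto g\cdot\xi$ is isometric on $C_c(E^1)$ in the Hilbert-module norm and so extends uniquely to a $\C$-linear isometry on the completion $\HH$; the group homomorphism property and the algebraic identities persist by continuity and density. This also shows that $g\cdot\xi$ genuinely lies in $\HH$ for every $\xi\in\HH$.

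The main point I expect to require care is continuity of $g\mapsto g\cdot\xi$ in the norm of $\HH$. For $\xi\in C_c(E^1)$ with support $K$, a net $g_i\to g$ has $g_i\cdot\xi$ supported in a fixed compact neighborhood of $g\cdot K$, and $g_i\cdot\xi\to g\cdot\xi$ uniformly by joint continuity of the action $\lambda^1$. Because $s$ is a local homeomorphism, for each $v\in E^0$ only finitely many edges in this common compact support have source $v$, with cardinality bounded uniformly (cf.\ Corollary~\ref{compactly trivial}), so uniform convergence on the edge space forces convergence of the inner products in the sup norm on $C_0(E^0)$, hence norm convergence in $\HH$. Standard $3\varepsilon$-reasoning using density of $C_c(E^1)$ in $\HH$ and the uniform bound $\|g\cdot\xi\|=\|\xi\|$ then promotes continuity to all of $\HH$. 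Once the action on the correspondence is in place, the induced actions on $\TT_\HH$ and on $\OO_\HH=C^*(E)$ are immediate, as noted in the paragraph preceding the proposition.
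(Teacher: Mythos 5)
Your proposal is correct and follows essentially the same route as the paper, which simply defines $(g\cdot\xi)(e)=\xi(g^{-1}e)$ and $(g\cdot f)(v)=f(g^{-1}v)$ and notes that compatibility with the bimodule structure follows from equivariance of $s$ and $r$; you have merely filled in the routine verifications (isometric extension to $\HH$ and norm continuity of $g\mapsto g\cdot\xi$) that the paper leaves implicit.
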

\begin{proof} Define $g\cdot\xi(e)=\xi(g^{-1}e)$ for $\xi\in C_c(E^1)$, and $g\cdot f(v)=f(g^{-1}v)$ for
$f\in C_0(E^0)$. Then this action is compatible with the bimodule structure since $s$ and $r$ are equivariant.
\end{proof}

\definition Recall from \cite{Ri1} that the action $\alpha$ of a locally compact group $G$ on a $C^*$-algebra $A$ is {\em proper} if there is a dense $\alpha$-invariant $*$-subalgebra $A_0$ of $A$ such that for every $a,b\in A_0$ the functions 
\[x\mapsto a\alpha_x(b)\;\; \text{and}\;\; x\mapsto \Delta(x)^{-1/2}a\alpha_x(b)\]
are integrable and there exists a (right) inner product $\langle \cdot, \cdot\rangle_r$ with values in
the subalgebra of $M(A)$, which Rieffel denotes by $M(A_0)$, comprising those elements that multiply $A_0$ into itself,
such that
\[c\langle a, b\rangle_r=\int_Gc\alpha_x(a^*b)dx\;\; \text{for all}\;\; c\in A_0.\]
For such an action, 
\[A^\alpha:=\overline{\text{span}}\{\langle a, b\rangle_r: a,b\in A_0\}\subset M(A)\]
is called the {\em generalized fixed-point algebra}. Define a (left) inner product on $A_0$ with values in $A\rtimes_{\alpha, r}G$ by
\[{}_\ell\langle a, b\rangle(x)=\Delta(x)^{-1/2}a\alpha_x(b^*).\] 
The set 
\[I:=\overline{\text{span}}\{{}_\ell\langle a, b\rangle: a,b\in A_0\}\]
is an ideal in $A\rtimes_{\alpha, r}G$, and the closure ${\mathcal Z}$ of $A_0$ in the norm $\|a\|^2:=\|\langle a, a\rangle_r\|$ is an $I-A^\alpha$ imprimitivity bimodule. The action is called {\em saturated} if $I=A\rtimes_{\alpha, r}G$.

We review now the averaging process of \cite[Section~2]{KQR}, which was built upon work in
\cite{OP1, OP2, Q:landstad, twlan}.
Let a locally compact group $G$ act freely and properly on a locally compact Hausdorff space $T$,
and let $\gamma$ be the associated action of $G$ on $C_0(T)$:
\[
\bigl(\gamma_g(f)\bigr)(t)=f(g\inv\cdot t)\midtext{for}g\in G,f\in C_0(T),t\in T.
\]
Also let $\alpha$ be an action of $G$ on a $C^*$-algebra $A$, and suppose that we have an equivariant nondegenerate\footnote{recall that this means that $\varphi(C_0(T))A=A$} homomorphism $\varphi:C_0(T)\to M(A)$.
For $g\in G$ let $\bar{\alpha_g}$ denote the canonical extension of $\alpha_g$ to $M(A)$, and write $M(A)^\alpha=\{a\in M(A):\bar{\alpha_g}(a)=a\text{ for all }g\in G\}$ (not to be confused with Rieffel's generalized fixed-point algebras discussed above).
It was shown in \cite{KQR} that
\[
A_0:=\spn \varphi(C_c(T))A\varphi(C_c(T))
\]
is a dense $*$-subalgebra of $A$, and that there is a linear map $\Phi:A_0\to M(A)^\alpha$
such that
\[
\omega(\Phi(a))=\int_G \omega(\alpha_g(a))\,dg\midtext{for all}a\in A_0,\ \omega\in A^*.
\]
We write $\Phi^\alpha$ when confusion is possible.
\cite{KQR} also shows that $\Phi(A_0)$ is a $*$-subalgebra of $M(A)$, and so its norm closure, denoted by $\fix(A,\alpha,\varphi)$, is a $C^*$-subalgebra.

\cite[Theorem~5.7]{Ri2} implies that the action $\alpha$ is proper and saturated with respect to $A_0$, and so by \cite{Ri1} the reduced crossed product $A\times_{\alpha,r} G$ is Morita equivalent to a generalized fixed-point algebra $A^\alpha$. It was shown in \cite[Proposition~3.1]{KQR} that $\fix(A,\alpha,\varphi)$ coincides with the algebra $A^\alpha$ of \cite{Ri1}.

We record a few properties of this averaging process, which can be found in \cite[Section~2]{KQR}:
\begin{itemize}
\item If $a\in A_0$ and $b\in M(A)^\alpha$ then $ab\in A_0$ and $\Phi(ab)=\Phi(a)b$.

\item If $f\in C_c(T)$ and $a\in A_0$ then the function $g\mapsto \varphi(f)\alpha_g(a)$ belongs to $C_c(G,A)$ and
\[
\int_G \varphi(f)\alpha_g(a)\,dg=\varphi(f)\Phi(a).
\]

\item If $(B,\beta)$ is another action, and $\psi:C_0(T)\to M(B)$ and $\sigma:A\to M(B)$ are nondegenerate and $G$-equivariant, 
and if the canonical extension $\bar\sigma:M(A)\to M(B)$ satisfies $\bar\sigma\circ\phi=\psi$,
then $\bar\sigma$ restricts to a nondegenerate homomorphism
\[
\bar\sigma|:\fix(A,\alpha,\varphi)\to M(\fix(B,\beta,\psi)).
\]
Moreover,
\[
\Phi^\beta\circ\sigma=\bar\sigma\circ\Phi^\alpha.
\]

\item For fixed $f,h\in C_c(T)$ the map
\[
a\mapsto \Phi(\varphi(f)a\varphi(h)):A_0\to M(A)
\]
is norm continuous.
\end{itemize}

For the proof of Theorem~\ref{morita} below, we will need to make extensive use of the multiplier bimodules of \cite{EKQR}; the relevant facts are collected in Appendix~\ref{A multiplier}.

\begin{theorem}
\label{morita}
If a locally compact group $G$ acts freely and properly on a topological graph $E$, then $C^*(E)\rtimes_r G$ and $C^*(E/G)$ are strongly Morita equivalent.
\end{theorem}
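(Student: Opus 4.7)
The strategy is to identify $C^*(E/G)$ with the generalized fixed-point algebra of $C^*(E)$ under the induced $G$-action $\alpha$, and then appeal to Rieffel's Morita equivalence. Write $F=E/G$ and let $q^i\colon E^i\to F^i$ be the quotient maps. First I would set up the averaging machinery recalled just above the theorem: take the free and proper $G$-space $T=E^0$, take $A=C^*(E)$, and let $\varphi=k_A\colon C_0(E^0)\to M(C^*(E))$ be the canonical inclusion coming from the Cuntz-Pimsner representation. This $\varphi$ is equivariant and nondegenerate, so we obtain a dense $*$-subalgebra $A_0$, an averaging map $\Phi=\Phi^\alpha\colon A_0\to M(C^*(E))^\alpha$, and a generalized fixed-point algebra $\fix(C^*(E),\alpha,\varphi)$. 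By \cite[Proposition~3.1]{KQR} combined with \cite[Theorem~5.7]{Ri2}, $\alpha$ is proper and saturated with respect to $A_0$, so $C^*(E)\rtimes_r G$ is already strongly Morita equivalent to $\fix(C^*(E),\alpha,\varphi)$.

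The bulk of the work is then to identify this fixed-point algebra with $C^*(F)$. I would first construct a map at the level of correspondences: a covariant representation of the $C_0(F^0)$-correspondence $\mathcal{H}(F)$ into a multiplier bimodule structure on $\mathcal{H}(E)$ in the sense of the appendix. On vertices, $f\in C_0(F^0)$ acts through $f\circ q^0\in C_b(E^0)\subseteq M(C_0(E^0))$; on edges, $\eta\in C_c(F^1)$ acts through the bounded operator $\pi_E(\eta\circ q^1)\in\mathcal{L}(\mathcal{H}(E))$, which is available since $\eta\circ q^1\in C_b(E^1)$. The structure theorem \ref{structure} ensures that both $q^0$ and $q^1$ are well behaved and, together with Remark~\ref{regular}, that the Katsura covariance condition at the regular vertices of $F$ pulls back correctly along $q$. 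The multiplier-bimodule version of Cuntz-Pimsner universality from the appendix then yields an equivariant nondegenerate $*$-homomorphism
\[
\rho\colon C^*(F)\longrightarrow M(C^*(E))^\alpha.
\]

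Next I would prove $\rho(C^*(F))=\fix(C^*(E),\alpha,\varphi)$. For the inclusion $\supseteq$, the averaging formula $\int_G\varphi(f)\alpha_g(a)\,dg=\varphi(f)\Phi(a)$ should be applied to Toeplitz generators: if $f,h\in C_c(E^0)$ and $a$ is a monomial in the generators $k_{\mathcal{H}}(\xi)$ and $k_A(g)$ with $\xi\in C_c(E^1)$ and $g\in C_c(E^0)$, then $\Phi(\varphi(f)a\varphi(h))$ should be identified with the $\rho$-image of a product of $C^*(F)$-generators built from the orbit sums of $\xi$, $f$, $g$ and $h$, which descend through $q$ to compactly supported functions on $F^0$ and $F^1$ by the finiteness in Corollary~\ref{compactly trivial}. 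The reverse inclusion $\subseteq$ follows because $\rho$ is built from $G$-invariant pullbacks, so its image lies in $M(C^*(E))^\alpha$, and a direct computation shows the generators are norm limits of elements of $\Phi(A_0)$. Injectivity of $\rho$ can be handled by a gauge-invariant uniqueness argument for the Cuntz-Pimsner algebra $C^*(F)$.

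The main obstacle I anticipate is making the Cuntz-Pimsner covariance condition interact correctly with the averaging process: $\Phi$ is defined only on $A_0$, not on all of $C^*(E)$, so an arbitrary element of the fixed-point algebra must be approximated by $\Phi$-images of polynomials in the Toeplitz generators while keeping track of Katsura's ideal $\varphi^{-1}(\mathcal{K}(\mathcal{H}))\cap(\ker\varphi)^\perp$. The finiteness in part~(2) of Corollary~\ref{compactly trivial}, bounding the number of preimages in a compact edge-set above each vertex, is what allows the orbit sums appearing in these averages to converge and to land in $C_c(F^1)$; this is precisely where the hypothesis that $s$ is a local homeomorphism together with freeness and properness of the action combine to make the argument work.
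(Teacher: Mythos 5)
Your overall architecture is the same as the paper's: reduce to Rieffel's Morita equivalence between $C^*(E)\rtimes_r G$ and the generalized fixed-point algebra via the averaging machinery applied to $T=E^0$ and $\varphi=k_A$, then identify that fixed-point algebra with $C^*(E/G)$ by building a covariant Toeplitz representation of $\HH(E/G)$ in $M(C^*(E))$ through multipliers of $\HH(E)$, with injectivity from gauge-invariant uniqueness. However, there is a genuine gap in the one concrete formula you give: representing an edge function $\eta\in C_c(F^1)$ by the multiplication operator $\pi_E(\eta\circ q^1)\in\LL(\HH(E))$ cannot be the edge part of a Toeplitz representation of $\HH(F)$. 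A Toeplitz representation must satisfy $\tau(\eta)^*\tau(\eta')=\pi(\langle\eta,\eta'\rangle)$, where $\langle\eta,\eta'\rangle(w)=\sum_{s(f)=w}\overline{\eta(f)}\eta'(f)$ lives on $F^0$; but $\pi_E(\eta\circ q^1)^*\pi_E(\eta'\circ q^1)=\pi_E\bigl((\bar\eta\eta')\circ q^1\bigr)$ is again multiplication by a function on $E^1$, with no summation over the fibers of $s$, and it does not equal $\varphi_A\bigl(\langle\eta,\eta'\rangle\circ q^0\bigr)$. What is actually needed is an element of the multiplier bimodule $M(\HH(E))=\LL(C_0(E^0),\HH(E))$, namely the adjointable map $f\mapsto\bigl(e\mapsto\eta(q(e))f(s(e))\bigr)$, whose adjoint is the fiber sum $\zeta\mapsto\bigl(v\mapsto\sum_{s(e)=v}\overline{\eta(q(e))}\zeta(e)\bigr)$; verifying adjointability, the inner-product identity (which uses freeness of the action to identify $s\inv(v)$ bijectively with $s\inv(q(v))$), and the fact that this lands in the $A$-multiplier bimodule $M_A(\HH(E))$ (needed because $(k_X,k_A)$ may be degenerate) is the real analytic content of the construction and is absent from your sketch.

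A second, smaller gap is in the inclusion $\fix(C^*(E),\alpha,k_A)\subset\rho(C^*(F))$: you propose to identify $\Phi\bigl(k_A(f)\,a\,k_A(h)\bigr)$ with orbit sums of the individual generators, but $\Phi$ is not multiplicative, so one cannot average factor by factor. The paper circumvents this by first rewriting $k_A(f)a$ so that its leftmost factors are already $G$-invariant multipliers of the form $\bar{k_X^n}(\mu(\xi_i))$, using the factorization $C_c(E^1)\subset\clspn\{\mu(C_c(F^1))\cdot C_c(E^0)\}\cap\clspn\{C_c(E^0)\cdot\mu(C_c(F^1))\}$ (which rests on the structure theorem for free and proper actions), and only then pulling them out of the average via $\Phi(ba)=b\Phi(a)$ for $b\in M(C^*(E))^\alpha$. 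Some device of this kind is unavoidable, and your sketch does not supply one.
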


\begin{proof}
Since $G$ acts freely and properly on $E^0$ and there is a nondegenerate equivariant homomorphism $C_0(E^0)\to M(C^*(E))$, it follows from \cite{Ri2} and Lemma~4.1 in \cite{aHRW} that the corresponding action $\alpha$ of $G$ on $C^*(E)$ is proper and saturated with respect to the $*$-subalgebra $A_0=C_c(E^0) C^*(E)C_c(E^0)$ of $C^*(E)$.
Therefore the reduced crossed product $C^*(E)\rtimes_r G$ is strongly Morita equivalent to the generalized fixed point algebra $C^*(E)^\alpha$.
Thus it suffices to show $C^*(E)^\alpha\cong C^*(E/G)$,
and for this we will construct an injective homomorphism from $C^*(E/G)$ to $M(C^*(E))$ whose image is $C^*(E)^\alpha$.

As before, denote the quotient topological graph by $F=E/G$, so that $F^0=E^0/G$ and $F^1=E^1/G$.
It will also be convenient to denote both quotient maps $E^0\to F^0$ and $E^1\to F^1$ by $q$ (and no confusion will occur since the particular $q$ that is intended will be clear from the context), and to
use the following abbreviations:
\begin{itemize}
\item $A=C_0(E^0)$;

\item $B=C_0(F^0)$;

\item $X=\HH(E)$;

\item $Y=\HH(F)$.
\end{itemize}
Of course, our homomorphism $C^*(F)\to M(C^*(E))$ will be of the form $\tau\times\pi$ for a Cuntz-Pimsner covariant Toeplitz representation $(\tau,\pi)$ of $Y$ in $M(C^*(E))$.

We will construct $(\tau,\pi):Y\to M(C^*(E))$ by, roughly speaking, first mapping the $B$-correspondence $Y$ into the multiplier bimodule $M(X)$, then composing with the canonical Toeplitz representation $(k_X,k_A)$ of the $A$-correspondence $X$ in $C^*(E)$.
Actually, this needs a little tweaking;
our strategy is more accurately indicated by the diagram
\begin{equation}
\label{tau pi}
\xymatrix{
Y \ar@{-->}[r]^-{(\tau,\pi)} \ar[d]_{(\mu,\nu)}
&M(C^*(E))
\\
M_A(X). \ar[ur]_{(\bar{k_X},\bar{k_A})}
}
\end{equation}

Note the appearance in the above diagram of the ``$A$-multiplier'' bimodule $M_A(X)$ (see Definition~\ref{relative multiplier} and Remark~\ref{KM in MK}); this is necessary because the canonical Toeplitz representation $(k_X,k_A)$ can be degenerate --- see Example~\ref{degenerate}.

Our proof is rather long, and to improve readability we break it into the following steps:
\begin{enumerate}
\item construct a correspondence homomorphism $(\mu,\nu)$ as in Diagram~\ref{tau pi};

\item define $(\tau,\pi)$ to make Diagram~\ref{tau pi} commute,
and show that it is a Toeplitz representation;

\item show that the Toeplitz representation $(\tau,\pi)$ is Cuntz-Pimsner covariant;

\item show that the associated homomorphism $\tau\times\pi:C^*(F)\to M(C^*(E))$ is injective;

\item show that the image of $\tau\times\pi$ is $C^*(E)^\alpha$.
\end{enumerate}
We take each step in order:

\begin{step}
Clearly, we should define $(\mu,\nu)$ by composing with the quotient map $q$.
On $B$ this obviously does the right thing:
\[
\nu:B\to M(A), \qquad \nu(f):=f\circ q.
\]

On $Y$, things are a little more delicate:
for $\xi\in C_c(F^1)$ and $f\in C_c(E^0)$ define $\mu(\xi)\cdot f:E^1\to \C$ by
\[
\bigl(\mu(\xi)\cdot f\bigr)(e)=\xi(q(e))f(s(e)).
\]
Then $\mu(\xi)\cdot f\in C_c(E^1)$ by Lemma~\ref{compactly trivial}.

We must show that we can extend $\mu$ to get a correspondence homomorphism $(\mu,\nu):Y\to M_A(X)$,
and we will accomplish this by:
\begin{itemize}
\item 
extending to a correspondence homomorphism into $M(X)$,
and then

\item 
showing that it actually takes values in $M_A(X)$.
\end{itemize}

We first need to know that for $\xi\in C_c(F^1)$ the map
\[
f\mapsto \mu(\xi)\cdot f:C_c(E^0)\to C_c(E^1)
\]
extends to an adjointable map $A\to X$, i.e., an element of $M(X)$.
For technical purposes, we need the following

\textbf{Claim:}
If
$\xi,\eta\in C_c(F^1)$ and $v\in E^0$,
then
\[
\<\xi,\eta\>(q(v))
=\sum_{s(e)=v}\overline{\xi(q(e))}\eta(q(e)).
\]
To see this, just observe that, for every $v'\in F^0$, no matter which element $v\in q\inv(v')$ we choose, the set of values
\[
\bigl\{\overline{\xi(q(e))}\eta(q(e)):s(e)=v\bigr\}
\]
coincides with the set of values
\[
\bigl\{\overline{\xi(e')}\eta(e'):s(e')=v'\bigr\},
\]
because $G$ acts freely and the source map $s:E^1\to E^0$ is $G$-equivariant,
and we have proved the claim.

We use this to show that
the linear map
\[
f\mapsto \mu(\xi)\cdot f:C_c(E^0)\to X
\]
is bounded, and hence
extends uniquely to a bounded linear map
\[
\mu(\xi):A\to X.
\]
For $v\in E^0$ we have
\begin{align*}
\<\mu(\xi)\cdot f,\mu(\xi)\cdot f\>(v)
&=\sum_{s(e)=v}|\mu(\xi)(q(e))|^2|f(v)|^2
\\&=\<\xi,\xi\>(q(e))|f(v)|^2
\\&\le \|\xi\|^2\|f\|^2,
\end{align*}
so
\begin{equation}
\label{bounded}
\|\mu(\xi)\cdot f\|\le \|\xi\|\|f\|,
\end{equation}
verifying the boundedness.

We need to show that the linear map $\mu(\xi):A\to X$ is adjointable.
For $\eta\in C_c(E^1)$ define $\mu(\xi)^*\cdot \eta:E^0\to \C$ by
\[
\bigl(\mu(\xi)^*\cdot \eta\bigr)(v)
=\sum_{s(e)=v}\overline{\xi(q(e))}\eta(e),
\]
where the sum is finite by Corollary~\ref{compactly trivial}.
An argument similar to the proof of \cite[Lemma~1.5]{Ka1} shows that $\mu(\xi)^*\cdot\eta\in C_c(E^0)$.
We have
\begin{align*}
\bigl|\bigl(\mu(\xi)^*\cdot \eta\bigr)(v)\bigr|^2
&\le \sum_{s(e)=v}|\xi(q(e))|^2\sum_{s(e)=v}|\eta(e)|^2
\\&=\<\xi,\xi\>(q(v))\<\eta,\eta\>(v)
\\&\le \|\xi\|^2\|\eta\|^2,
\end{align*}
so
\[
\|\mu(\xi)^*\cdot \eta\|\le \|\xi\|\|\eta\|.
\]
Thus the map
\[
\eta\mapsto \mu(\xi)^*\cdot\eta:C_c(E^1)\to C_c(E^0)
\]
extends uniquely to a bounded linear map
\[
\mu(\xi)^*:X\to A.
\]
For $f\in C_c(E^0)$, $\eta\in C_c(E^1)$, and $v\in E^0$ we have
\begin{align*}
\<\mu(\xi)\cdot f,\eta\>(v)
&=\sum_{s(e)=v}\overline{\xi(q(e))}\overline{f(v)}\eta(e).
\\&=\overline{f(v)}\bigl(\mu(\xi)^*\cdot\eta\bigl)(v),
\end{align*}
so
\[
\<\mu(\xi)\cdot f,\eta\>=\<f,\mu(\xi)^*\cdot \eta\>,
\]
and it follows by continuity that $\mu(\xi)^*:X\to A$ is an adjoint of $\mu(\xi)$.
Therefore $\mu(\xi)\in M(X)$.

We now have a linear map $\mu:C_c(F^1)\to M(X)$, and 
then the estimate~\eqref{bounded} shows $\mu$ is bounded, hence extends uniquely to a bounded linear map $\mu:Y\to M(X)$.
We can now show that the pair $(\mu,\nu)$ is a Toeplitz representation.
First we show that for $\xi,\eta\in Y$ we have
\[
\<\mu(\xi),\mu(\eta)\>=\nu(\<\xi,\eta\>),
\]
and by continuity 
the following computation is sufficient:
for $\xi,\eta\in C_c(F^1)$ and $f\in C_c(E^0)$ and $v\in E^0$ we have
\begin{align*}
\bigl(\<\mu(\xi),\mu(\eta)\>f\bigr)(v)
&=\bigl(\mu(\xi)^*\cdot\mu(\eta)\cdot f\bigr)(v)
\\&=\sum_{s(e)=v}\overline{\xi(q(e))}\bigl(\mu(\eta)\cdot f\bigr)(e)
\\&=\sum_{s(e)=v}\overline{\xi(q(e))}\eta(q(e))f(s(e))
\\&=\<\xi,\eta\>(q(v))f(v)
\\&=\bigl(\nu(\<\xi,\eta\>)f\bigr)(v).
\end{align*}

We next show that for $f\in B$ and $\xi\in Y$ we have
\[
\mu(f\cdot\xi)=\nu(f)\cdot\mu(\xi),
\]
and again by continuity the following computation suffices:
for $f\in C_c(F^0)$, $\xi\in C_c(F^1)$,
$h\in C_c(E^0)$, and $e\in E^1$ we have
\begin{align*}
\bigl(\mu(f\cdot\xi)\cdot h\bigr)(e)
&=(f\cdot\xi)(q(e))h(s(e))
\\&=f(r(q(e)))\xi(q(e))h(s(e))
\\&=f(q(r(e)))\bigl(\mu(\xi)\cdot h\bigr)(e)
\\&=\nu(f)(r(e))\bigl(\mu(\xi)\cdot h\bigr)(e)
\\&=\Bigl(\nu(f)\cdot\bigl(\mu(\xi)\cdot h\bigr)\Bigr)(e)
\\&=\Bigl(\bigl(\nu(f)\cdot\mu(\xi)\bigr)\cdot h\Bigr)(e).
\end{align*}

We thus have a  homomorphism  $(\mu,\nu)$ from  the $B$-correspondence $Y$ to  the $M(A)$-correspondence $M(X)$,
and
it remains to show that for $\xi\in Y$ we actually have $\mu(\xi)\in M_A(X)$, 
i.e., for $f\in A$ we have
\[
f\cdot\mu(\xi)\in X
\]
(because $\mu(\xi)\cdot f\in X$ automatically since $\mu(\xi)$ is a module multiplier).
By continuity it suffices to show that
if $\xi\in C_c(F^1)$
and $f\in C_c(E^0)$
then
\[
f\cdot \mu(\xi)\in C_c(E^1).
\]
For $h\in C_c(E^0)$ and $e\in E^1$ we have
\begin{align*}
\Bigl(\bigl(f\cdot\mu(\xi)\bigr)\cdot h\Bigr)(e)
&=\Bigl(f\cdot\bigl(\mu(\xi)\cdot h\bigr)\Bigr)(e)
\\&=f(r(e))\bigl(\mu(\xi)\cdot h\bigr)(e)
\\&=f(r(e))\xi(q(e))h(s(e)),
\end{align*}
so the multiplier $f\cdot \mu(\xi)$ of $X$ coincides with the element of $C_c(E^1)$ given by
\[
e\mapsto f(r(e))\xi(q(e))
\]
(which has compact support
by Corollary~\ref{compactly trivial}).

\end{step}

\begin{step}
By Corollary~\ref{C=A toeplitz} the Toeplitz representation
\[
(k_X,k_A):X\to C^*(E)
\]
extends to a Toeplitz representation
\[
(\overline{k_X},\overline{k_A}):M_A(X)\to M_A(C^*(E)),
\]
because $k_A$ is nondegenerate.

Now define $(\tau,\pi):Y\to M_A(C^*(E))$ to make diagram~\eqref{tau pi} commute,
i.e.,
\[
\tau:=\bar{k_X}\circ\mu\midtext{and}\pi:=\bar{k_A}\circ\nu.
\]
Then $(\tau,\pi)$ is a Toeplitz representation, being the composition of the correspondence homomorphism $(\mu,\nu)$ and the Toeplitz representation $(\overline{k_X},\overline{k_A})$.
\end{step}

\begin{step}
Cuntz-Pimsner covariance of $(\tau,\pi)$ will follow from an analogous property of the correspondence homomorphism $(\mu,\nu)$.
Our strategy is illustrated by the diagram
\begin{equation}
\label{cuntz pimsner}
\xymatrix{
B_{\rg}
\ar[rr]^-{\nu_{\rg}'} \ar[dd]_{\varphi_{B,\rg}} \ar[dr]^{\pi_{\rg}}
&&M_A(A_{\rg})
\ar[dl]_{\bar{k_{A,\rg}}} \ar[dd]^{\bar{\varphi_{A,\rg}}}
\\
&M_A(C^*(E))
\\
\KK(Y)
\ar[rr]_-{\mu^{(1)}} \ar[ur]^{\tau^{(1)}}
&&M_A(\KK(X))
\ar[ul]_{\bar{k_X^{(1)}}},
}
\end{equation}
which requires explanation.
We define
$B_{\rg}=C_0(F^0_{\rg})$,
and we let $\varphi_{B,\rg}$ denote the restriction to the ideal $B_{\rg}$ of the natural homomorphism $\varphi_B:B\to \LL(Y)$ implementing the left module multiplication,
and similarly for $A_{\rg}$ and $\varphi_{A,\rg}$.
Also similarly for the restrictions $\pi_{\rg}$ of $\pi:B\to M_A(C^*(E))$ and $k_{A,\rg}$ of $k_A:A\to C^*(E)$.
The bars in the right-hand triangle of \eqref{cuntz pimsner} denote the unique $A$-strictly continuous extensions guaranteed by Lemma~\ref{extend C multiplier} and Corollary~\ref{ideal}.
For the bottom arrow, note that the correspondence homomorphism
$\mu:Y\to M_A(X)$ naturally induces a homomorphism
$\mu^{(1)}:\KK(Y)\to \KK(M_A(X))$,
and by Remark~\ref{KM in MK} we have
$\KK(M_A(X))\subset M_A(\KK(X))$.

The part of \eqref{cuntz pimsner} requiring the most explanation is $\nu_{\rg}'$.
We have defined a homomorphism $\nu:B\to M(A)$, and we denote the restriction to $B_{\rg}$ by
\[
\nu_{\rg}:B_{\rg}\to M(A).
\]
For the purposes of the rest of \eqref{cuntz pimsner}, what we need is to have a homomorphism into $M_A(A_{\rg})$.

The problem is that $M_A(A_{\rg})$ doesn't embed naturally in $M(A)$. However, we will show that $\nu_{\rg}$ actually maps into the $C^*$-subalgebra
\[
M_{A,A_{\rg}}(A):=\{f\in M(A):fA\subset A_{\rg}\},
\]
of $M(A)$.
Since every $f\in M_{A,A_{\rg}}(A)$ vanishes on $E^0\minus E^0_{\rg}$,
the restriction map $f\mapsto f|_{E^0_{\rg}}$ gives an embedding of this subalgebra into $M_A(A_{\rg})$,
and we define $\nu_{\rg}'$ so that the diagram
\[
\xymatrix{
B_{\rg} \ar[rr]^-{\nu_{\rg}} \ar[dr]_{\nu_{\rg}'}
&&M_{A,A_{\rg}}(A) \ar@{_(->}[dl]
\\
&M_A(A_{\rg}).
}
\]
commutes.

The desired Cuntz-Pimsner covariance of $(\tau,\pi)$ follows from commutativity of the left triangle of \eqref{cuntz pimsner} (see Remark~\ref{regular}), which we will deduce from that of the outer square and the other three triangles.

\subsection*{The top triangle}
The top triangle of \eqref{cuntz pimsner} coincides with the lower left triangle of the diagram
\begin{equation}\label{nu prime}
\xymatrix@C+30pt@R+30pt{
B_{\rg} \ar[rr]^{\nu_{\rg}} \ar[dr]^{\nu_{\rg}'} \ar[ddr]_{\pi_{\rg}}
&&M_{A,A_{\rg}}(A) \ar@{_(->}[dl] \ar[ddl]^{\bar{k_A}|}
\\
&M_A(A_{\rg}) \ar[d]^(.4){\bar{k_{A,\rg}}}
\\
&M_A(C^*(E)),
}
\end{equation}
in which $\bar{k_A}|$ denotes the restriction of $\bar{k_A}:M(A)\to M_A(C^*(E))$ to the subalgebra $M_{A,A_{\rg}}(A)$.
Once we have verified that $\nu_{\rg}$ maps $B_{\rg}$ into $M_{A,A_{\rg}}(A)$, it will suffice to observe that all the other triangles in \eqref{nu prime} commute:
the top triangle of \eqref{nu prime} commutes by definition of $\nu_{\rg}'$,
the outer triangle commutes by definition of $\pi_{\rg}$, 
and the lower-right triangle commutes by a routine check of the definitions.

We must show that if $f\in C_0(F^0_{\rg})$ and $g\in C_0(E^0)$ then
\[
\nu(f)g=(f\circ q)g\in C_0(E^0_{\rg}).
\]
We have $\nu(f)g\in C_0(E^0)$ since $\nu(f)$ is bounded and $g\in C_0(E^0)$.
We must show that $\nu(f)g=0$ on $E^0\minus E^0_{\rg}$,
equivalently if $\nu(f)g$ is nonzero at an element $v\in E^0$, then we must have $v\in E^0_{\rg}$.
So, we must show that $v$ is in the image of the range map $\tilde r:E^1\to E^0$, and that there is a neighborhood $U$ of $v$ such that $\tilde r\inv(U)$ is compact, i.e., $v\in E^0_{\text{fin}}$.

By Lemma~\ref{husemoller}, we may assume that $E^1=r^*(E^0)$, i.e., that 
the diagram
\[
\xymatrix{
E^0 \ar[d]_q
&E^1 \ar[l]_-{\tilde r} \ar[d]^q
\\
F^0
&F^1 \ar[l]^r
}
\]
coincides with a pull-back
\[
\xymatrix@C+30pt{
E^0 \ar[d]_q
&E^1=r^*(E^0) \ar[l]_-{\pi_2} \ar[d]^{\pi_1}
\\
F^0
&F^1 \ar[l]^r
}
\]

Since $\nu(f)(v)\ne 0$ and $f\in C_0(F^0_{\rg})$, we have $q(v)\in F^0_{\rg}$.
Thus we can choose $e\in F^1$ such that $r(e)=q(v)$.
Then $(e,v)\in E^1=r^*(E^0)$ and $v=\pi_2(e,v)$,
showing that $v$ is in the image of the range map on $E^1$.

To see that $v\in E^0_{\text{fin}}$, start with a compact neighborhood $U$ of $v$ in $E^0$, and put $V=q(U)$, which is a compact neighborhood of $q(v)$ in $F^0$.
Shrink $U$ if necessary so that $W:=r\inv(V)$ is compact in $F^1$.
We will show that $\pi_2\inv(U)$ is compact.
Suppose $\pi_2(e,u)\in U$, i.e., $u\in U$ and $r(e)=q(u)$.
Then $e\in W$.
Thus
\[
(e,u)\in (W\times U)\cap r^*(E^0),
\]
which is compact.
We have verified the inclusion $\nu_{\rg}(B_{\rg})\subset M_{A,A_{\rg}}(A)$.

\subsection*{The right triangle}

Cuntz-Pimsner covariance of the canonical homomorphism $(k_X,k_A)$ is expressed by the commutative diagram
\[
\xymatrix@C+30pt{
&A_{\rg} \ar[dl]_{k_{A,\rg}} \ar[dd]^{\varphi_{A,\rg}}
\\
C^*(E)
\\
&\KK(X) \ar[ul]^{k_X^{(1)}},
}
\]
and it follows from $A$-strict continuity  that the right triangle of \eqref{cuntz pimsner} commutes.

\subsection*{The bottom triangle}

For $\xi,\eta\in Y$ we have
\begin{align*}
\bar{k_X^{(1)}}\bigl(\mu^{(1)}(\theta_{\xi,\eta})\bigr)
&=\bar{k_X^{(1)}}\bigl(\mu(\xi)\mu(\eta)^*\bigr)
\\&=\bar{k_X}(\mu(\xi))\bar{k_X}(\mu(\eta))^*
&&\text{(by Corollary~\ref{C=A toeplitz})}
\\&=\tau(\xi)\tau(\eta)^*
\\&=\tau^{(1)}(\theta_{\xi,\eta}).
\end{align*}

\subsection*{The outer square}

We need to show that if $f\in C_0(F^0_{\rg})$, then
\[
\bar{\varphi_{A,\rg}}\circ \nu_{\rg}'(f)=\mu^{(1)}\circ \varphi_{B,\rg}(f).
\]
As in the commutative diagram~\eqref{nu prime} we can show
\[
\bar{\phi_{A,\rg}}\circ \nu_{\rg}'(f)
=\bar{\phi_A}|\circ \nu_{\rg}(f)
=\bar{\phi_A}(\nu(f)).
\]
Also, $\varphi_{B,\rg}(f)=\varphi_B(f)$.
Thus we must show
\[
\bar{\varphi_A}(\nu(f))=\mu^{(1)}(\varphi_B(f)).
\]

Since $f\circ r\in C_0(F^1)$, we can approximate $f\circ r$ uniformly by a function $h\in C_c(F^1)$, and by \cite[Lemmas~1.15 and 1.16]{Ka1} there are $\xi_i,\eta_i\in C_c(F^1)$ for $1=1\dots n$ such that
\begin{itemize}
\item $\displaystyle h=\sum_1^n\xi_i\bar{\eta_i}$;

\item $\displaystyle\pi_F(h)=\sum_1^n\theta_{\xi_i,\eta_i}$;

\item for each $i$ we have
$\xi_i(e)\eta_i(e')=0$
for all $e\ne e'$ in $F^1$ with $s(e)=s(e')$.
\end{itemize}
For $\zeta\in C_c(E^1)$ and $e\in E^1$ we have
\begin{align*}
\bigl(\mu^{(1)}(\theta_{\xi_i,\eta_i})\zeta\bigr)(e)
&=\bigl(\mu(\xi_i)\mu(\eta_i)^*\zeta\bigr)(e)
\\&=\xi_i(q(e))\bigl(\mu(\eta_i)^*\zeta\bigr)(s(e))
\\&=\xi_i(q(e))\sum_{s(e')=s(e)}\bar{\eta_i(q(e'))}\zeta(e')
\\&=\xi_i(q(e))\sum_{s(q(e'))=s(q(e))}\bar{\eta_i(q(e'))}\zeta(e')
\\&=\xi_i(q(e))\bar{\eta_i(q(e))}\zeta(e).
\end{align*}
Thus
\begin{align*}
\bigl(\mu^{(1)}(\pi_F(h)\zeta\bigr)(e)
&=\sum_1^n\xi_i(q(e))\bar{\eta_i(q(e))}\zeta(e)
\\&=h(q(e))\zeta(e).
\end{align*}
By our choice of $h$ we have
\begin{align*}
\bigl(\nu(f)\cdot \zeta\bigr)(e)
&=\nu(f)(r(e))\zeta(e)
\\&=f\circ q(r(e))\zeta(e)
\\&=f\circ r(q(e))\zeta(e)
\\&\approx h(q(e))\zeta(e).
\end{align*}
Let $K:=\supp\zeta$. The above approximation is uniform over $\zeta$ in 
the subspace
\[
C_K(E^1):=\{\xi\in C_c(E^1):\supp\xi\subset K\},
\]
so 
by the elementary Lemma~\ref{equivalent norm} below
we have
\[
\nu(f)\cdot \zeta\approx \mu^{(1)}(\pi_F(h))\zeta\midtext{in}X.
\]
Now, by definition we have
\[
\nu(f)\cdot \zeta=\bar{\varphi_A}(\nu(f))\zeta,
\]
so we get
\[
\bar{\varphi_A}(\nu(f))\zeta\approx \mu^{(1)}(\pi_F(h))\zeta.
\]
Since $f\circ r\approx h$, we have $\varphi_B(f)\approx \pi_F(h)$, so
\begin{align*}
\mu^{(1)}(\varphi_B(f))\zeta
&\approx \mu^{(1)}(\pi_F(h))\zeta
\\&\approx \bar{\varphi_A}(\nu(f))\zeta.
\end{align*}
Thus $\|\mu^{(1)}(\varphi_B(f))\zeta-\bar{\varphi_A}(\nu(f))\zeta\|$ is arbitrarily small, so 
we must have
\[
\mu^{(1)}(\varphi_B(f))\zeta=\bar{\varphi_A}(\nu(f))\zeta.
\]
Since 
$\zeta$ was an arbitrary element of $C_c(E^1)$, which is dense in $X$, we have shown that the outer square commutes.

In the above argument we used the following lemma:

\begin{lem}\label{equivalent norm}
For any compact subset $K\subset E^1$,
on the subspace $C_K(E^1)$ of $X$ the Hilbert-module norm from $X$ is equivalent to the uniform norm.
\end{lem}

\begin{proof}
The uniform norm on $C_K(E^1)$ is of course smaller than the Hilbert-module norm from $X$.
On the other hand, by Lemma~\ref{compactly trivial} (ii) we can let $d$ be an upper bound for the cardinalities of the intersections $K\cap s\inv(v)$ for $v\in E^0$. Then for any $v\in E^0$ we have
\begin{align*}
\sum_{s(e)=v}|\xi(e)|^2
\le d\|\xi\|_u^2,
\end{align*}
where $\|\cdot\|_u$ denotes the uniform norm.
\end{proof}

It now follows that the left triangle of \eqref{cuntz pimsner} commutes, and hence that the Toeplitz representation $(\tau,\pi):Y\to M(C^*(E))$ is Cuntz-Pimsner covariant.

\end{step}

\begin{step}
To see that $\tau\times\pi:C^*(F)\to M(C^*(E))$ is injective,  we apply the Gauge-Invariant Uniqueness Theorem \cite[Theorem~4.5]{Ka1}. For this we need to show that:
\begin{enumerate}
\item $\pi:B\to M(C^*(E))$ is injective, and

\item for all $z\in\T$ there is an automorphism $\sigma_z$ of $(\tau\times\pi)(C^*(F))$ such that
$\sigma_z(\tau(\xi))=z\tau(\xi)$ for $\xi\in C_c(F^1)$ and
$\sigma_z(\pi(f))=\pi(f)$ for $f\in C_c(F^0)$.
\end{enumerate}

For (1), just note that both $\nu:B\to M(A)$ and $\bar{k_A}:M(A)\to M(C^*(E))$ are injective (the latter because $k_A:A\to C^*(E)$ is), and hence so is $\bar{k_A}\circ\nu$.

For (2), we extend the gauge automorphism $\gamma_z$ of $C^*(E)$ to an automorphism $\bar{\gamma_z}$ of the multiplier algebra $M(C^*(E))$, and note that by strict and $A$-strict continuity we have
\begin{itemize}
\item $\bar{\gamma_z}\circ \bar{k_X}=z\bar{k_X}$ on $M_A(X)$, and

\item $\bar{\gamma_z}\circ \bar{k_A}=\bar{k_A}$ on $M(A)$.
\end{itemize}
\end{step}

\begin{step}
Finally, we need to show that the image $(\tau\times\pi)(C^*(F))$ coincides with the generalized fixed-point algebra of $C^*(E)$ under the action $\alpha$ of $G$.
We have an action $\beta$ of $G$ on $C_0(E^0)$ corresponding to the free and proper action on $E^0$, and a $G$-equivariant nondegenerate homomorphism $k_A:A=C_0(E^0)\to C^*(E)\subset M(C^*(E))$, so we can form the generalized fixed-point algebra
\[
C^*(E)^\alpha=
\fix(C^*(E),\alpha,k_A),
\]
and we will show that it coincides with $(\tau\times\pi)(C^*(F))$.
We begin with the inclusion
\[
\fix(C^*(E),\alpha,k_A)\subset(\tau\times\pi)(C^*(F)),
\]
which will occupy us for some time.

For $n\in\N=\{0,1,2,\dots\}$ let $E^n$ denote the space of paths in $E$ of length $n$ (with the relative product topology). Then by \cite{Ka1}, $E^n$ is naturally a topological graph with vertex space $E^0$, and the associated $A$-correspondence $\HH(E^n)$ is isomorphic to the $n$-fold balanced tensor product
\[
X\otimes_A\cdots\otimes_A X,
\]
which we denote simply by $X^n$.
We let $k_X^n:X^n\to C^*(E)$ be the representation corresponding  to the $n$-fold tensor power of $k_X$.
Then
\[
C^*(E)=\clspn\{k_X^n(C_c(E^n))k_X^m(C_c(E^m))^*:n,m\in\N\}.
\]
Our hypotheses on $E$, namely that $G$ acts freely and properly, carry over to $E^n$.
We use the same notational conventions as for the quotient graph $F=E/G$,
and we write $\tau^n=\bar{k_X^n}\circ\mu$.

Now let $f,h\in C_c(E^0)$ and $a\in C^*(E)$. We must show that
\[
\Phi\bigl(k_A(f)ak_A(h)\bigr)\in (\tau\times\pi)(C^*(F)).
\]
We may assume that
\[
a=k_X^n(\xi)k_Y^m(\eta)^*,
\]
where $\xi\in C_c(E^n)$ and $\eta\in C_c(E^m)$.
By the elementary Lemma~\ref{factor} below, 
we can find 
finitely many functions
$\xi_i\in C_c(F^n),
\eta_j\in C_c(F^m),
f_i,g_j\in C_c(E^0)$ such that
\[
f\cdot \xi\approx \sum_i \mu(\xi_i)\cdot f_i
\midtext{and}
\eta\approx \sum_j g_j\cdot \mu(\eta_j).
\]
Then for each $i,j$, again by Lemma~\ref{factor} there exist finitely many
$f_{ijl}\in C_c(E^0),\eta_{ijl}\in C_c(F^m)$ such that
\[
\mu(\eta_j)\cdot \bar{f_i}\approx \sum_l f_{ijl}\cdot \mu(\eta_{ijl}).
\]
We have
\begin{align*}
k_A(f)a
&=k_A(f)k_X^n(\xi)k_X^m(\eta)^*
\\&=k_X^n(f\cdot \xi)k_X^m(\eta)^*
\\&\approx \sum_{ij} k_X^n\bigl(\mu(\xi_i)\cdot f_i\bigr)k_X^m\bigl(g_j\cdot \mu(\eta_j)\bigr)^*
\\&=\sum_{ij}\bar{k_X^n}(\mu(\xi_i))k_A(f_i)\bar{k_X^m}(\mu(\eta_j))^*k_A(\bar{g_j})
\\&=\sum_{ij}\bar{k_X^n}(\mu(\xi_i))\bigl(\bar{k_X^m}(\mu(\eta_j))k_A(\bar{f_i})\bigr)^*k_A(\bar{g_j})
\\&=\sum_{ij}\bar{k_X^n}(\mu(\xi_i))\Bigl(k_X^m\bigl(\mu(\eta_j)\cdot \bar{f_i}\bigr)\Bigr)^*k_A(\bar{g_j})
\\&\approx \sum_{ijl}\bar{k_X^n}(\mu(\xi_i))k_X^m\bigl(f_{ijl}\cdot \mu(\eta_{ijl})\bigr)^*k_A(\bar{g_j})
\\&=\sum_{ijl}\bar{k_X^n}(\mu(\xi_i))\tau^m(\eta_{ijl})^*k_A(\bar{f_{ijl}g_j}).
\end{align*}
Now, we can choose $h'\in C_c(E^0)$ such that
\[
h'f=f\midtext{and}h'\cdot \mu(\xi_i)=\mu(\xi_i)\midtext{for all}i.
\]
Then we have
\begin{align*}
\Phi\bigl(k_A(f)ak_A(h)\bigr)
&=\Phi\bigl(k_A(h')k_A(f)ak_A(h)\bigr)
\\&\approx\sum_{ijl}\Phi\bigl(k_A(h')\bar{k_X^n}(\mu(\xi_i))\tau^m(\eta_{ijl})^*k_A(\bar{f_{ijl}g_j})\bigr)
\\&=\sum_{ijl}\Phi\bigl(\bar{k_X^n}(\mu(\xi_i))\tau^m(\eta_{ijl})^*k_A(\bar{f_{ijl}g_j})\bigr)
\\&=\sum_{ijl}\tau^n(\xi_i)\tau^m(\eta_{ijl})^*\Phi\bigl(k_A(\bar{f_{ijl}g_j})\bigr).
\end{align*}
Since $\tau^n(\xi_i)\tau^m(\eta_{ijl})^*\in (\tau\times\pi)(C^*(F))$ and
\begin{align*}
\Phi\circ k_A(C_c(E^0))
&=\bar{k_A}\circ\Phi^\beta(C_c(E^0))
\\&\subset \bar{k_A}\circ\nu(C_c(F^0))
\\&=\pi(C_c(F^0))
\\&\subset (\tau\times\pi)(C^*(F)),
\end{align*}
it follows that $\Phi(k_A(f)ak_A(h))\in (\tau\times\pi)(C^*(F))$.

The above argument used the following elementary lemma:

\begin{lem}\label{factor}
In the norm of $X$,
\[
C_c(E^1)\subset\clspn\bigl\{\mu(C_c(F^1))\cdot C_c(E^0)\bigr\}
\cap\clspn\bigl\{C_c(E^0)\cdot \mu(C_c(F^1))\bigr\}.
\]
\end{lem}

\begin{proof}
By Theorem~\ref{structure} we may assume that $E^1=s^*(E^0)$.
Let $\theta:s^*(E^0)\to r^*(E^0)$ be the isomorphism of Theorem~\ref{structure}.
Note that
\[
\mu(\eta)\cdot f=(\eta\otimes f)|_{s^*(E^0)}
\midtext{and}
f\cdot \mu(\eta)=(\eta\otimes f)\circ\theta,
\]
where $(\eta\otimes f)(e,v)=\eta(e)f(v)$ for $(e,v)\in F^1\times E^0$.

Let $\xi\in C_c(E^1)$.
Then there is $\xi'\in C_c(F^1\times E^0)$ such that
\[
\xi=\xi'|_{s^*(E^0)},
\]
because $s^*(E^0)$ is a closed subset of $F^1\times E^0$.
Then there are finitely many $\eta_i\in C_c(F^1),f_i\in C_c(E^0)$ such that
\[
\xi'\approx\sum_i(\eta_i\otimes f_i).
\]
Thus
\[
\xi\approx\sum_i(\eta_i\otimes f_i)|_{s^*(E^0)}=\sum_i\mu(\eta_i)\cdot f_i.
\]
This shows the  inclusion in the first set.
For the other one,
since $\xi\circ\theta\inv\in C_c(r^*(E^0))$,
there is $\zeta\in C_c(F^1\times E^0)$ such that
\[
\xi\circ\theta\inv=\zeta|_{r^*(E^0)},
\]
and then as above we can approximate
\[
\zeta\approx\sum_i(\eta_i\otimes f_i),
\]
so that
\[
\xi\circ\theta\inv\approx\sum_i(\eta_i\otimes f_i)|_{r^*(E^0)},
\]
and hence
\[
\xi\approx\sum_i(\eta_i\otimes f_i)\circ\theta=\sum_if_i\cdot\mu(\eta_i).
\qedhere
\]
\end{proof}

We have shown that
$\fix(C^*(E),\alpha,k_A)\subset(\tau\times\pi)(C^*(F))$,
and we turn to the opposite inclusion.
We need the following easy lemma.

\begin{lemma}
\label{lift}
For all $f\in C_c(F^0)$ there exists $h\in C_c(E^0)$ such that
$\nu(f)=\Phi^\beta(h)$.
\end{lemma}

\begin{proof}
Put $M=\supp f$, a compact subset of $F^0$.
Since the quotient map $q:E^0\to F^0$ is a continuous open surjection, it is a standard fact that there exists a compact set $K\subset E^0$ such that $q(K)=M$. Choose $h_1\in C_c(E^0)$ such that $h_1(v)\ne 0$ for all $v\in K$. Then $\Phi^\beta(h_1)\ne 0$ on $q\inv(M)$, and there is a unique $h_2\in C_c(F^0)$ such that $\Phi^\beta(h_1)=\nu(h_2)$.
Then $h_2\ne 0$ on $M$, so there exists $h_3\in C_c(F^0)$ such that
\[
h_3=\frac{f}{h_2}\midtext{on}M.
\]
Then $f=h_2h_3$, so
\begin{align*}
\nu(f)
&=\nu(h_2)\nu(h_3)
\\&=\Phi^\beta(h_1)\nu(h_3)
\\&=\Phi^\beta\bigl(h_1\nu(h_3)\bigr),
\end{align*}
so we can take $h=h_1\nu(h_3)$.
\end{proof}

To finish, it suffices to show that $\tau(\xi)$ and $\pi(f)$ are in $\fix(C^*(E),\alpha,k_A)$ for all $\xi\in C_c(F^1)$ and $f\in C_c(F^0)$.
For $\xi$, choose $h\in C_c(F^0)$ such that $\xi=\xi\cdot h$.
By Lemma~\ref{lift} we can choose $h_1\in C_c(E^0)$ such that $\nu(h)=\Phi^\beta(h_1)$.
We have
\begin{align*}
\tau(\xi)
&=\tau(\xi\cdot h)
\\&=\tau(\xi)\pi(h)
\\&=\tau(\xi)\bar{k_A}(\nu(h))
\\&=\tau(\xi)\bar{k_A}(\Phi^\beta(h_1))
\\&=\tau(\xi)\Phi(k_A(h_1))
\\&=\Phi\bigl(\tau(\xi)k_A(h_1)\bigr)
\\&=\Phi\bigl(\bar{k_X}(\mu(\xi))k_A(h_1)\bigr)
\\&=\Phi\bigl(k_X(\mu(\xi)\cdot h_1)\bigr).
\end{align*}
Now, by Corollary~\ref{compactly trivial} we have $\mu(\xi)\cdot h_1\in C_c(E^1)$, so we can choose $h_2\in C_c(E^0)$ such that
\[
\mu(\xi)\cdot h_1=h_2\cdot \mu(\xi)\cdot h_1,
\]
and then we can factor $h_2=h_3h_4$ with $h_3,h_4\in C_c(E^0)$. Then
\begin{align*}
\tau(\xi)
&=\Phi\Bigl(k_X\bigl(h_3h_4\cdot \mu(\xi)\cdot h_1\bigr)\Bigr)
\\&=\Phi\Bigl(k_A(h_3)k_X\bigl(h_4\cdot \mu(\xi)\bigr)k_A(h_1)\Bigr).
\end{align*}
Since $h_4\cdot \mu(\xi)\in C_c(E^1)$ (again by Corollary~\ref{compactly trivial}), we have
\[
k_X\bigl(h_4\cdot \mu(\xi)\bigr)\in C^*(E),
\]
and therefore $\tau(\xi)\in \fix(C^*(E),\alpha,k_A)$.

Our strategy for $f\in C_c(F^0)$ is similar, but things are somewhat easier this time: factor $f=fh_1$ with $h_1\in C_c(F^0)$, then choose $h_2\in C_c(E^0)$ such that $\nu(h_1)=\Phi^\beta(h_2)$, then factor $h_2=h_3h_4h_5$ with $h_i\in C_c(E^0)$. We have
\begin{align*}
\pi(f)
&=\pi(fh_1)
\\&=\pi(f)\pi(h_1)
\\&=\pi(f)\bar{k_A}(\nu(h_1))
\\&=\pi(f)\bar{k_A}(\Phi^\beta(h_2))
\\&=\pi(f)\Phi(k_A(h_3h_4h_5))
\\&=\Phi\bigl(\pi(f)k_A(h_3h_4h_5)\bigr)
\\&=\Phi\bigl(\bar{k_A}(\nu(f))k_A(h_3h_4h_5)\bigr)
\\&=\Phi\bigl(k_A(h_3)k_A(fh_3)k_A(h_5)\bigr),
\end{align*}
which is in $\fix(C^*(E),\alpha,k_A)$ because $k_A(fh_3)\in C^*(E)$.
\qedhere
\end{step}
\end{proof}

Let $E$ be a topological graph and let $c:E^1\to G$ be a continuous map. Recall that $G$ acts freely and properly on $G\times_cE$.  Since $(G\times_cE)/G \cong E$,
we have by Theorem \ref{morita}:

\begin{corollary}
The $C^*$-algebras $C^*(E\times_cG)\rtimes_r G$ and  $C^*(E)$ are strongly Morita equivalent. In particular, for a finitely generated locally compact group $G$ with generators $S=\{h_1,h_2,...,h_n\}$ and Cayley graph $E(G,S)$, we get that $C^*(E(G,S))\rtimes_r G$ is strongly Morita equivalent to the Cuntz algebra ${\mathcal O}_n$.
\end{corollary}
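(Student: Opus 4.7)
The first assertion is essentially immediate from Theorem~\ref{morita}. I would recall from the example immediately following Theorem~\ref{structure} that $G$ acts freely and properly on the skew product $E\times_c G$ by $g\cdot(x,h)=(x,gh)$, with source and range maps equivariant, and that the projection $(x,h)\mapsto x$ induces an isomorphism of topological graphs $(E\times_c G)/G\cong E$. Applying Theorem~\ref{morita} to this free and proper action then yields that $C^*(E\times_c G)\rtimes_r G$ is strongly Morita equivalent to $C^*((E\times_c G)/G)\cong C^*(E)$.

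For the second (particular) assertion, my plan is to exhibit the Cayley graph $E(G,S)$ as a concrete skew product, and then identify the base $C^*$-algebra with $\OO_n$. Let $F$ be the topological graph with a single vertex $v$ and $n$ loops $e_1,\dots,e_n$, and let $c:F^1\to G$ be the cocycle defined by $c(e_i)=h_i$. As pointed out in the skew product example in Section~2, one checks directly from the definitions that the map $(e_i,g)\mapsto (h_i,g)$ (together with $g\mapsto g$ on vertices) gives an isomorphism of topological graphs $F\times_c G\cong E(G,S)$. The first assertion then gives that $C^*(E(G,S))\rtimes_r G$ is strongly Morita equivalent to $C^*(F)$.

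It remains to identify $C^*(F)\cong \OO_n$. Since $F^0=\{v\}$ is a single point, we have $C_0(F^0)=\C$ with generator $p_v=1$, and $F^0_{\rg}=F^0$ because $r$ is proper and surjective onto a one-point space. The presentation of $C^*(F)$ via generators and relations (as recalled in the discrete-graph example in Section~2) yields $n$ partial isometries $t_{e_1},\dots,t_{e_n}$ with mutually orthogonal ranges satisfying $t_{e_i}^*t_{e_i}=p_{s(e_i)}=1$ and the Cuntz--Krieger relation
\[
1=p_v=\sum_{i=1}^n t_{e_i}t_{e_i}^*,
\]
which are precisely the defining relations of $\OO_n$. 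Hence $C^*(F)\cong\OO_n$, and the Morita equivalence $C^*(E(G,S))\rtimes_r G\sim_{\text{M}}\OO_n$ follows.

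No part of this is subtle once Theorem~\ref{morita} is in hand; the only point meriting verification is the explicit identification $F\times_c G\cong E(G,S)$, and this is a routine check from the definitions of skew product and Cayley graph, with the cocycle chosen precisely so that the range map $(e_i,g)\mapsto (v,gc(e_i))=(v,gh_i)$ matches $(h_i,g)\mapsto gh_i$ under $v\leftrightarrow g$.
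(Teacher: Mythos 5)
Your proposal is correct and follows essentially the same route as the paper: the first assertion is obtained by applying Theorem~\ref{morita} to the free and proper $G$-action on $E\times_cG$ with quotient $E$, and the second by realizing the Cayley graph $E(G,S)$ as the skew product of the one-vertex, $n$-loop graph $F$ by the cocycle $c(e_i)=h_i$ (exactly as noted in the skew-product example of Section~2) and identifying $C^*(F)\cong\OO_n$. The verification of the isomorphism $F\times_cG\cong E(G,S)$ and of the Cuntz relations for $C^*(F)$ is routine and matches the paper's intent.
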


\begin{remark}
 If $G$ is abelian, there is an action $\alpha^c$ of $\hat{G}$ on $C^*(E)$ such that
\[(\alpha^c_\chi\xi)(e)=\langle \chi, c(e)\rangle \xi(e)\]
for $\xi\in C_c(E^1)$ and $\chi\in\hat{G}$.
Then \[C^*(E)\rtimes_{\alpha^c} \hat{G}\cong C^*(E\times_cG).\]
For $G$ non-abelian, we need to use the notion of coaction.
This will be investigated in \cite{KQR2}.
\end{remark}

\begin{appendix}

\section{multipliers of correspondences}
\label{A multiplier}

In Section~\ref{actions}
we make extensive use of the \emph{multiplier bimodules} of
\cite[Chapter~1]{EKQR}. The bimodules in \cite{EKQR} were called
\emph{right-Hilbert $A-B$ bimodules}, which are also commonly referred
to as \emph{$A-B$ correspondences}, where $A$ and $B$ are
$C^*$-algebras. In most of this paper we are interested in
correspondences over a single $C^*$-algebra, but occasionally we need
the added generality of allowing $A$ and $B$ to be possibly different.
Warning: there is a nondegeneracy issue; in the literature, $A-B$
correspondences are often (usually?) allowed to be degenerate, in the
sense that the closed span of $A\cdot X$ might be smaller than $X$,
but in \cite{EKQR} the right-Hilbert bimodules are required to be
nondegenerate. This will cause no problem here, though, because the
correspondences associated to topological graphs are all
nondegenerate, so that the results of \cite{EKQR} may be applied
freely.

To emphasize, throughout this appendix we will assume that all our correspondences are nondegenerate.

$A$ and $B$ are the \emph{coefficient algebras} of the correspondence
$X$, and we will sometimes denote the correspondence by ${}_AX_B$ to
indicate what the coefficient algebras are. Note that a Hilbert
$B$-module is also a $\C-B$ correspondence.

We recall briefly the theory of multiplier bimodules; see
\cite[Chapter~1]{EKQR} for more details. A \emph{multiplier} of 
${}_AX_B$ is by definition an adjointable map from $B$ to $X$, and
$M(X)$ denotes the set of all multipliers of $X$. Thus by definition
we have $M(X)=\LL(B,X)$. Note that the definition of $M(X)$ does not
involve the left $A$-module structure at all; it is, in
essence, a Hilbert $B$-module concept. However, $M(X)$ is an
$\LL(X)-M(B)$ correspondence, hence an $M(A)-M(B)$ correspondence
because of our assumption that $X$ is nondegenerate
as a left $A$-module. The restriction of the right $M(B)$-module
action to $B$ is just the evaluation of adjointable maps; i.e., if
$m\in M(X)$ and $b\in B$ then we write $m\cdot b$ to mean the value of
the adjointable map $m:B\to X$ at the element $b$ of its domain. There
is an embedding of $X$ in $M(X)$ such that the adjointable map
associated to $\xi\in X$ is
\[
b\mapsto \xi\cdot b\midtext{for}b\in B.
\]
$M(X)$ has both an $\LL(X)$-valued inner product
\[
{}_{\LL(X)}\<m,n\>=mn^*\midtext{for}m,n\in M(X)
\]
and an $M(B)$-valued inner product
\[
\<m,n\>_{M(B)}=\<m,n\>=m^*n.
\]
If either $m$ or $n$ lies in $X$, then
\[
{}_{\LL(X)}\<m,n\>\in \KK(X)\midtext{and}\<m,n\>\in B,
\]
because, more generally, composing an adjointable operator on either
side with a compact operator gives a compact operator. The restriction
of the left inner product ${}_{\LL(X)}\<\cdot,\cdot\>$ to $X$ gives
the rank-one operators:
\[
{}_{\LL(X)}\<\xi,\eta\>={}_{\KK(X)}\<\xi,\eta\>=\theta_{\xi,\eta},
\]
and the restriction of the right inner product to $X$ is the given
one. The left and right module actions of multipliers of the
coefficient algebras $A$ and $B$, respectively, on an element $m\in
M(X)$ are as follows: if $n\in M(A)$ then $n\cdot m$ is the element of
$M(X)$ defined by
\[
(n\cdot m)\cdot b=n\cdot (m\cdot b)\midtext{for}b\in B,
\]
and if $n\in M(B)$ then $m\cdot n$ is defined by
\[
(m\cdot n)\cdot b=m\cdot (nb)\midtext{for}b\in B.
\]
Of course, if $n\in B$ then $m\cdot n\in X$.

The \emph{strict topology} on $M(X)$ is generated by the seminorms
\[
m\mapsto \|T\cdot m\|\midtext{and}m\mapsto \|m\cdot b\|\midtext{for}T\in\KK(X),b\in B.
\]
The operations on the $M(A)-M(B)$ correspondence $M(X)$ are separately strictly continuous, and
$M(X)$ is the strict completion of $X$.

Given another correspondence ${}_CY_D$
and homomorphisms $\pi:A\to M(C)$ and $\theta:B\to M(D)$,
a linear map $\psi:X\to M(Y)$ is a \emph{$\pi-\theta$ compatible correspondence homomorphism} if the two conditions
\begin{align*}
\<\psi(\xi),\psi(\eta)\>_{M(D)}&=\theta(\<\xi,\eta\>_B);
\\
\psi(a\cdot \xi)&=\pi(a)\cdot \psi(\xi)
\end{align*}
hold.
Note that $\psi$ will automatically satisfy
\[
\psi(\xi\cdot b)=\psi(\xi)\theta(b).
\]
$\pi$ and $\theta$ are the \emph{coefficient} homomorphisms of $\psi$,
and sometimes we write ${}_\pi\psi_\theta$ for the correspondence
homomorphism to indicate what the coefficient homomorphisms are.

A correspondence homomorphism ${}_\pi\psi_\theta$ is \emph{nondegenerate} if $\pi$ and $\theta$ are nondegenerate and
\[
\clspn\{\psi(X)\cdot D\}=Y,
\]
in which case ${}_\pi\psi_\theta$ extends uniquely to a strictly
continuous homomorphism
\[
{}_{\bar\pi}\bar\psi_{\bar\theta}:
{}_{M(A)}M(X)_{M(B)}\to {}_{M(C)}M(Y)_{M(D)}.
\]
(In fact, as for $C^*$-algebras, the extension is unique as just a correspondence homomorphism, and is automatically strictly continuous, although we will not need this fact.)
If $A=B$, $C=D$, and $\pi=\theta$ we will write
\[
(\psi,\pi)={}_\pi\psi_\theta,
\]
and if
in addition $Y=C$ is a correspondence over itself in the canonical
way, and if both $\psi$ and $\pi$ map into $C$ (and not just $M(C)$),
then $(\psi,\pi)$ reduces to the familiar concept of a \emph{Toeplitz
representation} of the $A$-correspondence $X$ into $C$.

If
\[
{}_\pi\psi_\theta:{}_AX_B\to M({}_CY_D)
\]
is a nondegenerate homomorphism, then there is a unique nondegenerate homomorphism
\[
\psi^{(1)}:\KK(X)\to M(\KK(Y))=\LL(Y)
\]
such that
\[
\psi^{(1)}(\theta_{\xi,\eta})=\psi(\xi)\psi(\eta)^*\midtext{for}\xi,\eta\in X,
\]
and then, as usual, by nondegeneracy $\psi^{(1)}$ extends uniquely to a homomorphism
\[
\bar{\psi^{(1)}}:\LL(X)\to \LL(Y).
\]
Note that even if $(\psi,\pi)$ is degenerate, there is still a homomorphism $\psi^{(1)}:\KK(X)\to \KK(M(Y))\subset M(\KK(Y))$, but then there may not be an extension $\bar{\psi^{(1)}}$ to $\LL(X)$ (although we will define a partial
extension, under additional hypotheses, in Corollary~\ref{C=A toeplitz}).

\subsection*{$C$-multipliers}

Unfortunately, we must deal with possibly degenerate homomorphisms of correspondences, and for this reason we have been lead to develop a generalization of the
``$C$-multipliers'' of \cite[Section~1.4]{EKQR}. 

\begin{example}
\label{degenerate}
We can see why degeneracy is an issue for us already in the case of (discrete) directed graphs: let $E$ be the directed graph with a single non-loop edge, i.e.,
\[
\xymatrix{
u\bullet
&\bullet v
\ar[l]_e
}
\]
We have:
\begin{itemize}
\item $C^*(E)=M_2$;

\item $X=\HH(E)=\C\cdot \Chi_{\{e\}}$;

\item $k_X(\Chi_{\{e\}})=\smtx{0&1\\0&0}$;

\item $k_X(X)C^*(E)=\smtx{*&*\\0&0}$,
\end{itemize}
so the canonical Toeplitz representation $(k_X,k_A)$ is degenerate (where $A=\C^2$).
\end{example}

\begin{remark}
Of course, there are situations in which degeneracy is not an issue; for example,
if the
correspondence $X$ is actually an imprimitivity bimodule (so that
$\varphi_A:A\to \KK(X)$ is an isomorphism), then every Cuntz-Pimsner covariant Toeplitz
representation $(\psi,\pi)$ of $X$ to a $C^*$-algebra $B$ for which $\pi$ is
nondegenerate is automatically nondegenerate as a correspondence
homomorphism, i.e., $\psi(X)B=B$, by \cite[Lemma~5.1]{KQR3}.
Note also that whenever $E$ is a topological graph with $s$ and $r$ both surjective and
$r$ proper, then $k_X:X\to C^*(E)$ will be nondegenerate, hence will extend uniquely to the multiplier bimodule $M(X)$.
\end{remark}

The following
is similar to that in \cite{EKQR}.
To prepare for the general framework introduced below, we begin with some prefatory comments.
First, to establish context, we recall that the ``$C$-multipliers'' in \cite{EKQR} involved tensor products: if $A$ and $C$ are $C^*$-algebras, then \cite{EKQR} defined $M_C(A\otimes C)$ to be all multipliers of $A\otimes C$ that multiply $1\otimes C$ into $A\otimes C$. Here we need a generalization to situations where there are no tensor products.


\begin{defn}
Let $\kappa:C\to M(A)$ be a nondegenerate homomorphism.
\begin{enumerate}
\item A \emph{$C$-multiplier} of $A$ is a multiplier $m\in M(A)$ such that
\[
\kappa(C) m\cup m\kappa(C)\subset A,
\]
and $M_C(A)$ denotes the set of all $C$-multipliers of $A$.

\item The \emph{$C$-strict} topology on $M_C(A)$ is generated by the seminorms
\[
m\mapsto \|\kappa(c) m\|\midtext{and}\mapsto \|m\kappa(c)\|\midtext{for}c\in C.
\]
\end{enumerate}
\end{defn}

Often $C$ will be a nondegenerate $C^*$-subalgebra of $M(A)$ and $\kappa$ will be the inclusion map.

The following generalizes \cite[Proposition~A.5]{EKQR}:

\begin{lemma}
\label{C multiplier}
\

\begin{enumerate}
\item The $C$-strict topology on $M_C(A)$ is stronger than the relative strict topology from $M(A)$.

\item $M_C(A)$ is a $C^*$-subalgebra of $M(A)$, and multiplication and involution are separately $C$-strictly continuous on $M_C(A)$.

\item $M_C(A)$ is the $C$-strict completion of $A$;

\item $M_C(A)$ is an $M(C)$-subbimodule of $M(A)$.
\end{enumerate}
\end{lemma}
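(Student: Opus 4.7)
The plan is to lean on the nondegeneracy of $\kappa$, which by Cohen--Hewitt factorization gives $\kappa(C)A=A=A\kappa(C)$; every $a\in A$ thus factors both as $\kappa(c)b$ and as $b\kappa(c)$ for suitable $c\in C$ and $b\in A$. This factorization is the one gadget needed to convert $C$-strict information into ordinary strict or norm information, and every part of the lemma drops out of it together with the definitions.

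Parts (1), (2), and (4) should come out quickly. For (1), factoring $a=b\kappa(c)$ gives $\|a(m_i-m)\|\le\|b\|\,\|\kappa(c)(m_i-m)\|$, and $a=\kappa(c)b$ handles the other side, so $C$-strict convergence forces ordinary strict convergence in $M(A)$. For (2), the identities $\kappa(c)m^*=(m\kappa(c^*))^*$ and $\kappa(c)(mn)=(\kappa(c)m)n\in A$ close $M_C(A)$ under $*$ and under products; norm closure is immediate from $\|\kappa(c)m\|\le\|\kappa(c)\|\,\|m\|$; the only steps in the separate $C$-strict continuity of multiplication that are not immediate are the seminorms $\|(m_i-m)n\kappa(c)\|$ and $\|\kappa(c)m(n_i-n)\|$, both handled by factoring $n\kappa(c)\in A$ and $\kappa(c)m\in A$ via Cohen. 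Part (4) reduces to the observation that $\kappa(c)\bar\kappa(n)=\kappa(cn)\in\kappa(C)$ for $n\in M(C)$ and $c\in C$, after which $\kappa(c)\bar\kappa(n)m=\kappa(cn)m\in A$, and the symmetric computation handles the right action.

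For the density half of (3), I would take an approximate identity $(u_\lambda)$ of $C$: for $m\in M_C(A)$ the net $\kappa(u_\lambda)m$ lies in $A$, and
\[
\|\kappa(c)(\kappa(u_\lambda)m-m)\|=\|\kappa(cu_\lambda-c)m\|\longrightarrow 0,
\]
while
\[
\|(\kappa(u_\lambda)m-m)\kappa(c)\|=\|(\kappa(u_\lambda)-1)\cdot(m\kappa(c))\|\longrightarrow 0,
\]
since $m\kappa(c)\in A$ and $\kappa(u_\lambda)\to 1$ strictly in $M(A)$.

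The main obstacle is completeness of $M_C(A)$ with respect to bounded $C$-strict Cauchy nets, and that is where I would spend the time. Given such a net $(m_i)$, the plan is to manufacture the limit as a double centralizer of $A$. For $a\in A$, factoring $a=\kappa(c)b$ yields $m_ia=(m_i\kappa(c))b$, where $(m_i\kappa(c))$ is norm-Cauchy in $A$ by the $C$-strict Cauchy condition; hence $L(a):=\lim_i m_ia$ exists in norm, and likewise $R(a):=\lim_i am_i$ via $a=b\kappa(c)$. Boundedness of $(m_i)$ keeps $L$ and $R$ well defined and bounded independently of the choice of factorization, and the centralizer identity $aL(b)=R(a)b$ falls out of evaluating the norm-convergent expression $am_ib$ from either side. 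The resulting multiplier $m\in M(A)$ satisfies $\kappa(c)m=\lim_i\kappa(c)m_i\in A$ and $m\kappa(c)\in A$, so $m\in M_C(A)$, and by construction $m_i\to m$ $C$-strictly.
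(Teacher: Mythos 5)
Your proof is correct and, for parts (1), (2), (4) and the density half of (3), follows essentially the same route as the paper: Hewitt--Cohen factorization of elements of $A$ through $\kappa(C)$, plus an approximate identity of $C$ for density. The one genuine divergence is the completeness half of (3). The paper argues that a $C$-strictly Cauchy net is strictly Cauchy by part (1) and then invokes strict completeness of $M(A)$ to produce the limit, afterwards checking that the limit lies in $M_C(A)$ and that convergence upgrades to $C$-strict. You instead build the limit by hand as a double centralizer $(L,R)$, with $L(a)=\lim_i m_i a$ and $R(a)=\lim_i a m_i$ existing in norm via factorization. Your construction is more self-contained, since it does not rely on the (nontrivial, though standard) fact that $M(A)$ is complete for arbitrary strictly Cauchy nets; but as written it proves slightly less than the lemma asserts, because you restrict to \emph{bounded} $C$-strict Cauchy nets and use boundedness to control $L$ and $R$. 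This restriction is unnecessary: the limits $L(a)$ and $R(a)$ exist for an arbitrary $C$-strictly Cauchy net by your factorization argument, the pair $(L,R)$ satisfies $L(ab)=L(a)b$, $R(ab)=aR(b)$, and $aL(b)=R(a)b$, and a double centralizer of a $C^*$-algebra is automatically bounded (by the closed graph theorem), so $L$ and $R$ are bounded without any hypothesis on the net. Either add that observation or fall back on the paper's appeal to strict completeness of $M(A)$; with that repair your argument gives the full statement.
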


\begin{proof}
(1)
Let $m_i\to 0$ $C$-strictly in $M_C(A)$.
We must show that if $a\in A$ then $\|am_i\|$ and $\|m_ia\|$ both tend to $0$.
By the Hewitt-Cohen factorization theorem (see, for example, \cite[Proposition~2.33]{RW}) we can factor $a=a'\kappa(c)$
for some $a'\in A$ and $c\in C$, and then
\[
\|am_i\|=\|a'\kappa(c)m_i\|
\le \|a'\|\|\kappa(c)m_i\|
\to 0,
\]
and similarly for $m_i a$.

(2)
It follows straight from the definitions that $M_C(A)$ is a $C^*$-subalgebra.
Let $m_i\to 0$ $C$-strictly in $M_C(A)$, and let $n\in M(A)$. We must show that $m_in\to 0$ $C$-strictly in $M_C(A)$, and it will follow similarly that $nm_i\to 0$ $C$-strictly.
For $c\in C$ we have
\[
\|\kappa(c)m_in\|\le \|\kappa(c)m_i\|\|n\|\to 0
\]
because $m_i\to 0$ $C$-strictly, and
\[
\|m_in\kappa(c)\|\to 0
\]
because $n\kappa(c)\in A$ and $m_i\to 0$ strictly by (1).

It is even easier to verify that $m_i^*\to 0$ $C$-strictly.

(3)
We first show that $A$ is $C$-strictly dense in $M_C(A)$.
Let $m\in M_C(A)$, and let $\{e_i\}$ be an approximate identity for $C$.
We will show that $\kappa(e_i)m\to m$ $C$-strictly.
For $c\in C$ we have
\[
\|\kappa(c) (\kappa(e_i) m-m)\|
=\|\kappa(ce_i-c)\cdot m\|
\le \|ce_i-c\|\|m\|
\to 0,
\]
while
\[
\|(\kappa(e_i)m-m)\kappa(c)\|
=\|\kappa(e_i)m\kappa(c)-m\kappa(c)\|
\to 0
\]
because $\kappa(e_i)\to 0$ strictly in $M(A)$.

Now we verify the $C$-strict completeness.
Let $\{m_i\}$ be a $C$-strictly Cauchy net in $M_C(A)$.
Then it is also strictly Cauchy,
so converges strictly to some $m\in M(A)$ by strict completeness.
We will show that $m\in M_C(A)$ and $m_i\to m$ $C$-strictly.
Let $\epsilon>0$ and $c\in C$.
Choose $i_0$ such that
\[
\|\kappa(c) (m_i-m_j)\|\le \epsilon
\midtext{for all}i,j\ge i_0.
\]
Then for all $a\in A$ with $\|a\|\le 1$ we have $\|(m_j-m)a\|\to 0$,
so for all $i\ge i_0$ we have
\[
\|\kappa(c) (m_i-m)a\|
=\lim_j\|\kappa(c) (m_i-m_j)a\|
\le \limsup_j\|\kappa(c) (m_i-m_j)\|
\le \epsilon,
\]
and hence $\|\kappa(c) (m_i-m)\|\le \epsilon$.
It follows that $\|\kappa(c) (m_i-m)\|\to 0$.
Since $\kappa(c) m_i\in A$ for all $i$,
we have $\kappa(c) m\in A$.
Similarly, $\|(m_i-m)\kappa(c)\|\to 0$
and $m\kappa(c)\in A$.
Thus $m\in M_C(A)$,
and the above arguments then show that $m_i\to m$ $C$-strictly.

(4)
We have
\[
\bar\kappa(M(C))M_C(A)\kappa(C)\subset \bar\kappa(M(C))A\subset A
\]
and
\[
\kappa(C)\bar\kappa(M(C))M_C(A)\subset \kappa(C)M_C(A)\subset A,
\]
so $\bar\kappa(M(C))M_C(A)\subset M_C(A)$, and similarly for $M_C(A)\bar\kappa(M(C))$.
\end{proof}

The whole point of $C$-multipliers is to extend degenerate homomorphisms.
The following generalizes \cite[Proposition~A.6]{EKQR}:

\begin{lemma}
\label{extend C multiplier}
Let 
$\kappa:C\to M(A)$,
$\sigma:D\to M(B)$,
$\pi:A\to M_D(B)$,
and $\lambda:C\to M(\sigma(D))$
be homomorphisms,
with $\kappa$, $\sigma$, and $\lambda$ nondegenerate,
such that
\[
\pi(\kappa(c)a)=\lambda(c)\pi(a)\midtext{for}c\in C,a\in A.
\]
Then $\pi$ extends uniquely to a $C$-strict to $D$-strictly continuous homomorphism $\bar\pi:M_C(A)\to M_D(B)$.

Moreover, for $n\in M(C)$ and $m\in M_C(A)$ we have
\[
\bar\pi(\bar\kappa(n)m)=\bar\lambda(n)\bar\pi(m)
\midtext{and}
\bar\pi(m\bar\kappa(n))=\bar\pi(m)\bar\lambda(n).
\]
\end{lemma}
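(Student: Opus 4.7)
The plan is to construct $\bar\pi(m)$ for $m\in M_C(A)$ as the $D$-strict limit in $M_D(B)$ of the net $\pi(\kappa(e_i)m)$, where $\{e_i\}$ is a bounded approximate identity for $C$. Since $m\in M_C(A)$, each $\kappa(e_i)m$ lies in $A$, so $\pi(\kappa(e_i)m)$ is an honest element of $M_D(B)$, and the whole net is norm-bounded by $\|m\|$. Uniqueness of any such extension will be automatic from Lemma~\ref{C multiplier}(3), which exhibits $A$ as $C$-strictly dense in $M_C(A)$ and $M_D(B)$ as the $D$-strict completion of $B$.

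The heart of the argument is showing the net is $D$-strictly Cauchy. First I would take adjoints in the given identity to obtain the mirror identity $\pi(a\kappa(c))=\pi(a)\lambda(c)$ for $a\in A$ and $c\in C$. Then, for fixed $d\in D$, I would use the nondegeneracy of $\lambda:C\to M(\sigma(D))$ to approximate $\sigma(d)$ in $M(B)$-norm by finite sums $\sum_k \lambda(c_k)\sigma(d_k)$. On each such term the mirror identity gives
\[
\pi(\kappa(e_i)m)\lambda(c_k)\sigma(d_k)=\pi(\kappa(e_i)m\kappa(c_k))\sigma(d_k).
\]
Since $m\kappa(c_k)\in A$ and $\kappa$ is nondegenerate, $\kappa(e_i)m\kappa(c_k)\to m\kappa(c_k)$ in the norm of $A$, so $\pi(\kappa(e_i)m)\lambda(c_k)\sigma(d_k)\to \pi(m\kappa(c_k))\sigma(d_k)$ in the norm of $B$. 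Combined with the uniform bound $\|\pi(\kappa(e_i)m)\|\le\|m\|$, this upgrades to norm convergence of $\pi(\kappa(e_i)m)\sigma(d)$ in $B$. A symmetric calculation, using the original identity $\pi(\kappa(c)a)=\lambda(c)\pi(a)$ and approximating $\sigma(d)$ on the left, handles $\sigma(d)\pi(\kappa(e_i)m)$, giving the $D$-strict Cauchy property and hence a well-defined limit $\bar\pi(m)\in M_D(B)$.

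Once $\bar\pi$ is defined, checking that it is a $\ast$-homomorphism, that $m\mapsto\bar\pi(m)$ is $C$-strict to $D$-strictly continuous (replace $m$ by a $C$-strictly Cauchy net and repeat the above estimate with joint boundedness), and that it satisfies the two ``moreover'' equalities with $\bar\kappa(n)$ and $\bar\lambda(n)$ for $n\in M(C)$ is routine: each reduces to taking a limit of the corresponding identity already known on $A$, using nothing beyond the separate $D$-strict continuity of multiplication on $M_D(B)$ from Lemma~\ref{C multiplier}(2) and the strict continuity of $\bar\kappa$ and $\bar\lambda$. The main obstacle is that $C$-strict convergence on $M_C(A)$ gives no norm control at all; the essential trick is the rewriting that pushes $\kappa(c)$ all the way through $m$ until it meets an element of $A$, where the approximate identity finally does give norm convergence.
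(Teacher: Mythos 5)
Your proposal is correct and is essentially the paper's argument in concrete form: the key mechanism in both is to factor (or approximate) $\sigma(d)$ through $\lambda(C)\sigma(D)$ using nondegeneracy of $\lambda$ and then use the intertwining identity to move $\kappa(c)$ next to an element of $A$, where norm estimates apply; the paper packages this as $C$-strict to $D$-strict continuity of $\pi$ on $A$ followed by an abstract density/completeness extension, while you unfold the same estimate along the specific net $\kappa(e_i)m$. The only cosmetic caveat is that for the continuity of $\bar\pi$ you should argue via continuity at $0$ of a linear map (using $\bar\pi(\kappa(c)m)=\lambda(c)\bar\pi(m)$) rather than via boundedness of $C$-strictly Cauchy nets, which is not automatic.
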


Note that, in the above lemma, $M(\sigma(D))$ is identified with the idealizer of $\sigma(D)$ in $M(B)$ (and this is valid since $\sigma$ is nondegenerate), and
$\bar\lambda$ and $\bar\kappa$ denote the canonical extensions to the multiplier algebras (which exist by nondegeneracy).

\begin{proof}
We first show that $\pi$ is $C$-strict to $D$-strictly continuous.
Let $m_i\to 0$ $C$-strictly in $A$,
and let $d\in D$.
By the Hewitt-Cohen factorization theorem we can factor
$\sigma(d)=\sigma(d')\lambda(c)$
for some $d'\in D$ and $c\in C$, and then
\begin{align*}
\|\sigma(d)\pi(m_i)\|
&=\|\sigma(d')\lambda(c)\pi(m_i)\|
\\&=\|\sigma(d')\pi(\kappa(c)m_i)\|
\\&\le \|d'\|\|\kappa(c)m_i\|
\\&\to 0,
\end{align*}
and similarly for $\pi(m_i)\sigma(d)$.

Thus, by completeness there is a unique $C$-strict to $D$-strictly continuous linear extension $\bar\pi:M_C(A)\to M_D(B)$.
Since the algebraic operations on $M_C(A)$ and $M_D(B)$ are separately continuous for the $C$-strict and $D$-strict topologies, respectively,
$\bar\pi$ is a $*$-homomorphism.

The other part follows from continuity and density.
\end{proof}

\begin{remark}
(1)
In fact, the above extension $\bar\pi$ is unique as a homomorphism, but we will not need this.

(2)
The above lemma could be the starting point for an investigation of what one might call ``decorated $C^*$-algebras'' (where $A$ is ``decorated'' by $C$ and $B$ is decorated by $D$). However, we will not pursue this any further here.
\end{remark}

One of our applications of the above result is described by the following special case, in which we will actually need $\pi$ to take values in $B$ itself:

\begin{corollary}
\label{ideal}
Let $I$ be an ideal of $A$, 
and let $\pi:A\to B$ be a nondegenerate homomorphism.
Then the restriction $\pi_I$ of $\pi$ to $I$ extends uniquely to an $A$-strictly continuous homomorphism $\bar{\pi_I}:M_A(I)\to M_A(B)$.
\end{corollary}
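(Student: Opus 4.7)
The plan is to realize this as a direct application of Lemma~\ref{extend C multiplier}, with $C = D = A$, with the ``source'' algebra of the lemma being $I$ and the ``target'' algebra being $B$. So I need to identify the four homomorphisms required by the lemma and verify the hypotheses.

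First, I take $\kappa : A \to M(I)$ to be the canonical homomorphism arising from the fact that $I$ is an ideal of $A$ (so $A$ multiplies $I$ into itself on both sides). This is nondegenerate: any $x \in I$ lies in $\overline{AI}$, for instance by taking an approximate identity of $I$. Next, since the given $\pi : A \to B$ is nondegenerate and $B$ sits naturally inside $M(B)$, I take $\sigma := \pi : A \to M(B)$, which is then nondegenerate. The restriction $\pi_I : I \to B$ trivially lands in $M_A(B)$, because $\pi(I) \subset B$ and $B$ itself is contained in $M_A(B)$. Finally, I take $\lambda : A \to M(\sigma(A)) = M(\pi(A))$ to again be $\pi$, now viewed as a map into the idealizer of $\pi(A)$ in $M(B)$; it is nondegenerate because $\pi(A)$ is a $C^*$-algebra.

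The only compatibility condition to verify is
\[
\pi_I(\kappa(a)x) = \lambda(a)\,\pi_I(x) \quad \text{for } a \in A,\ x \in I,
\]
which reduces to $\pi(ax) = \pi(a)\pi(x)$ and so holds because $\pi$ is a homomorphism. Lemma~\ref{extend C multiplier} then produces a unique $A$-strict to $A$-strict continuous homomorphism $\bar{\pi_I} : M_A(I) \to M_A(B)$ extending $\pi_I$, which is exactly the desired conclusion.

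There is no real obstacle here: the statement is essentially bookkeeping to set up the correct instance of the preceding lemma. The only point requiring a moment's thought is the nondegeneracy of $\kappa : A \to M(I)$, and this follows immediately from the ideal structure of $I$ together with a standard approximate-identity argument.
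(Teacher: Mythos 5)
Your proof is correct and follows exactly the paper's own route: both apply Lemma~\ref{extend C multiplier} with $A,I$ in the roles of $C,A$, the canonical map $A\to M(I)$ as $\kappa$, $D=A$, $\sigma=\lambda=\pi$, and $\pi_I$ as the homomorphism to be extended, the key observation being $\pi_I(I)\subset B\subset M_A(B)$. Your additional checks (nondegeneracy of $\kappa$ via an approximate identity and the compatibility identity reducing to multiplicativity of $\pi$) are just the details the paper leaves implicit.
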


\begin{proof}
Apply Lemma~\ref{extend C multiplier} with 
$A,I$ playing the role of $C,A$, 
with the canonical homomorphism $\rho:A\to M(I)$ playing the role of $\kappa$,
with $D=A$ and $\sigma=\lambda=\pi$,
and with $\pi_I$ playing the role of $\pi$.
Since
\[
\pi_I(I)\subset B\subset M_A(B),
\]
the hypotheses of the lemma are satisfied.
\end{proof}

\subsection*{$C$-multiplier bimodules}

The preceding subsection was really only a prelude for the current one, where we generalize the $C$-multiplier bimodules of \cite[Section~1.4]{EKQR} for tensor products.


In addition to the preceding nondegenerate homomorphism $\kappa:C\to M(A)$,
suppose
that we also have a nondegenerate $A$-correspondence $X$, and let $\varphi_A:A\to \LL(X)$ be the associated homomorphism.
Then we can compose to get a nondegenerate homomorphism
\[
\varphi_C:=\bar{\varphi_A}\circ\kappa:C\to \LL(X),
\]
where $\bar{\varphi_A}$ denotes the canonical extension of $\varphi_A:A\to \LL(X)$ to $M(A)$,
so $X$ becomes a nondegenerate $C-A$ correspondence.
Similarly on the right, so $X$ becomes a nondegenerate $C$-bimodule (but \emph{not} a correspondence, because the $A$-valued inner product cannot be turned into a $C$-valued one).
Since $M(X)$ is an $M(A)$-correspondence, by composing with $\bar\kappa$ we get an $M(C)$-bimodule structure on $M(X)$ (but again, not a correspondence).


\begin{definition}
\label{relative multiplier}
Let $X$ be a nondegenerate $A$-correspondence,
and let $\kappa:C\to M(A)$ be a nondegenerate homomorphism.
\begin{enumerate}
\item A \emph{$C$-multiplier} of $X$ is a multiplier $m\in M(X)$ such that
\[
\kappa(C)\cdot m\cup m\cdot \kappa(C)\subset X,
\]
and $M_C(X)$ denotes the set of all $C$-multipliers of $X$.

\item 
The \emph{$C$-strict} topology on $M_C(X)$ is generated by the seminorms
\[
m\mapsto \|\kappa(c)\cdot m\|
\midtext{and}
m\mapsto \|m\cdot \kappa(c)\|
\midtext{for}c\in C.
\]
\end{enumerate}
\end{definition}




The $C$-multiplier algebra $M_C(A)$ from the preceding subsection is the special case of $M_C(X)$ where $X=A$ regarded as an $A$-correspondence in the usual way.

The following generalizes \cite[Lemma~1.40]{EKQR}:

\begin{lemma}
\label{strict correspondence}
\

\begin{enumerate}
\item The $C$-strict topology is stronger than the relative strict topology on $M_C(X)$.

\item $M_C(X)$ is an $M_C(A)$-correspondence with respect to the restrictions of the operations of the $M(A)$-correspondence $M(X)$, and the operations are separately $C$-strictly continuous.

\item $\KK(M_C(X))\subset M_C(\KK(X))$.

\item $M_C(X)$ is the $C$-strict completion of $X$.

\item $M_C(X)$ is an $M(C)$-subbimodule of $M(X)$.
\end{enumerate}
\end{lemma}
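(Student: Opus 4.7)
The plan is to mirror Lemma~\ref{C multiplier} (which handles the case $X=A$) throughout, relying on nondegeneracy of $\kappa$ and of $\varphi_A$ and on the Hewitt-Cohen factorization theorem at the crucial steps. For (1), I would dominate the strict seminorms on $M(X)$ by $C$-strict seminorms: given $a\in A$, factor $a=\kappa(c)a'$ so that $\|m\cdot a\|\le\|m\cdot\kappa(c)\|\|a'\|$, and for $T\in\KK(X)$ use density of $\varphi_A(A)\KK(X)$ in $\KK(X)$ to approximate $T\approx\sum\varphi_A(a_i)S_i$ and then factor each $a_i=\kappa(c_i)a_i'$, reducing $\|T\cdot m\|$ to the left $C$-strict seminorms.

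For (4), $C$-strict density of $X$ in $M_C(X)$ follows by cutting $m\in M_C(X)$ with an approximate identity $\{e_i\}$ of $C$: the element $\kappa(e_i)\cdot m$ lies in $X$ and converges $C$-strictly to $m$ by the same estimates as in the proof of Lemma~\ref{C multiplier}(3). $C$-strict completeness is then obtained from strict completeness of $M(X)$ by combining (1) with the same $\epsilon/\limsup$ argument. Part (5) is a direct verification using $\kappa(c)\bar\kappa(n)=\kappa(cn)\in\kappa(C)$ for $c\in C$ and $n\in M(C)$, together with the fact that $\bar\kappa(M(C))$ stabilizes $X$ on both sides.

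For (2), I would check that the $M(A)$-valued inner product and $M(A)$-bimodule actions on $M(X)$ descend to give $M_C(X)$ the structure of an $M_C(A)$-correspondence. The inner-product condition reduces to $\kappa(c)\<m_1,m_2\>=\<\kappa(c^*)\cdot m_1,m_2\>$, which lies in $A$ because $\kappa(c^*)\cdot m_1\in X$ and the mixed inner product between $X$ and $M(X)$ is $A$-valued; the right-sided analogue is handled by adjoining. Compatibility of the left and right $M_C(A)$-actions with the $C$-multiplier condition is straightforward, and separate $C$-strict continuity follows by combining separate strict continuity on $M(X)$ with~(1) and Lemma~\ref{C multiplier}.

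The main obstacle is (3). The natural map $\KK(M_C(X))\to\LL(X)=M(\KK(X))$ sends each rank-one $\theta^{M_C}_{m_1,m_2}$ to the restriction $\theta_{m_1,m_2}|_X$, using that $X$ is an $M_C(A)$-submodule of $M_C(X)$. To verify $\theta_{m_1,m_2}\in M_C(\KK(X))$, one computes $\varphi_C(c)\cdot\theta_{m_1,m_2}=\theta_{\kappa(c)\cdot m_1,m_2}$, where $\xi:=\kappa(c)\cdot m_1\in X$. This rank-one acts on $\zeta\in X$ by $\zeta\mapsto\xi\cdot m_2^*(\zeta)$, which factors as the composition of the adjointable map $m_2^*\in\LL(X,A)$ with the compact operator $a\mapsto\xi\cdot a$ in $\KK(A,X)$; the composition lies in $\KK(X)$, as required. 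The right-sided condition $\theta_{m_1,m_2}\cdot\varphi_C(c)=\theta_{m_1,\kappa(c^*)\cdot m_2}$ is handled symmetrically via adjoining.
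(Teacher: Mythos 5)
Your overall route is the paper's: everything is modeled on Lemma~\ref{C multiplier} via Cohen (Hewitt--Cohen) factorization, and your treatment of (3), (4), and (5) --- in particular realizing $\varphi_C(c)\theta_{m_1,m_2}$ as the composition of the compact operator $a\mapsto\xi\cdot a$ in $\KK(A,X)$ with the adjointable map $m_2^*$ --- is exactly the paper's argument. However, two steps fail as written. In (1), to control $\|T\cdot m_i\|$ for $T\in\KK(X)$ you factor $T\approx\sum_j\varphi_A(a_j)S_j$ and then $a_j=\kappa(c_j)a_j'$; but this produces $\varphi_C(c_j)\bigl(\varphi_A(a_j')S_j\, m_i\bigr)$, and the $C$-strict seminorms control $\|\varphi_C(c)\,m_i\|$, not $\|\varphi_C(c)(R\,m_i)\|$ for an intervening compact $R$. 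You need the $A$-factor adjacent to $m_i$: factor on the other side, $T=S\varphi_A(a)$ with $a=a'\kappa(c)$ (exactly, by Cohen factorization applied to the nondegenerate homomorphism $\varphi_A$ --- exactness also matters, since a $C$-strictly convergent net need not be norm bounded, so an approximation $T\approx T'$ does not transfer the estimate). Then $T\cdot m_i=S\cdot\bigl(a'\cdot(\kappa(c)\cdot m_i)\bigr)$ and $\|T\cdot m_i\|\le\|S\|\,\|a'\|\,\|\kappa(c)\cdot m_i\|\to0$.

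In (2), the identity $\kappa(c)\<m_1,m_2\>=\<\kappa(c^*)\cdot m_1,m_2\>$ is false: with the \emph{left} action one gets $\<\varphi_C(c^*)m_1,m_2\>=m_1^*\varphi_C(c)m_2=\<m_1,\kappa(c)\cdot m_2\>$, which is not $\kappa(c)m_1^*m_2$ in general, since $\varphi_C(c)$ is an operator on $X$ and does not pass through the $A$-valued inner product as a scalar. The correct identity uses the \emph{right} action, $\kappa(c)\<m_1,m_2\>=\<m_1\cdot\kappa(c^*),m_2\>$, and since $m_1\cdot\kappa(c^*)\in X$ and $\<X,M(X)\>\subset A$, the desired conclusion still follows. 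Finally, the point you dismiss as ``straightforward'' --- that $n\cdot m\in M_C(X)$ for $n\in M_C(A)$ and $m\in M_C(X)$ --- is the one place in (2) needing an extra Cohen factorization: write $\kappa(c)n=a\kappa(c')$ with $a\in A$, $c'\in C$, so that $\kappa(c)\cdot(n\cdot m)=a\cdot(\kappa(c')\cdot m)\in X$. With these local repairs your proof coincides with the paper's.
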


\begin{proof}
Many of the arguments are quite similar to those in the proof of Lemma~\ref{C multiplier}.

(1)
If $m_i\to 0$ $C$-strictly in $M_C(X)$, we must show that if $T\in \KK(X)$ and $a\in A$ then both $\|Tm_i\|$ and $\|m_i\cdot a\|$ tend to $0$, and the argument is similar to Lemma~\ref{C multiplier}.

(2)
If $n\in M_C(A)$ and $m\in M_C(X)$ then $n\cdot m$ is in $M(X)$ because $M(X)$ is an $M(A)$-correspondence; we must show that it is in $M_C(X)$.
For $c\in C$, note that $\kappa(c) n\in A$, so because $\kappa:C\to M(A)$ is nondegenerate we can factor $\kappa(c) n=a\kappa(c')$ with $a\in A$ and $c'\in C$, and then we have
\begin{align*}
\kappa(c)\cdot (n\cdot m)
&=(\kappa(c) n)\cdot m
=(a\kappa(c'))\cdot m
=a\cdot (\kappa(c')\cdot m)\in X
\end{align*}
because $\kappa(c')\cdot m\in X$.

On the other hand, we have
\[
(n\cdot m)\cdot \kappa(c)=n\cdot (m\cdot \kappa(c))\in X
\]
because $m\cdot \kappa(c)\in X$ and $n\in M(A)$.

The $M(A)$-valued inner product on $M(X)$ restricts to one on
$M_C(X)$; we must show that for $m,n\in M_C(X)$ the inner product $\<m,n\>$ is in $M_C(A)$. For $c\in C$ we have
\begin{align*}
\<m,n\>\kappa(c)
&=\<m,n\cdot \kappa(c)\>
\in A,
\end{align*}
because $n\cdot \kappa(c)\in X$ and $\<M(X),X\>\subset A$, and similarly $\kappa(c)\<m,n\>\in A$.

The separate continuity is similar to Lemma~\ref{C multiplier}.
For example, if $m_i\to 0$ $C$-strictly in $M_C(X)$, $n\in M_C(X)$, and $c\in C$, then
\[
\|\<m_i,n\>\kappa(c)\|
=\|\<m_i,n\cdot \kappa(c)\>\|
\to 0
\]
because $n\cdot \kappa(c)\in X$, $m_i\to 0$ strictly by (1), and the operations in $M(X)$ are separately strictly continuous,
while
\[
\|\kappa(c)\<m_i,n\>\|
=\|\<m_i\cdot \kappa(c^*),n\>\|
\to 0
\]
because $\|m_i\cdot \kappa(c^*)\|\to 0$.

(3)
Here the only issue is keeping straight where everything is: we have
\[
M_C(X)\subset M(X)\subset \LL(A,X),
\]
and for $m,n\in M_C(X)$ the rank-one operator $\theta_{m,n}=mn^*$
is therefore an element of $\LL(X)=M(\KK(X))$.
We need to know that $mn^*$ is a $C$-multiplier of $\KK(X)$. So, take $c\in C$.
Since $m\in M_C(X)$ we have 
\[
\phi_C(c)m\in X=\KK(A,X),
\]
so
\[
\phi_C(c)mn^*\in \KK(X).
\]
Similarly, $\phi_C(c^*)n\in \KK(X,A)$,
so $n^*\Phi_C(c)\in \KK(A,X)$, and hence
\[
mn^*\Phi_C(c)\in \KK(X).
\]
Thus we have shown that $mn^*\in M_C(\KK(X))$.

(4) and (5)
are  similar to Lemma~\ref{C multiplier}.
For example,
\[
\bigl(\bar\kappa(M(C))\cdot M_C(X)\bigr)\cdot \kappa(C)
\subset \bar\kappa(M(C))\cdot X\subset X,
\]
because $C$ acts nondegenerately on $X$,
while
\[
\kappa(C)\cdot \bigl(\bar\kappa(M(C))\cdot M_C(X)\bigr)
\subset \kappa(C)\cdot M_C(X) \subset X.
\qedhere
\]
\end{proof}

\begin{remark}\label{KM in MK}
It will be useful to explicitly note the following special case of (3) above:
taking $C=A$ and $\kappa=\id_A$ we get
\[
\KK(M_A(X))\subset M_A(\KK(X)).
\]
Note that
\[
M_A(X)=\{m\in M(X):\phi_A(a)m\in X\text{ for all }a\in A\},
\]
so that, unlike $M(X)$, the set $M_A(X)$ depends upon the left module map $\phi_A$.
\end{remark}

The following generalizes \cite[Proposition~1.42]{EKQR}:

\begin{proposition}
\label{extend}
Let $X$ and $Y$ be nondegenerate correspondences over $A$ and $B$, respectively,
let $\kappa:C\to M(A)$ and $\sigma:D\to M(B)$ be nondegenerate homomorphisms,
let $(\psi,\pi):X\to M_D(Y)$ be a correspondence homomorphism,
and let $\lambda:C\to M(\sigma(D))$ be a nondegenerate homomorphism,
such that
\[
\pi(\kappa(c)a)=\lambda(c)\pi(a)\midtext{for}c\in C,a\in A.
\]
Then $(\psi,\pi)$ extends uniquely to a $C$-strict to $D$-strictly continuous correspondence homomorphism $(\bar\psi,\bar\pi)$ from the $M_C(A)$-correspondence $M_C(X)$ to the $M_D(B)$-correspondence $M_D(Y)$.
\end{proposition}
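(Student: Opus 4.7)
The plan is to extend $(\psi,\pi)$ in two stages. The first stage is immediate: Lemma~\ref{extend C multiplier}, applied with $\kappa$, $\sigma$, $\lambda$, and $\pi$ in their present roles (the compatibility condition $\pi(\kappa(c)a)=\lambda(c)\pi(a)$ is exactly the hypothesis of that lemma), produces the unique $C$-strict to $D$-strictly continuous $*$-homomorphism $\bar\pi:M_C(A)\to M_D(B)$ extending $\pi$. It then remains to extend $\psi$ to $\bar\psi:M_C(X)\to M_D(Y)$; once this is done, the correspondence-homomorphism axioms for $(\bar\psi,\bar\pi)$ and the uniqueness of the pair will follow from separate continuity together with the $C$-strict density of $X$ in $M_C(X)$.

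The key step, and the main obstacle, is proving that $\psi$ is $C$-strict to $D$-strictly continuous on $X$. I first need the module-level analogues of the compatibility condition: for $\xi\in X$ and $c\in C$,
\[
\psi(\kappa(c)\cdot\xi)=\lambda(c)\cdot\psi(\xi)\quad\text{and}\quad \psi(\xi\cdot\kappa(c))=\psi(\xi)\cdot\lambda(c).
\]
Both follow by factoring $\xi=a\cdot\xi'$ or $\xi=\xi'\cdot a$ in $X$ via Hewitt--Cohen and invoking $\pi(\kappa(c)a)=\lambda(c)\pi(a)$ together with its adjoint form $\pi(a\kappa(c))=\pi(a)\lambda(c)$. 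Now given a net $\xi_i\to 0$ $C$-strictly in $X$ and $d\in D$, use nondegeneracy of $\lambda:C\to M(\sigma(D))$ to factor $\sigma(d)=\sigma(d_1)\lambda(c)$; then
\[
\sigma(d)\cdot\psi(\xi_i)=\sigma(d_1)\cdot\lambda(c)\cdot\psi(\xi_i)=\sigma(d_1)\cdot\psi(\kappa(c)\cdot\xi_i),
\]
whose norm is at most $\|d_1\|\cdot\|\kappa(c)\cdot\xi_i\|\to 0$. The symmetric factorization $\sigma(d)=\lambda(c')\sigma(d_1')$ handles $\|\psi(\xi_i)\cdot\sigma(d)\|\to 0$. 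This is precisely where the nondegeneracy hypothesis on $\lambda$ is essential.

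Given this continuity, and since $M_D(Y)$ is $D$-strictly complete while $X$ is $C$-strictly dense in $M_C(X)$ by Lemma~\ref{strict correspondence}, $\psi$ extends uniquely to a $C$-strict to $D$-strictly continuous linear map $\bar\psi:M_C(X)\to M_D(Y)$. The two correspondence-homomorphism identities
\[
\<\bar\psi(m),\bar\psi(n)\>_{M_D(B)}=\bar\pi(\<m,n\>_{M_C(A)}),\qquad \bar\psi(a\cdot m)=\bar\pi(a)\cdot\bar\psi(m)
\]
for $m,n\in M_C(X)$ and $a\in M_C(A)$ are then verified by approximating each argument $C$-strictly by elements of $X$ and $A$ and invoking separate $C$-strict and $D$-strict continuity of the inner product and module actions (Lemmas~\ref{C multiplier} and~\ref{strict correspondence}). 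Uniqueness of $(\bar\psi,\bar\pi)$ as a $C$-strict to $D$-strict continuous correspondence homomorphism extending $(\psi,\pi)$ is then automatic from the density.
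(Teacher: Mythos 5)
Your proof is correct and takes essentially the same route as the paper's: the paper also obtains $\bar\pi$ from Lemma~\ref{extend C multiplier}, then notes that one shows $\psi$ is $C$-strict to $D$-strictly continuous (by the same Hewitt--Cohen factorization of $\sigma(d)$ through $\lambda(C)$), and concludes by completeness and separate continuity of the operations. The paper leaves this as a sketch; you have simply filled in the module-level compatibility identities and the two-sided continuity estimate explicitly.
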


\begin{proof}
We take $\bar\pi:M_C(A)\to M_D(B)$ as in Lemma~\ref{extend C multiplier}.
The unique existence of $\bar\psi$ is proved similarly to Lemma~\ref{extend C multiplier}:
one shows first that $\psi$ is $C$-strict to $D$-strictly continuous,
so that by completeness there is a unique continuous linear extension,
which must be a correspondence homomorphism by
separate continuity of the operations.
\end{proof}

\begin{remark}
Again, the above proposition could be the start of an investigation of what one might call ``decorated correspondences'', where $C$ decorates the $A$-correspondence $X$, etc.
\end{remark}

Here is the Toeplitz version of Proposition~\ref{extend}:

\begin{corollary}
\label{toeplitz}
Let $X$ be a nondegenerate $A$-correspondence,
let $\kappa:C\to M(A)$ and $\sigma:D\to M(B)$ be nondegenerate homomorphisms,
let $(\psi,\pi):X\to B$ be a Toeplitz representation,
and let $\lambda:C\to M(\sigma(D))$ be a nondegenerate homomorphism,
such that
\[
\pi(\kappa(c)a)=\lambda(c)\pi(a)\midtext{for}c\in C,a\in A.
\]
Then $(\psi,\pi)$ extends uniquely to a
$C$-strict to $D$-strictly continuous Toeplitz representation
$(\bar\psi,\bar\pi)$
from the $M_C(A)$-correspondence $M_C(X)$
to $M_D(B)$.
\end{corollary}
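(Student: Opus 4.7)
The plan is to deduce this directly from Proposition~\ref{extend} by viewing $B$ as a nondegenerate $B$-correspondence over itself.

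First I would take $Y := B$ with its canonical $B$-correspondence structure (left and right multiplication, inner product $\langle b_1, b_2\rangle = b_1^*b_2$). This $Y$ is nondegenerate as a $B$-correspondence, and unwinding the definitions shows that $M_D(Y)$ coincides with $M_D(B) \subseteq M(B)$ as introduced in the preceding subsection. The key observation, recorded in the appendix just before the subsection on $C$-multipliers, is that a Toeplitz representation $(\psi,\pi)\colon X \to B$ is nothing but a correspondence homomorphism from the $A$-correspondence $X$ to the $B$-correspondence $Y=B$ whose image happens to land in $B$ rather than just $M(B)$; in particular it takes values in $B \subseteq M_D(B) = M_D(Y)$.

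Next I would invoke Proposition~\ref{extend} with this choice of $Y$ and with the given $\kappa, \sigma, \lambda$ and $(\psi,\pi)$. All hypotheses are immediate: $X$ and $Y$ are nondegenerate correspondences, $\kappa,\sigma,\lambda$ are nondegenerate by assumption, and the compatibility condition $\pi(\kappa(c)a) = \lambda(c)\pi(a)$ is precisely what the Proposition requires. The Proposition then produces a unique $C$-strict to $D$-strictly continuous correspondence homomorphism
\[
(\bar\psi,\bar\pi)\colon M_C(X) \longrightarrow M_D(Y) = M_D(B)
\]
extending $(\psi,\pi)$.

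To finish, I would observe that $(\bar\psi,\bar\pi)$ automatically satisfies the three Toeplitz relations: the inner-product and left-module axioms come from being a correspondence homomorphism, and the right-module relation $\bar\psi(m\cdot n) = \bar\psi(m)\bar\pi(n)$ is the automatic consequence noted immediately after the definition of correspondence homomorphism in the appendix. Uniqueness then follows from the uniqueness asserted in Proposition~\ref{extend}, or equivalently from $C$-strict density of $X$ in $M_C(X)$ (Lemma~\ref{strict correspondence}(4)) combined with the required continuity. The only point requiring a moment's thought is the first one, namely the identification of a Toeplitz representation as a correspondence homomorphism into $M(Y)$ for $Y=B$ so that Proposition~\ref{extend} is applicable; the remainder is bookkeeping.
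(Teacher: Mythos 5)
Your proposal is correct and follows exactly the paper's own argument: the paper's proof is the single line ``Apply Proposition~\ref{extend} with $Y=B$, regarded as a $B$-correspondence in the usual way,'' and your additional remarks merely spell out the identifications that the paper leaves implicit.
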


\begin{proof}
Apply Proposition~\ref{extend} with $Y=B$, regarded as a $B$-correspondence in the usual way.
\end{proof}


\begin{corollary}
\label{C=A toeplitz}
Let $X$ be a nondegenerate $A$-correspondence,
and let $(\psi,\pi):X\to B$ be a Toeplitz representation
with $\pi$ nondegenerate.
Then:
\begin{enumerate}
\item $(\psi,\pi)$ extends uniquely to an
$A$-strictly continuous Toeplitz representation
$(\bar\psi,\bar\pi)$
from the $M(A)$-correspondence $M_A(X)$
to $M_A(B)$.

\item $\psi^{(1)}:\KK(X)\to B$
extends uniquely to an $A$-strictly continuous homomorphism
$\bar{\psi^{(1)}}:M_A(\KK(X))\to M_A(B)$.

Moreover, we have:
\begin{enumerate}
\item $\bar{\psi^{(1)}}(n\cdot m)=\bar\pi(n)\bar{\psi^{(1)}}(m)$
and $\bar{\psi^{(1)}}(m\cdot n)=\bar{\psi^{(1)}}(m)\bar\pi(n)$
for all $n\in M(A)$ and $m\in M_A(\KK(X))$;

\item $\bar{\psi^{(1)}}(mn^*)=\bar\psi(m)\bar\psi(n)^*$
for all $m,n\in M_A(X)$.
\end{enumerate}
\end{enumerate}
\end{corollary}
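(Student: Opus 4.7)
The plan is to derive both parts as special cases of the general extension results established in the appendix, namely Corollary~\ref{toeplitz} for part~(1) and Lemma~\ref{extend C multiplier} for part~(2), each instantiated with $C=D=A$ and $\sigma=\lambda=\pi$. A useful preliminary observation is that since $\pi$ is nondegenerate and $\pi(A)\subset B$ sits as a subset of the ideal $B\subset M(B)$, we automatically have $\pi(a)m,m\pi(a)\in B$ for every $m\in M(B)$ and $a\in A$; hence $M_A(B)=M(B)$ under the present setup, reconciling the codomains appearing in the statement with the general framework.

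For part~(1), I would take $\kappa=\id_A$. Nondegeneracy of $\id_A$, $\sigma=\pi$, and $\lambda=\pi:A\to M(\pi(A))$ is immediate (the last from Cohen factorization $A\cdot A=A$). The compatibility hypothesis $\pi(\kappa(c)a)=\lambda(c)\pi(a)$ reduces to $\pi(ca)=\pi(c)\pi(a)$, which is just the homomorphism property of~$\pi$. Corollary~\ref{toeplitz} then delivers an $A$-strict to $A$-strictly continuous Toeplitz representation $(\bar\psi,\bar\pi):M_A(X)\to M_A(B)$.

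For part~(2), I would take $\kappa=\varphi_A:A\to M(\KK(X))=\LL(X)$, which is nondegenerate because $X$ is a nondegenerate $A$-correspondence. The compatibility condition of Lemma~\ref{extend C multiplier} applied to $\psi^{(1)}:\KK(X)\to B$ becomes
\[
\psi^{(1)}(\varphi_A(a)T)=\pi(a)\psi^{(1)}(T)\midtext{for}a\in A,\ T\in\KK(X),
\]
which I would verify on rank-one operators via $\psi^{(1)}(\theta_{a\cdot\xi,\eta})=\psi(a\cdot\xi)\psi(\eta)^*=\pi(a)\psi(\xi)\psi(\eta)^*=\pi(a)\psi^{(1)}(\theta_{\xi,\eta})$ and then extend by linearity and norm-continuity. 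The lemma then yields the desired $A$-strictly continuous extension $\bar{\psi^{(1)}}:M_A(\KK(X))\to M_A(B)$.

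Property~(a) is an immediate transcription of the ``moreover'' clause of Lemma~\ref{extend C multiplier} in the setting above. For property~(b), the identity $\bar{\psi^{(1)}}(mn^*)=\bar\psi(m)\bar\psi(n)^*$ holds for $m,n\in X$ by the very definition of $\psi^{(1)}$ on rank-one operators, and I would extend to $m,n\in M_A(X)$ via $A$-strict density: an approximate identity $(e_\lambda)$ of $A$ produces $\varphi_A(e_\lambda)m\to m$ and $\varphi_A(e_\lambda)n\to n$ $A$-strictly in $M_A(X)$, and both sides of~(b) are separately $A$-strictly continuous in $m$ and in $n$ once one knows that $m\mapsto mn^*$ is $A$-strictly continuous from $M_A(X)$ into $M_A(\KK(X))$. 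The main technical obstacle will be this last continuity check; the estimate $\|\varphi_A(a)mn^*\|\le\|\varphi_A(a)m\|\|n\|$ handles one side directly, while the other requires rewriting $n^*\varphi_A(a)=(\varphi_A(a^*)n)^*$ with $\varphi_A(a^*)n\in X$ and reducing to a rank-one estimate. This bookkeeping, combined with property~(a) and Lemma~\ref{strict correspondence}, constitutes the bulk of the work.
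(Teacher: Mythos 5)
Your proposal is correct and follows essentially the same route as the paper: part (1) is Corollary~\ref{toeplitz} with $C=D=A$, and part (2) is Lemma~\ref{extend C multiplier} with $C=A$ and $A$ replaced by $\KK(X)$ via $\kappa=\varphi_A$, using the covariance identity $\psi^{(1)}(\varphi_A(a)T)=\pi(a)\psi^{(1)}(T)$, with (a) coming from the ``moreover'' clause and (b) from $A$-strict density and separate continuity. Your explicit checks that $\lambda=\pi$ is the right coefficient map and that $mn^*$ lands in $M_A(\KK(X))$ are exactly the points the paper handles (the latter by appeal to Lemma~\ref{strict correspondence}(2)), so there is nothing substantively different.
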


\begin{proof}
(1)
Apply Corollary~\ref{toeplitz} with $C=D=A$, $\kappa=\lambda=\id_A$, and $\sigma=\pi$.

(2)
First we need to justify the meaning of (b): for $m,n\in M_A(X)$ we need to know that $mn^*\in M_A(\KK(X))$.
But this follows easily from work we have already done: since $X$ is a $\KK(X)-B$ imprimitivity bimodule, arguments similar to those in the proof of Lemma~\ref{strict correspondence} (2) show that the left-hand inner product on $M_A(X)$ takes values in $M_A(\KK(X))$.

To prove (2), we apply Corollary~\ref{extend C multiplier}
with $C,A,B,D$ replaced by $A,\KK(X),B,A$, respectively;
recall that for $a\in A$ and $T\in \KK(X)$ we have
\[
\psi^{(1)}(\varphi(a)T)=\pi(a)\psi^{(1)}(T).
\]

Now everything in (2) except for (b) follows from Corollary~\ref{extend C multiplier}.
Then (b) follows from separate continuity of the operations, density of $A$, $X$, and $\KK(X)$ in $M(A)$, $M_A(X)$, and $M_A(\KK(X))$, respectively, and the corresponding property of $\psi^{(1)}$.
\end{proof}

\begin{example}
\label{degenerate compact}
We should justify our need for (2) in above corollary: if $(k_X,k_A)$ is the canonical Toeplitz representation of the correspondence $X=\HH(E)$ associated to a topological graph $E$, then $k_X^{(1)}:\KK(X)\to C^*(E)$ can be degenerate.
For example, let $E$ be the directed graph in Example~\ref{degenerate}. Then
\[
k_X^{(1)}(\KK(X))=\smtx{*&0\\0&0},
\]
so
\[
k_X^{(1)}(\KK(X))C^*(E)=\smtx{*&0\\0&0}M_2=\smtx{*&*\\0&0},
\]
and so $k_X^{(1)}$ is degenerate.
\end{example}

\begin{remark}
Although we will have no use for it in the main body of this paper, we point out that all of the above can be done with an ``asymmetric'' version of the $C$-multiplier bimodules; namely allowing correspondences with different left and right coefficient algebras. It takes no extra effort to establish the more general concepts and results.
Since we feel that they may be useful elsewhere,
we record here the asymmetric versions:

\begin{definition}
Let $X$ be a nondegenerate $A-B$ correspondence,
and let $\kappa_C:C\to M(A)$ and $\kappa_D:D\to M(B)$ be nondegenerate homomorphisms.
\begin{enumerate}
\item A \emph{$C,D$-multiplier} of $X$ is a multiplier $m\in M(X)$ such that
\[
\kappa_C(C)\cdot m\cup m\cdot \kappa_D(D)\subset X,
\]
and $M_{C,D}(X)$ denotes the set of all $C,D$-multipliers of $X$.

\item 
The \emph{$C,D$-strict} topology on $M_{C,D}(X)$ is generated by the seminorms
\[
m\mapsto \|\kappa_C(c)\cdot m\|
\midtext{and}
m\mapsto \|m\cdot \kappa_D(d)\|
\midtext{for}c\in C,d\in D.
\]
\end{enumerate}
\end{definition}

\begin{lemma}
\

\begin{enumerate}
\item The $C,D$-strict topology is stronger than the relative strict topology on $M_{C,D}(X)$.

\item $M_{C,D}(X)$ is an $M_C(A)-M_D(B)$ correspondence with respect to the restrictions of the operations of the $M(A)-M(B)$ correspondence $M(X)$, and the operations are separately continuous for the $C,D$-strict, $C$-strict, and $D$-strict topologies.

\item $M_{C,D}(X)$ is the $C,D$-strict completion of $X$.

\item $M_{C,D}(X)$ is an $M(C)-M(D)$ subbimodule of $M(X)$.
\end{enumerate}
\end{lemma}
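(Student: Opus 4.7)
The plan is to follow the proof of Lemma~\ref{strict correspondence} nearly verbatim, making the straightforward adjustments needed for the asymmetric situation. The key observation is that in Lemma~\ref{strict correspondence} the left and right coefficient structures are largely treated independently, so separating them into $(C,\kappa_C)$ on the left and $(D,\kappa_D)$ on the right introduces only cosmetic changes. One should dispatch the four items in order, writing $\varphi_C := \bar\varphi_A\circ\kappa_C:C\to\LL(X)$ (which is nondegenerate since both $\varphi_A$ and $\kappa_C$ are) so that $\varphi_C(C)\cdot X = X$, and using the analogous factorization $\KK(X)\varphi_A(\kappa_C(C))= \KK(X)$ and $\kappa_D(D)B = B$ whenever Hewitt--Cohen is invoked.

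For (1), to see that $m_i\to 0$ in the $C,D$-strict topology implies $\|T\cdot m_i\|\to 0$ for $T\in \KK(X)$ and $\|m_i\cdot b\|\to 0$ for $b\in B$, factor $T = T'\varphi_A(\kappa_C(c))$ and $b = \kappa_D(d)b'$, and bound $\|Tm_i\|\le \|T'\|\|\kappa_C(c)\cdot m_i\|$ and $\|m_i\cdot b\|\le \|m_i\cdot\kappa_D(d)\|\|b'\|$. For (2), the module multiplications are checked exactly as in Lemma~\ref{strict correspondence} (2): given $n\in M_C(A)$ and $m\in M_{C,D}(X)$, factor $\kappa_C(c)n = a\kappa_C(c')$ to show $\kappa_C(c)\cdot(n\cdot m)\in X$, and observe $(n\cdot m)\cdot\kappa_D(d) = n\cdot(m\cdot\kappa_D(d))\in X$; the inner product lands in $M_D(B)$ because $\langle m,m'\rangle\kappa_D(d) = \langle m,m'\cdot\kappa_D(d)\rangle\in \langle M(X),X\rangle\subset B$, and symmetrically on the left; separate continuity of all operations is then a routine duplication of the argument already given.

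For (3), fix approximate identities $\{e_i\}\subset C$ and $\{f_j\}\subset D$; then $\kappa_C(e_i)\cdot m\cdot\kappa_D(f_j)\in X$ and converges to $m\in M_{C,D}(X)$ in the $C,D$-strict topology by a direct estimate using nondegeneracy of $\kappa_C,\kappa_D$ on the coefficient actions, giving density; completeness is proved as in Lemma~\ref{C multiplier} (3) by first invoking strict completeness from (1) to get a candidate limit in $M(X)$ and then upgrading to $C,D$-strict convergence via an $\epsilon$-argument applied to $\|\kappa_C(c)\cdot(m_i-m)a\|$ and $\|a\cdot(m_i - m)\cdot\kappa_D(d)\|$ for $a$ in the unit ball of $B$. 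Finally (4) is immediate from the inclusions
\[
\bar{\kappa_C}(M(C))\cdot M_{C,D}(X)\cdot\kappa_D(D)\subset \bar{\kappa_C}(M(C))\cdot X\subset X
\]
and the three symmetric variants, using that $\kappa_C,\kappa_D$ act nondegenerately.

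No step here poses a genuine obstacle; the only mildly subtle point is the strict-completeness argument in (3), where one must be careful about the order of approximations and which seminorms are controlled at each stage, but this is exactly the same bookkeeping already done in Lemmas~\ref{C multiplier} and \ref{strict correspondence}. Since the arguments are entirely parallel, the proof can reasonably be left to the reader or compressed to a short paragraph saying ``proceed as in Lemma~\ref{strict correspondence}, replacing $\kappa$ by $\kappa_C$ on the left and by $\kappa_D$ on the right.''
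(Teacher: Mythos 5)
Your proposal is correct and coincides with the paper's treatment: the paper records this lemma without proof, remarking only that the asymmetric versions ``take no extra effort,'' which is precisely the adaptation of Lemma~\ref{strict correspondence} (and, behind it, Lemma~\ref{C multiplier}) that you spell out. The one small slip is in your completeness argument, where the seminorm $\|(m_i-m)\cdot \kappa_D(d)\|$ should be estimated by testing against $b$ in the unit ball of $B$ acting on the \emph{right} (there is no left $B$-action on $X$), but this is a notational matter and does not affect the argument.
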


\begin{proposition}
Let $X$ and $Y$ be nondegenerate $A-B$ and $P-Q$ correspondences, respectively,
let $\kappa_C:C\to M(A)$,
$\kappa_D:D\to M(B)$,
$\kappa_R:R\to M(P)$, and
$\kappa_S:S\to M(Q)$ be nondegenerate homomorphisms,
let ${}_\pi\psi_\theta:{}_AX_B\to {}_{M_R(P)}M_{R,S}(Y)_{M_S(Q)}$
be a correspondence homomorphism,
and let $\lambda_C:C\to M(\kappa_R(R))$
and $\lambda_D:D\to M(\kappa_S(S))$ be nondegenerate homomorphisms,
such that for $c\in C$, $a\in A$, $b\in B$, and $d\in D$ we have
\[
\pi(\kappa_C(c)a)=\lambda_C(c)\pi(a)
\midtext{and}
\theta(\kappa_D(d)b)=\lambda_D(d)\theta(b).
\]
Then ${}_\pi\psi_\theta$ extends uniquely to a
$C,D$-strict to $R,S$-strictly continuous correspondence homomorphism
${}_{\bar\pi}\bar\psi_{\bar\theta}$ from the
$M_C(A)-M_D(B)$ correspondence $M_{C,D}(X)$ to the
$M_R(P)-M_S(Q)$ correspondence $M_{R,S}(Y)$.
\end{proposition}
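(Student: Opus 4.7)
The plan is to mimic the proof of the symmetric case, Proposition~\ref{extend}, keeping careful track of the asymmetry between left and right coefficient data. First I would apply Lemma~\ref{extend C multiplier} twice, once to $\pi\colon A\to M_R(P)$ (with $\kappa_C,\kappa_R,\lambda_C$ playing the roles of $\kappa,\sigma,\lambda$) and once to $\theta\colon B\to M_S(Q)$ (with $\kappa_D,\kappa_S,\lambda_D$), to obtain the extended coefficient homomorphisms $\bar\pi\colon M_C(A)\to M_R(P)$ and $\bar\theta\colon M_D(B)\to M_S(Q)$, each of which is strictly continuous in the appropriate sense. These are already the required coefficient maps for $\bar\psi$.

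The heart of the proof is to show that $\psi\colon X\to M_{R,S}(Y)$ is $C,D$-strict to $R,S$-strictly continuous, so that it extends uniquely by $C,D$-strict completeness of $M_{C,D}(X)$. The crucial identities needed are
\[
\lambda_C(c)\cdot\psi(\xi)=\psi(\kappa_C(c)\cdot\xi)\midtext{and}\psi(\xi)\cdot\lambda_D(d)=\psi(\xi\cdot\kappa_D(d))\]
for $c\in C$, $d\in D$, $\xi\in X$. To verify the first, factor $\xi=a\cdot\eta$ with $a\in A$, $\eta\in X$ (Hewitt--Cohen, using nondegeneracy of $X$ as a left $A$-module), then compute
\[
\lambda_C(c)\cdot\psi(\xi)=\lambda_C(c)\cdot\pi(a)\cdot\psi(\eta)=\pi(\kappa_C(c)a)\cdot\psi(\eta)=\psi(\kappa_C(c)\cdot\xi),
\]
using the compatibility hypothesis; the second identity is analogous. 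Now suppose $\xi_i\to 0$ $C,D$-strictly in $X$; for $r\in R$ use Hewitt--Cohen in $\kappa_R(R)$ (via nondegeneracy of $\lambda_C$) to factor $\kappa_R(r)=\kappa_R(r')\lambda_C(c)$, and estimate
\[
\|\kappa_R(r)\cdot\psi(\xi_i)\|
=\|\kappa_R(r')\cdot\psi(\kappa_C(c)\cdot\xi_i)\|
\le\|r'\|\,\|\kappa_C(c)\cdot\xi_i\|\to 0,
\]
where the inequality uses that $\psi$ is a contraction (since $\|\psi(\xi)\|^2=\|\theta(\<\xi,\xi\>)\|\le\|\xi\|^2$) and that $\kappa_R(r')\cdot\psi(\kappa_C(c)\cdot\xi_i)$ lies in $Y$. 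The right-side seminorms are handled the same way, so $\psi$ has the asserted continuity.

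By the asymmetric $C,D$-strict completeness (stated in the lemma just preceding this proposition) and density of $X$ in $M_{C,D}(X)$, $\psi$ extends uniquely to a $C,D$-strict to $R,S$-strictly continuous linear map $\bar\psi\colon M_{C,D}(X)\to M_{R,S}(Y)$. It remains to check that $(\bar\psi,\bar\pi,\bar\theta)$ is a correspondence homomorphism, i.e.\ that it preserves the inner product and the two module actions. Each of these identities already holds on the dense set where the original triple is defined, and each side of each identity is separately continuous in the relevant strict topologies (by the asymmetric analogue of Lemma~\ref{strict correspondence}), so they extend by continuity.

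The only nontrivial point is the continuity of $\psi$ at step~2, and even that is a direct translation of the argument in Lemma~\ref{extend C multiplier}; the remaining bookkeeping is routine because we have four Hewitt--Cohen factorizations (one each for $\lambda_C$, $\lambda_D$, and the left/right actions on $X$), and the compatibility hypotheses have been chosen precisely so that these fit together. Uniqueness of $\bar\psi$ follows from density of $X$ together with $C,D$-strict continuity.
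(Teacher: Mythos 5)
Your proposal is correct and follows essentially the same route as the paper: the paper states this asymmetric proposition without proof (remarking that it takes no extra effort beyond the symmetric case), and its proof of the symmetric Proposition~\ref{extend} is exactly your outline — extend the coefficient maps via Lemma~\ref{extend C multiplier}, prove strict-to-strict continuity of $\psi$ by the Hewitt--Cohen factorization together with the compatibility identities, extend by completeness, and recover the correspondence-homomorphism identities from separate strict continuity and density. Your write-up in fact supplies more of the bookkeeping than the paper does.
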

\end{remark}
\end{appendix}


\begin{thebibliography}{EKQR06}

\bibitem[ADR00]{AR}
C.~Anantharaman-Delaroche and J.~Renault, \emph{Amenable groupoids},
  Monographies de L'Enseignement Math\'ematique [Monographs of L'Enseignement
  Math\'ematique], vol.~36, L'Enseignement Math\'ematique, Geneva, 2000, With a
  foreword by Georges Skandalis and Appendix B by E. Germain.

\bibitem[Bau93]{Bau} 
G.~Baumslag, \emph{Topics in combinatorial group theory}, Lectures in
  Mathematics ETH Z\"urich, Birkh\"auser Verlag, Basel, 1993.


\bibitem[BH99]{BH} 
M. R.~Bridson, A.~Haefliger,   \emph{Metric spaces of non-positive curvature}, Grundlehren der Mathematischen Wissenschaften, 319. Springer-Verlag, Berlin, 1999.  

\bibitem[BO08]{BO} N. P.~ Brown, N.~Ozawa,   \emph{$C^*$-algebras and finite-dimensional approximations}, Graduate Studies in Mathematics, 88. American Mathematical Society, Providence, RI, 2008.

\bibitem[EKQR06]{EKQR}
S.~Echterhoff, S.~Kaliszewski, J.~Quigg, and I.~Raeburn, \emph{{A Categorical
  Approach to Imprimitivity Theorems for $C^*$-Dynamical Systems}}, vol. 180, Mem.
  Amer. Math. Soc., no. 850, American Mathematical Society, Providence, RI,
  2006.
\bibitem[FLR00]{FLR}  
N.~J.~Fowler, M.~Laca, I.~Raeburn,   \emph{The $C^*$-algebras of infinite graphs},
  Proc. Amer. Math. Soc.  \textbf{128}  (2000),  2319--2327. 
\bibitem[GT01]{GT}
J.~L. Gross and T.~W. Tucker, \emph{Topological graph theory}, Dover
  Publications Inc., Mineola, NY, 2001, Reprint of the 1987 original [Wiley,
  New York] with a new preface and supplementary bibliography.

\bibitem[HN08]{HN}
G.~Hao and C.-K. Ng, \emph{Crossed products of {$C^*$}-correspondences by
  amenable group actions}, J. Math. Anal. Appl. \textbf{345} (2008), no.~2,
  702--707.

\bibitem[HRW05]{aHRW}
A.~an Huef, I.~Raeburn, and D.~P. Williams, \emph{{A symmetric imprimitivity
  theorem for commuting proper actions}}, Canad. J. Math. \textbf{57} (2005),
  983--1011.

\bibitem[Hus94]{husemoller}
D.~Husemoller, \emph{Fibre bundles}, third ed., Graduate Texts in Mathematics,
  vol.~20, Springer-Verlag, New York, 1994.

\bibitem[KQR97]{KQR3}
S.~Kaliszewski, J.~Quigg, and I.~Raeburn, \emph{Duality of restriction and induction for $C^*$-coactions}, Trans. Amer. Math. Soc. \textbf{349} (1997), 2085--2113.

\bibitem[KQR08]{KQR}
\bysame, \emph{Proper actions, fixed-point
  algebras and naturality in nonabelian duality}, J. Funct. Anal. \textbf{254}
  (2008), 2949--2968.

\bibitem[KQR10]{KQR2}
\bysame, \emph{Skew products and coactions for topological graphs}, preprint,
  2010.

\bibitem[Kas88]{K}
G.~G. Kasparov, \emph{Equivariant {$KK$}-theory and the {N}ovikov conjecture},
  Invent. Math. \textbf{91} (1988), no.~1, 147--201.

\bibitem[Kat02]{Ka5}
T.~Katsura, \emph{Continuous graphs and crossed products of {C}untz algebras},
  S\=urikaisekikenky\=usho K\=oky\=uroku (2002), no.~1291, 73--83, Recent
  aspects of $C{^{*}}$-algebras (Japanese) (Kyoto, 2002).

\bibitem[Kat04]{Ka1}
\bysame, \emph{A class of {$C\sp \ast$}-algebras generalizing both graph
  algebras and homeomorphism {$C\sp \ast$}-algebras. {I}. {F}undamental
  results}, Trans. Amer. Math. Soc. \textbf{356} (2004), no.~11, 4287--4322.

\bibitem[Kat06]{Ka3}
\bysame, \emph{A class of {$C^*$}-algebras generalizing both graph algebras and
  homeomorphism {$C^*$}-algebras. {III}. {I}deal structures}, Ergodic Theory
  Dynam. Systems \textbf{26} (2006), no.~6, 1805--1854.

\bibitem[Kat08]{Ka4}
\bysame, \emph{A class of {$C^*$}-algebras generalizing both graph algebras and
  homeomorphism {$C^*$}-algebras. {IV}. {P}ure infiniteness}, J. Funct. Anal.
  \textbf{254} (2008), no.~5, 1161--1187.

\bibitem[KK97]{KK}
A.~Kishimoto and A.~Kumjian, \emph{Crossed products of {C}untz algebras by
  quasi-free automorphisms}, Operator algebras and their applications
  ({W}aterloo, {ON}, 1994/1995), Fields Inst. Commun., vol.~13, Amer. Math.
  Soc., Providence, RI, 1997, pp.~173--192.

\bibitem[KP99]{KP}
A.~Kumjian and D.~Pask, \emph{{$C^*$-algebras of directed graphs and group
  actions}}, Ergod. Th. and Dynam. Sys. \textbf{19} (1999), 1503--1519.
  
  \bibitem[LS77]{LS} R.~C. Lyndon, P.~E. Schupp,  {\em Combinatorial group theory}, Ergebnisse der Mathematik und ihrer Grenzgebiete, Band 89. Springer-Verlag, Berlin-New York, 1977.
  
\bibitem[Mas91]{massey}
W.~S. Massey, \emph{A basic course in algebraic topology}, Graduate Texts in
  Mathematics, vol. 127, Springer-Verlag, New York, 1991.

\bibitem[OP78]{OP1}
D.~Olesen and G.~K. Pedersen, \emph{Applications of the {C}onnes spectrum to
  {$C^{\ast} $}-dynamical systems}, J. Funct. Anal. \textbf{30} (1978), no.~2,
  179--197.
\bibitem[OP80]{OP2}
\bysame, \emph{Applications of the {C}onnes spectrum to {$C^{\ast} $}-dynamical
  systems. {II}}, J. Funct. Anal. \textbf{36} (1980), no.~1, 18--32.

\bibitem[Pal61]{P}
R.~S. Palais, \emph{On the existence of slices for actions of non-compact Lie groups}, Ann. of Math. (2) \textbf{73} (1961), 295--323.

\bibitem[QR95]{twlan}
J.~C. Quigg and I.~Raeburn, \emph{{Induced $C^*$-algebras and Landstad duality
  for twisted coactions}}, Trans. Amer. Math. Soc. \textbf{347} (1995),
  2885--2915.

\bibitem[Qui92]{Q:landstad}
J.~C. Quigg, \emph{{Landstad duality for $C^*$-coactions}}, Math. Scand.
  \textbf{71} (1992), 277--294.

\bibitem[Rae05]{R}
I.~Raeburn, \emph{Graph algebras}, CBMS Regional Conference Series in
  Mathematics, vol. 103, Published for the Conference Board of the Mathematical
  Sciences, Washington, DC, 2005.

\bibitem[Rie90]{Ri1}
M.~A. Rieffel, \emph{{Proper actions of groups on $C\sp *$-algebras}}, Mappings
  of operator algebras (Philadelphia, PA, 1988) (Boston, MA), Birkh\"auser
  Boston, 1990.

\bibitem[Rie04]{Ri2}
\bysame, \emph{{Integrable and proper actions on $C\sp *$-algebras, and
  square-integrable representations of groups}}, Expo. Math. \textbf{22}
  (2004), 1--53.

\bibitem[RW98]{RW}
I.~Raeburn and D.~P. Williams, \emph{{Morita equivalence and continuous-trace
  $C^*$-algebras}}, Math. Surveys and Monographs, vol.~60, American
  Mathematical Society, Providence, RI, 1998.

\end{thebibliography}
\newcommand{\etalchar}[1]{$^{#1}$}
\providecommand{\bysame}{\leavevmode\hbox to3em{\hrulefill}\thinspace}
\providecommand{\MR}{\relax\ifhmode\unskip\space\fi MR }
\providecommand{\MRhref}[2]{%
  \href{http://www.ams.org/mathscinet-getitem?mr=#1}{#2}
}
\providecommand{\href}[2]{#2}

\end{document}